\documentclass[11pt,letterpaper]{amsart}
\usepackage{amssymb,
                       amsmath,
                               txfonts,
                                      mathrsfs,
                                              titletoc,
                                                    appendix,
                                                    relsize}
\usepackage{graphicx}
\usepackage{xcolor}
\usepackage{subcaption}





\usepackage[alphabetic]{amsrefs}

 \newtheorem{thm}{Theorem}[section]
 \newtheorem{cor}[thm]{Corollary}
   
  \newtheorem{cjt*}{Conjecture}
 \newtheorem{lem}[thm]{Lemma}

 \newtheorem{defn}[thm]{Definition}
 \newtheorem{rem}[thm]{Remark}
 \numberwithin{equation}{section}
\newtheorem{lem*}{Lemma}
\newtheorem{cor*}{Corollary}

\newenvironment{pfT2}{\medskip \noindent
{\it Proof of Theorem \ref{T2}.}}{\hfill $\square$\par
} 

\newenvironment{pfT1}{\medskip \noindent
{\it Proof of Theorem \ref{T1}.}}{\hfill $\square$\par
}

\def\p#1{\partial #1}

\def\la{\lambda}
\def\La{\Lambda}

\def\th{\theta}

\def\R{\Bbb{R}}

\begin{document}

\title[Detect duality obstruction of calibrations]{Detect duality obstruction of calibrations \\in smooth category}%
\author{Yongsheng Zhang}
\address{Academy for Multidisciplinary Studies, Capital Normal University, Beijing 100048, P. R. China}
\email{yongsheng.chang@gmail.com}
\date{\today}
\keywords{Calibration, comass control, duality obstruction, realization problem} 
 \subjclass{Primary~53C38}
 %

\begin{abstract}
This paper consists of three parts:
(a) exhibit a new gluing result which can dramatically simplify extensions of calibration pairs;
(b) observe that every Lawlor cone can support coflat calibrations singular only at the origin;
(c)  show that there exist many Lawlor cones which cannot support any smooth calibrations.
As an application,  we extend our previous work on detecting duality obstruction of calibrations in the smooth category.
\end{abstract}
\maketitle
\section{Introduction}\label{S1} 
           In \cite{Z12, z} the author developed a very ``soft" framework about extending calibration pairs
           which play effective roles in detecting duality obstruction and realization problem about  singularities (to be explained below).
           By combining these methods  the framework has been carried forward to attack several  long-term standing conjectures regarding area-minimizing currents, e.g. \cite{L1, L2, L3}.

             The reason for the calibration method being so useful is due to the fundamental theorem of calibrated geometry. 
          Detailed explanations about  the theorem will be given in \S 2.
            A special situation of   the theorem 
            says that 
            any calibrated closed smooth submanifold is mass-minimizing in its homology class.
         So, the {\bf duality question} in the smooth category is the following:
         \begin{quote}
         $(\star)$ If a closed smooth submanifold $M$ in some Riemannian manifold $(X, g)$ is already mass-minimizing in its homology class,
         then can we have some smooth calibration form to calibrate $M$?
         \end{quote}
         
               In  Example 3 of \cite{Z12}
          we observe  some intriguing duality obstruction phenomenon for the first time.
        It leads us to some smooth closed hypersurface $M$ of dimension $m\geq 7$ in some Riemannian manifold $(X,g)$ of dimension $m+1$
          which minimizes mass in its $\R$-homology class $[M]$
          but cannot be calibrated by any smooth calibration form.
        Also in \cite{Z12},  
                   examples of high codimension with the duality obstruction
                       can be gained.
                       Among other constructions, 
                         \begin{quote}
          $(\ast)$
                for simplicity,   consider the Cartesian product  $(X\times \mathbb S^\ell, g\oplus g_0)$ 
        where  $(\mathbb S^\ell, g_0)$ is a standard unit sphere  with $\ell\geq 1$.
         Take $q\in (\mathbb S^\ell, g_0)$.
         Then
 the submanifold $M\times \{q\}$ 
          gives us a homologically mass-minimizing smooth submanifold of codimension $\ell+1$ 
          and it cannot be calibrated by any smooth calibration form.
           \end{quote}
          
          This obstruction in $(\ast)$ comes from some delicate balance between geometry and topology.
          And it has been further pointed out on page 431 of \cite{FH} that our obstruction even exists in local around the submanifold in the category of continuous calibration forms.
          To distinguish this local obstruction $(\ast)$ from global ones, 
          the author  used the following example to explain to others  since 2020
      and  such kind of illustration can trace back  at least  to \cite{FH}.
          Let $Y=S^1\times S^{2020}$ and choose a closed embedded curve $\gamma_2$ to be a representative of twice the generator ${\tt e}$ of $H_1(Y; \mathbb Z)$.
          Then as in \cite{Z12} we can conformally change background metric such that $\gamma_2$ becomes  the unique length minimizer in its homology class.
          As a result, any length minimizer $\gamma_1$ in class ${\tt e}$ cannot be calibrated by any smooth calibration
          since otherwise the integral current $2\gamma_1$ is also calibrated and thus length minimizing in $[\gamma_2]$ as well which contradicts with the uniqueness in the above.
               Here the obstruction comes from global in the sense that one cannot apply the idea in a sufficiently small neighborhood of $\gamma_2$ for the duality obstruction phenomenon.


               In this paper, we extend our (localizable) duality obstruction for question $(\star)$ 
                                     to genuine high codimension situations beyond that in $(\ast)$.
                                     The idea is to find minimizing cones which support coflat calibrations singular only at the origin but cannot be calibrated by any smooth calibration form,
                                     and then apply our extension result for calibration pairs in \cite{Z12} (Theorem \ref{conecal} in below).
                                     
                                     To be able to include more general local models which we can start with, 
                                     we establish the following two results.
                                     
                                     \begin{thm}\label{T1}
                                     Let $L$ be a closed embedded submanifold in $\mathbb S^N\subset \mathbb R^{N+1}$.
                                     If the cone $C(L)=\{tp\, : \, t\in(0,\infty),\, p\in L\}$ can be shown area-minimizing by Lawlor's criterion,
                                     then $C(L)$ supports  coflat calibrations singular only at the origin.
                                     \end{thm}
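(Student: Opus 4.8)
The plan is to extract a calibration directly from the geometry that Lawlor's criterion supplies, and then to use the conical symmetry of $C(L)$ to raise its regularity to ``smooth away from the origin''. Recall that when Lawlor's criterion applies to $C(L)$ it is verified through the \emph{normal wedge} construction: over each ray of $C(L)$ one erects a wedge in the normal directions, and the curvature hypothesis is exactly what forces these wedges, taken with maximal opening, to fit together and fill $\mathbb{R}^{N+1}\setminus\{0\}$, with a well-defined projection $\pi\colon \mathbb{R}^{N+1}\setminus\{0\}\to C(L)\setminus\{0\}$ that restricts to the identity on $C(L)$ and --- this being precisely the inequality that yields minimality --- does not increase $d$-dimensional area, where $d:=1+\dim L$. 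Everything here is equivariant for the dilation action $D_\lambda(x)=\lambda x$, since $C(L)$, its normal bundle and the wedges all are. First I would set $\phi:=\pi^{*}\big(dV_{C(L)\setminus\{0\}}\big)$, the pullback of the (closed, top-degree) volume form of the smooth oriented $d$-manifold $C(L)\setminus\{0\}$. Then $\phi$ is closed (pullback of a closed form), has comass $\|\phi\|^{*}\le 1$ (since $\pi$ is area-nonincreasing), equals $dV_{C(L)}$ on $C(L)$ (since $\pi$ is the identity there), and has dilation-homogeneous coefficients (since $\pi$ is $D_\lambda$-equivariant); hence $\phi$ calibrates $C(L)\setminus\{0\}$. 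By homogeneity $\phi$ is smooth wherever $\pi$ is, so its singular set is contained in $\{0\}$ (modulo the cut locus dealt with next); and it cannot be smoothed at $0$ in the genuinely singular cases, because a continuous dilation-homogeneous $d$-form must have constant coefficients, which would force $C(L)$ to be calibrated by a constant form.

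Two points then need care. First, $\pi$ may fail to be smooth along a lower-dimensional cut locus where wedges over distinct rays meet; I would remove this by modifying $\phi$ by an exact form supported off $C(L)$, exploiting the strict inequality $\|\phi\|^{*}<1$ that holds away from $C(L)$ (Lawlor's projection strictly contracts there) to preserve the comass bound --- a localized dimension-reduction/gluing argument of exactly the flavor of the paper's extension result for calibration pairs (Theorem \ref{conecal}); alternatively one works from the start on a dilation-invariant neighborhood of $C(L)$ and globalizes at the end by the same gluing. Second, the coflat condition $\delta\phi=0$: once $\phi$ has been normalized to be dilation-homogeneous, to equal $dV_{C(L)}$ on $C(L)$, and to satisfy $\|\phi\|^{*}<1$ strictly off $C(L)$, I would correct it to a coclosed form by solving $\delta d\eta=\delta\phi$ for a $(d-1)$-form $\eta$ chosen so that $d\eta$ vanishes to high order along $C(L)$ (leaving the calibration of $C(L)$ untouched) and is small in comass near $C(L)$ (so the bound $\|\phi-d\eta\|^{*}\le 1$ survives by the slack); the solvability, and the room to localize $\eta$, are ample because $\mathbb{R}^{N+1}\setminus\{0\}$ is open. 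For the classical Lawlor cones --- cones over products of spheres --- this last step is unnecessary: there the calibrating form is explicit and one checks $\delta\phi=0$ directly.

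I expect the main obstacle to be precisely this passage from Lawlor's leafwise, pointwise comparison to a genuinely \emph{global} closed form that simultaneously has comass $\le 1$ everywhere, is smooth off the origin, and is coflat. Lawlor's machinery natively delivers only an area-nonincreasing projection, and only off a possible cut locus; converting it into a bona fide coflat calibration on all of $\mathbb{R}^{N+1}\setminus\{0\}$ is where the quantitative slack $\|\phi\|^{*}<1$ away from $C(L)$ and the paper's gluing techniques must be deployed carefully. Once that is in hand, pairing $\phi$ with the area-minimizing cone $C(L)$ produces the desired coflat calibration singular only at the origin, which is Theorem \ref{T1}.
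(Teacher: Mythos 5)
Your approach is genuinely different from the paper's, and it has two concrete problems.

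First, you have misread what ``coflat calibration'' means in this paper. By Definition~\ref{coflat} a coflat calibration is simply a closed form of comass at most one defined on $X-S_\phi$ for some closed singular set $S_\phi$ of Hausdorff $m$-measure zero; it has nothing to do with $\delta\phi=0$. Your entire second paragraph, solving $\delta d\eta=\delta\phi$ to make the form coclosed, is therefore addressing a constraint that does not exist, and can be deleted. What must actually be proved is only that the calibration form can be taken \emph{smooth} away from the single point~$o$.

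Second, the real obstacle is not a ``cut locus between wedges'' in the interior, and it is not fixed by subtracting an exact form with small comass off $C(L)$. In Lawlor's framework the candidate calibration is already the explicit closed form $\phi=d(h\psi)$ inside the angular neighborhood $U(\theta_0)$ of $C(L)$, and it vanishes identically outside $U(\theta_0)$; equivalently, your $\pi^*(dV_{C(L)})$ is $d(h_0\psi)$ inside $U(\theta_0)$ and zero outside, where $h_0$ is the extremal solution of the equality case of \eqref{tineq}. The failure of smoothness occurs exactly at $\partial U(\theta_0)$, because the extremal $h_0$ does not meet the $t$-axis smoothly (the paper notes its second-order behavior near $t=0$ is unstable and the touching at $\tan\theta_0$ is not smooth). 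The paper's proof is entirely ODE-theoretic: since every Lawlor cone detected by the F- or c-control in fact satisfies the \emph{strict} inequality $\theta_0<R(L)/2$, there is slack. One chooses $a_{\min}<a<a_{\max}$, starts with $h(t)=1-at^2$ near $t=0$, continues it as a solution of the inequality \eqref{tineq} (strict in the interior), and deforms it near where it would hit the axis so that the resulting $h_2$ is smoothly tangent to the $t$-axis at some $\tan\theta_2$ still less than $\tan(R(L)/2)$. Then $\phi=d(h_2\psi)$ is smooth on $\mathbb{R}^{N+1}\setminus\{o\}$ (it vanishes to all orders across $\partial U(\theta_2)$ and extends by zero), has comass $\le 1$ everywhere, and calibrates $C(L)$. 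Your ``modify by an exact form exploiting strict comass $<1$ off $C(L)$'' is qualitatively the right instinct (there is slack and it must be spent to smooth the boundary), but the paper spends the slack as a strict angular inequality $\theta_2<R(L)/2$ in Lawlor's ODE variable, not as a comass margin to be absorbed by a gluing of forms; no invocation of Theorem~\ref{conecal} is needed inside the proof of Theorem~\ref{T1}.

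Minor further cautions: the claim $\|\phi\|^*<1$ strictly off $C(L)$ is not automatic for Lawlor's projection (it can be area-preserving along some directions off the cone), and the claim that $\pi^*(dV)$ is closed is delicate precisely because $\pi$ is not smooth across $\partial U(\theta_0)$, which is the very issue in question.
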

                                     
                                     \begin{rem}
                            When a closed submanifold $L\subset \mathbb S^N$ is not totally geodesic,
                              the cone $C(L)$ has only one singular point at the origin
                              and it is called a regular cone.
                                     \end{rem}
                                  \begin{rem}
                                    We say a current supports a calibration if it is calibrated by the calibration, see Definition \ref{calibratable}.
                                     \end{rem}

                                     \begin{thm}\label{T2}
                                     Let $L_i$ be a closed embedded  minimal submanifold in $\mathbb S^{N_i}$ for $i=1,\cdots, n$.
                                     Suppose that $L_1$ is of codimension one. 
                                     Then the cone $C(L_1\times\cdots\times L_n)$ over their minimal product $L_1\times\cdots\times L_n$ supports no smooth calibration.
                                     \end{thm}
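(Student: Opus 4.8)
The plan is to argue by contradiction: assume a smooth calibration $d$-form $\phi$, with $d=1+\sum_i\dim L_i$, is defined on an open neighbourhood $U$ of $\operatorname{spt} C$, where $C:=C(L_1\times\cdots\times L_n)$; in particular $0\in U$. The first step is soft and moves the problem to the vertex. Since $C$ is a cone, the oriented tangent $d$-plane $\vec T_\xi$ to $C$ at a regular point $t\xi$ — with $\xi$ ranging over the link $L:=L_1\times\cdots\times L_n\subset\mathbb S^N$, $N+1=\sum_i(N_i+1)$ — is independent of the scale $t>0$, while $\phi$ is continuous and of pointwise comass $\le 1$. Letting $t\downarrow 0$ in the calibrating identity $\langle\phi(t\xi),\vec T_\xi\rangle=1$ gives $\langle\phi_0,\vec T_\xi\rangle=1$ for every such $\xi$, where $\phi_0:=\phi(0)$, and $\operatorname{comass}(\phi_0)\le 1$. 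Thus $\phi_0$ is a constant-coefficient (hence closed) calibration of the whole minimal cone $C$, and it suffices to show that $C$ admits no constant-coefficient calibration.

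The second step brings in the codimension-one hypothesis. Write $\xi=(\xi_1,\dots,\xi_n)\in L$ with $\xi_i\in L_i\subset\mathbb S^{N_i}$, and let $\nu_{L_1}$ be a local unit normal field of $L_1$ in $\mathbb S^{N_1}$. Then $\nu(\xi):=(\nu_{L_1}(\xi_1),0,\dots,0)$ is a unit vector normal to $C$ along the entire ray through $\xi$: it is orthogonal to the radial direction, to $T_{\xi_1}L_1$, and to each $T_{\xi_i}L_i$ with $i\ge 2$; moreover $\nu(\xi)$ depends only on $\xi_1$. Because $\operatorname{comass}(\phi_0)=1$ and $\vec T_\xi$ maximizes $\langle\phi_0,\cdot\rangle$ over unit simple $d$-vectors, rotating a unit vector of $\vec T_\xi$ toward $\nu(\xi)$ and differentiating the resulting inequality at the starting plane forces the first-order term to vanish, i.e. $\iota_{\nu(\xi)}\phi_0$ annihilates $\Lambda^{d-1}\vec T_\xi$, for every $\xi\in L$.

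The last step is to contradict $\operatorname{comass}(\phi_0)\le 1$ using the global geometry of $L_1$. Being a closed embedded minimal hypersurface of $\mathbb S^{N_1}$ which is not totally geodesic (if it were, $C$ would be flat in the $\mathbb R^{N_1+1}$-factor and the statement degenerates), $L_1$ separates $\mathbb S^{N_1}$ and its Gauss map $\xi_1\mapsto\nu_{L_1}(\xi_1)$ is non-constant. The idea is to play the pointwise relations of Step 2 off against one another along $L_1$: comparing two rays through points of $L$ that differ only in the $L_1$-coordinate, the tangent planes $\vec T_\xi,\vec T_{\xi'}$ share a common subspace $Q$ of dimension $d-2$ while the normals $\nu(\xi),\nu(\xi')$ differ, so contracting $\phi_0$ with $Q$ produces a constant-coefficient $2$-form of comass $\le 1$ on the (low-dimensional) orthogonal complement $Q^\perp$ that is simultaneously forced to calibrate both residual $2$-planes and to be killed by the two distinct normal directions; the point is that this rigidity cannot persist as $\xi_1$ is moved around the whole closed minimal hypersurface $L_1$. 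I expect this to be the genuine difficulty: Steps 1–2 are formal, whereas turning the infinitesimal rigidity into a real violation of the comass bound is where the hypotheses that $L_1$ is closed and minimal (and codimension one) must be used in earnest. When $n=1$ the argument collapses to a classical observation: a constant-coefficient calibration of an $N_1$-dimensional cone in $\mathbb R^{N_1+1}$ is, via the Hodge star, dual to a fixed unit vector and can calibrate only one hyperplane, so $C(L_1)$ would be a hyperplane and $L_1$ totally geodesic, a contradiction; the general case amounts to propagating this hypersurface rigidity through the product.
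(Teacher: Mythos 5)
Your first step --- sending $t\downarrow 0$ to replace $\phi$ by the constant--coefficient calibration $\phi_0$ at the origin --- coincides with the paper's opening move. Your second step is also correct, and is in fact the same first--order variation that underlies the paper's Lemma~\ref{L2}: perturbing one vector of $\vec T_\xi$ toward a normal direction and using that $\phi_0(\vec T_\xi)=1$ is a maximum forces $\iota_{\nu(\xi)}\phi_0$ to annihilate $\Lambda^{d-1}\vec T_\xi$. But the proof stops at the third step, which you explicitly flag as ``the genuine difficulty,'' and that step is not a technicality to be filled in --- it is where the theorem is actually proved. As written, nothing after the phrase ``the idea is to play the pointwise relations off against one another'' is an argument; the plane--comparison scheme (shared $(d-2)$--subspace $Q$, contracting to get a $2$--form on $Q^\perp$) is sketched without any mechanism for deriving a contradiction, and no global theorem is invoked that could convert infinitesimal rigidity at each $\xi_1$ into an impossibility over all of $L_1$. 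Even your $n=1$ fallback only yields that $L_1$ must be totally geodesic, which is not itself a contradiction with the stated hypotheses.

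The missing ingredients are a reduction and a rigidity theorem. The paper fixes $x_i\in L_i$ and contracts $\phi_o$ once and for all with the tangent $k_i$--vectors $\xi_2,\dots,\xi_n$ at that single point, producing a constant--coefficient $(k_1+1)$--form $\Phi_o$, which it then pulls back to the first factor $\R^{N_1+1}$. Lemma~\ref{L2} shows $\mathcal P^*(\Phi_o)$ pairs with the cone tangent $\xi(x)\wedge x$ to the constant $\pm\lambda_1$ for every $x\in L_1$. Since $k_1+1=N_1$, the form $\mathcal P^*(\Phi_o)$ is of codegree one, hence simple, hence Hodge--dual to a fixed vector; the constancy of the evaluation then places the Gauss image $L_1^*\subset\mathbb S^{N_1}$ in an open hemisphere. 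That is exactly what Simons' theorem on closed minimal hypersurfaces of spheres (Theorem~\ref{JS}) forbids. So the route you should have been looking for is: contract at a single point in the other factors (so the normal $\nu_{L_1}(\xi_1)$ becomes the only moving datum), reduce to a simple form on $\R^{N_1+1}$, and hit it with Simons' theorem --- a global rigidity result, not a calculation with comass.
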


                             In conjunction with recent configuration result for area-minimizing cones in \cite{Z25},
                             these two theorems in particular imply the following.
                                     
                             \begin{cor}\label{C1}
                             Given any closed embedded minimal hypersurface $L$ in $\mathbb S^N\subset \mathbb R^{N+1}$,
                             let $L^{\times n}$ denote the minimal product of $n$ copies of $L$.
                             Then, when $n$ is sufficiently large,  the cone $C(L^{\times n})$ over  $L^{\times n}$ supports coflat calibrations singular only at the origin but no smooth calibration forms.
                             \end{cor}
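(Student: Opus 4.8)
The plan is to derive Corollary \ref{C1} by combining Theorems \ref{T1} and \ref{T2} with the configuration result of \cite{Z25}, the latter supplying exactly the area-minimality that Theorem \ref{T1} takes as a hypothesis; no new geometric input is needed beyond these three ingredients.

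First I would recall the minimal product construction. If $L\subset \mathbb S^N\subset \mathbb R^{N+1}$ is a closed embedded minimal submanifold, then after rescaling the $n$ Euclidean factors by the appropriate dimension-dependent weights, the $n$-fold product $L\times\cdots\times L$ embeds as a closed embedded minimal submanifold $L^{\times n}$ of the unit sphere $\mathbb S^{n(N+1)-1}$, and $C(L^{\times n})$ is its metric cone; for $n\ge 2$ this is a regular cone, since a nontrivial Riemannian product of positive-dimensional factors carries a nonzero second fundamental form inside a sphere and so is never totally geodesic. This is classical and I would simply cite it. Note that $L$ being a hypersurface makes each factor of codimension one.

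With this in hand, the assertion that $C(L^{\times n})$ supports no smooth calibration form is immediate from Theorem \ref{T2} applied with $L_1=\cdots=L_n=L$, since the hypothesis that $L_1$ has codimension one is satisfied for every $n\ge 1$. For the positive half I would invoke the configuration result of \cite{Z25}, which identifies, for $n$ large, the cone over the $n$-fold minimal product as one to which Lawlor's criterion applies — informally, the relevant normalized volume/curvature quantity attached to $L^{\times n}$ crosses Lawlor's threshold once $n$ is large enough. Granting this, Theorem \ref{T1} applies to $C(L^{\times n})$ and produces coflat calibrations singular only at the origin. Choosing $n$ beyond the threshold provided by \cite{Z25} then yields both conclusions at once, which proves the corollary.

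The only genuinely nontrivial step is ``for $n$ sufficiently large, $C(L^{\times n})$ is a Lawlor cone,'' which is imported wholesale from \cite{Z25}; everything else is a bookkeeping application of Theorems \ref{T1} and \ref{T2}. The point that deserves care is consistency of conventions: one must check that the normalization of the minimal product used in \cite{Z25} coincides with the one entering the hypotheses of Theorem \ref{T2}, and that the regular-cone status of $C(L^{\times n})$ (ensuring the only admissible singular locus of a calibration is the origin) holds in the stated generality. Both are routine once the product embedding is fixed consistently across the three statements.
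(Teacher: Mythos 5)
Your proof is correct and follows precisely the route the paper intends: citing \cite{Z25} to place $C(L^{\times n})$ among the Lawlor cones for $n$ large, then applying Theorem \ref{T1} for the coflat calibration singular only at the origin and Theorem \ref{T2} (with $L_1=\cdots=L_n=L$ of codimension one) to rule out smooth calibrations. The paper gives no separate proof of the corollary, stating only that it follows from these three ingredients, so your write-up matches the intended argument, with the added (harmless) observations about regularity of the cone and consistency of the minimal-product normalization.
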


                         The paper is organized as follows.
                          To be self-contained,
                          we review some basics and useful tools from calibrated geometry
                           in \S \ref{S2}
                          and moreover establish a much better gluing result in \S \ref{S3} which can significantly simplify/improve the gluing procedures in the  framework.
                         A proof of Theorem \ref{T1} will be included in \S \ref{S4},
                         and
                     a shortcut for the realization result in \cite{z} will be found.
                         In \S \ref{S5} we give a proof of Theorem \ref{T2} by virtue of a result by J. Simons. 
                          In \S \ref{S6} we  extend our duality obstruction beyond $(\ast)$
                          and situation with more complicated singular set will also be mentioned.
                          
                          {\ }
                          

                            \section{Basics of calibrations}\label{S2}
Let us first review some necessary definitions and basic properties in the theory of calibrations.

                              \subsection{Definitions and basic properties. }
                              Let us begin with smooth forms.

  \begin{defn}\label{comassdef}
            Let $\phi$ be a smooth $m$-form on a Riemannian manifold $(X,g)$.
            At a point $x\in X$ the \textbf{comass} of $\phi_x$ is defined to be
                     \begin{equation*}
                     \|\phi\|_{x,g}^*=\max \ \{\phi_x( \overrightarrow V_x) : \overrightarrow V_x \ \text{is a unit simple m-vector at x}\}.
                     \end{equation*}
             Here {``simple"} means $\overrightarrow V_x=e_1\wedge e_2\cdots \wedge e_m$
             for some $e_i\in T_xX$.
             \end{defn}
       
            A remark in \cite{Z12} about the comass that we shall use is the following.
            \begin{lem}\label{comassf}
            At a point $x$ where $\phi_x\neq0$, we have
                       \begin{equation*}
                            \begin{split}
                            \|\phi\|^*_{x,g}& =  \max \{\phi(\overrightarrow V_x): \overrightarrow V_x\ \text{is a simple $m$-vector  at } x \text{ with}\ 
\|\overrightarrow V_x\|_{g}=1\}\\
                  &=\max\{1/{\|\overrightarrow V_x\|_{g}}: \overrightarrow V_x\ \text{is\ a\ simple\ $m$-vector\ at } x \text{ with}\ \phi(\overrightarrow V_x)=1\}\ \\
                  &=1/\min\{\|\overrightarrow V_x\|_{g}:\overrightarrow V_x\ \text{is\ a\ simple\ $m$-vector\ at } x \text{ with}\ \phi(\overrightarrow V_x)=1\}.
                             \end{split}
                         \end{equation*}
             \end{lem}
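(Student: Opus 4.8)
The plan is to derive all three equalities from the homogeneity of the linear pairing $\overrightarrow V_x\mapsto\phi_x(\overrightarrow V_x)$ together with a single non‑degeneracy observation. First I would record that, since $\phi_x\neq 0$ and simple $m$‑vectors span $\Lambda^m T_xX$, there is a simple $m$‑vector $W$ with $\phi_x(W)\neq 0$; replacing $W$ by $-W$ (still simple) we may assume $\phi_x(W)>0$, and then $\phi_x(W/\|W\|_g)>0$, so the comass $c:=\|\phi\|^*_{x,g}$ of Definition \ref{comassdef} is strictly positive and each of the three constraint sets in the lemma is nonempty. The first displayed equality is then merely a restatement of the definition, since ``unit'' means $\|\overrightarrow V_x\|_g=1$.

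For the second equality I would compare both sides with the scale‑invariant quantity $f(\overrightarrow V_x):=\phi_x(\overrightarrow V_x)/\|\overrightarrow V_x\|_g$ defined on nonzero simple $m$‑vectors. On one hand, every nonzero simple $\overrightarrow V_x$ rescales to a unit simple vector without changing $f$, and $f$ restricted to unit simple vectors is just $\phi_x$; hence $c=\sup f$, and this supremum is attained because the set of unit simple $m$‑vectors is the continuous image of the compact Stiefel manifold of orthonormal $m$‑frames of $T_xX$, on which the linear functional $\phi_x$ is continuous. On the other hand, on the slice $\{\overrightarrow V_x\text{ simple}:\phi_x(\overrightarrow V_x)=1\}$ one has $1/\|\overrightarrow V_x\|_g=f(\overrightarrow V_x)$, so the right‑hand side of the second equality equals $\sup\{f(\overrightarrow V_x):\phi_x(\overrightarrow V_x)=1\}$; since any simple $\overrightarrow V_x$ with $\phi_x(\overrightarrow V_x)>0$ can be rescaled into this slice with $f$ unchanged, while those with $\phi_x(\overrightarrow V_x)\le 0$ give $f\le 0<c$ and are irrelevant to the supremum, this last supremum again equals $c$. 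Attainment on the right is explicit: if $\phi_x(\overrightarrow V_0)=c$ with $\|\overrightarrow V_0\|_g=1$, then $\overrightarrow V_0/c$ is simple, lies in the slice, and realizes $1/\|\overrightarrow V_0/c\|_g=c$.

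The third equality is then immediate: on the set $\{\overrightarrow V_x\text{ simple}:\phi_x(\overrightarrow V_x)=1\}$ every vector satisfies $\|\overrightarrow V_x\|_g\ge 1/c>0$ (because $\phi_x(\overrightarrow V_x)\le c\,\|\overrightarrow V_x\|_g$ for all simple $\overrightarrow V_x$), and we have just exhibited one with $\|\overrightarrow V_x\|_g=1/c$, so $\min\|\overrightarrow V_x\|_g=1/c$ is attained and $\max\,1/\|\overrightarrow V_x\|_g=c=1/\min\|\overrightarrow V_x\|_g$.

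If there is any obstacle at all it is pure bookkeeping: the slice $\{\phi_x=1\}$ is noncompact, so attainment of the minimum there is not automatic and must be supplied by the explicit scaled maximizer $\overrightarrow V_0/c$; and one must track the sign of $\phi_x(\overrightarrow V_x)$ so as not to confuse $f(\overrightarrow V_x)$ with $1/\|\overrightarrow V_x\|_g$ on directions where $\phi_x$ is negative. The strict positivity $c>0$, which is exactly the content of the hypothesis $\phi_x\neq 0$, is what renders those negative directions harmless.
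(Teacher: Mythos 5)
The paper does not actually prove this lemma---it is stated as a ``remark in \cite{Z12}'' and taken as known---so there is no proof in this text to compare against. Your argument is correct and complete: the scale-invariant ratio $f(\overrightarrow V_x)=\phi_x(\overrightarrow V_x)/\|\overrightarrow V_x\|_g$ is the right object, positive homogeneity moves between the three normalizations, compactness of the set of unit simple $m$-vectors (via orthonormal $m$-frames) gives attainment of $c=\|\phi\|^*_{x,g}$, and the rescaled maximizer $\overrightarrow V_0/c$ supplies attainment on the noncompact slice $\{\phi_x=1\}$, which is exactly the one non-automatic point. You also correctly isolate why $\phi_x\neq 0$ is needed: it forces $c>0$, which both makes the slice nonempty and renders the directions with $\phi_x\le 0$ irrelevant to the suprema.
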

             
             The dual complex of smooth forms gives us currents.
             To be more precise,  by $(\mathscr E'_*(X),d)$ we mean the dual complex of the {\it de Rham} complex of $X$.
               Elements of $\mathscr E'_m(X)$ are $m$-dimensional {\it de Rham} 
                                                                                   {currents} 
                                                                                   (with compact support)
               and $d$ is the adjoint of exterior differentiation.
             
               \begin{defn}\label{dmass}
               The \textbf{mass} $\mathrm{\mathbf{M}}(T)$ of $T\in\mathscr E'_m(X)$ is defined to be
               $$\sup \{T(\psi):\psi\ smooth\ m\text{-}form\ with\  \sup_{x\in X}\|\psi\|_{x, g}^*\leq 1\}.$$
               \end{defn}
          When $T$ has finite mass, 
          there exists a unique {Radon} measure $\|T\|$ satisfying
                 $$ \int_X f\cdot d\|T\|=\sup\{T(\psi): \|\psi\|_{x,g}^*\leq f(x) \}$$
          for any nonnegative continuous function $f$ on $X$.
          Moreover, the {Radon}-{Nikodym} Theorem asserts the existence of
          a $\|T\|$ measurable tangent $m$-vector field $\overrightarrow T$ a.e. with 
          vectors $\overrightarrow T_x \in \Lambda^m T_xX$ of unit length in the dual norm of the comass norm, with
                \begin{equation}\label{current}
                T(\psi)= \int_X\psi_x(\overrightarrow {T_x})\ d \|T\|(x)\ \  \text{for any smooth $m$-form }\psi,
                \end{equation}
           \text{or\ briefly}
           $T = \overrightarrow T\cdot \|T\|\ a.e.\ \|T\|.$
                          
                          The groundbreaking  paper \cite{FF}  by Federer and Fleming  introduced the concepts  of normal currents and integral currents. 
                 Let $\mathbb M_m(X)=\{T\in\mathscr E'_m(X): \mathrm{\mathbf{M}}(T)<\infty\}$.
                 Then $N_m(X)=\{T\in\mathbb M_m(X): dT\in\mathbb M_{m-1}(X)\}$ is the space of $m$-dimensional 
                                                         {normal\ currents}.
                 Note that $(N_*(X),d)$ form a chain complex
                 and moreover
                 by {de Rham}, Federer and Fleming, we have
                 natural isomorphisms
                            \begin{equation}\label{iso}
                            H_*(\mathscr E'_*(X))\cong H_*(X;\mathbb R)\cong H_*(N_*(X)).
                            \end{equation}
 
In this paper we adopt the following terminologies.
  
           \begin{defn}\label{calibration}
           A smooth form $\phi$ on $(X,g)$ is called a \textbf{calibration} if 
           $\sup_{X}\|\phi\|_{g}^*= 1$ 
           and
           $d\phi=0.$
           Such a triple $(X,\phi,g)$ is called a \textbf{calibrated manifold}.
          \end{defn}
          
          \begin{defn}\label{calibratable}
                   Let $\phi$ be a calibration on $(X,g)$.
                   We say that a current $T$ of local finite mass is \textbf{calibrated} by $\phi$, if 
                        $\phi_x(\overrightarrow T_x)=1\ a.a.\ x\in X\ \text{for}\ \|T\|.$
                     In this case we say $T$ \textbf{supports} calibration $\phi$.
          \end{defn}

                    The  fundamental theorem of calibrated geometry below
                    says that every calibrated current is mass-minimizing in its homology class.
                    By default  the homology is taken for normal currents 
                    but in fact it holds for competitors of
                    {\it de Rham} currents.
                              \begin{thm}[\cite{HL}]\label{hl}
                                           If $T$ is a calibrated current 
                                with compact support
                                           in $(X,\phi,g)$ and
                                           $T'$ is any compactly supported current homologous to $T$(i.e., $T-T'$ is a boundary and in particular $dT=dT'$),
                                           then
                                                          \begin{equation*}
                                                          \mathrm{\mathbf{M}}(T)\leq  \mathrm{\mathbf{M}}(T')
                                                          \end{equation*}
                                          with equality if and only if $T'$ is calibrated as well.
                               \end{thm}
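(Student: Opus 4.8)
The assertion is the basic calibration inequality, and the plan is to prove it through the classical three-term chain
\[
\mathbf{M}(T)\;=\;T(\phi)\;=\;T'(\phi)\;\leq\;\mathbf{M}(T'),
\]
treating each link separately and then reading the equality case off the single inequality that appears. For the final inequality: since $\phi$ is a calibration, $\sup_{x\in X}\|\phi\|^{*}_{x,g}=1$, so $\phi$ is one of the admissible test forms in Definition \ref{dmass} applied to $T'$, whence $T'(\phi)\leq\mathbf{M}(T')$. (We may assume $\mathbf{M}(T')<\infty$, otherwise there is nothing to prove.)

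For the middle equality I would use the homology hypothesis together with closedness of $\phi$. Since $T$ and $T'$ are compactly supported and homologous, $T-T'=dS$ for a compactly supported current $S\in\mathscr{E}'_{m+1}(X)$, and because $d$ on currents is the adjoint of exterior differentiation while $d\phi=0$,
\[
T(\phi)-T'(\phi)=(dS)(\phi)=S(d\phi)=0 .
\]
This is the one manipulation that needs a little care: it is legitimate precisely because $S$ is compactly supported (so $S(d\phi)$ is defined) and because $\phi$ is closed, so both the compact-support part of the hypothesis and the defining property $d\phi=0$ of a calibration are used here.

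For the first equality I would invoke the representation \eqref{current}: writing $T=\overrightarrow{T}\cdot\|T\|$ and using that $T$ is calibrated, i.e. $\phi_x(\overrightarrow{T_x})=1$ for $\|T\|$-a.e. $x$ (Definition \ref{calibratable}),
\[
T(\phi)=\int_{X}\phi_x(\overrightarrow{T_x})\,d\|T\|(x)=\int_{X}1\,d\|T\|(x)=\|T\|(X)=\mathbf{M}(T),
\]
the last equality being the case $f\equiv1$ of the Radon-measure identity recorded just after Definition \ref{dmass}. Chaining the three displays gives $\mathbf{M}(T)\leq\mathbf{M}(T')$.

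Finally, the equality case is extracted from the single inequality used above. By \eqref{current} and the fact that $\overrightarrow{T'_x}$ has unit length in the dual norm of the comass norm,
\[
T'(\phi)=\int_{X}\phi_x(\overrightarrow{T'_x})\,d\|T'\|(x)\leq\int_{X}\|\phi\|^{*}_{x,g}\,d\|T'\|(x)\leq\int_{X}1\,d\|T'\|(x)=\mathbf{M}(T'),
\]
so $\mathbf{M}(T)=\mathbf{M}(T')$ forces both inequalities to be equalities, hence $\phi_x(\overrightarrow{T'_x})=1$ for $\|T'\|$-a.e. $x$, i.e. $T'$ is calibrated; conversely, if $T'$ is calibrated the same computation gives $T'(\phi)=\mathbf{M}(T')$ and equality holds throughout. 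I do not expect any genuine obstacle here: the entire substance of the argument is already contained in the definitions and in the representation formula \eqref{current} recalled above, and the only thing to keep straight is the compact-support/de~Rham bookkeeping that makes the identity $(dS)(\phi)=S(d\phi)$ meaningful.
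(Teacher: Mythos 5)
Your proof is correct and is precisely the classical Harvey--Lawson argument; the paper itself gives no proof of Theorem~\ref{hl} but simply cites \cite{HL}, and your three-term chain $\mathrm{\mathbf{M}}(T)=T(\phi)=T'(\phi)\leq\mathrm{\mathbf{M}}(T')$ together with the equality analysis via the Radon--Nikodym representation \eqref{current} is exactly the argument being invoked there. The only bookkeeping worth making explicit, which you handle correctly, is that both $T$ and $T'$ have the pointwise representation \eqref{current} because calibrated currents are by Definition~\ref{calibratable} of locally finite mass and hence (being compactly supported) of finite mass, and that $\phi_x(\overrightarrow{T'_x})\leq\|\phi\|^{*}_{x,g}$ follows from the duality between the mass and comass norms since $\overrightarrow{T'_x}$ has unit mass norm even when it fails to be simple.
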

                               
                                                       The subclass $\mathscr R_m(X)\subset \mathbb M_m(X)$ of rectifiable $m$-currents
                            is 
                             the closure of the integral Lipschitz $m$-chains in the $\bold M$-topology. 
                            Then $I_m(X):=\big\{T\in \mathscr R_m(X): dT\in \mathscr R_{m-1}(X)\big\}$
                               is the set of {integral} $m$-currents.
                               Federer and Fleming \cite{FF} also established the corresponding isomorphism 
                               \begin{equation}\label{isoz}
                               H_*(I_*(X))\cong H_*(X;\mathbb Z).
                               \end{equation}

                               So a calibrated integral current
                                minimizes the mass not only in the homology class of integral currents but also that for normal currents.
                               If one merely focuses on its minimality in \eqref{isoz} then it is called (homologically) area-minimizing.

                               In practice,  sometimes one cannot avoid calibrations with singularities.
                               
                                \begin{defn}\label{coflat}
           Let $\phi$ be a calibration of degree $m$ on $X-S_\phi$,
           where $S_\phi$ is a closed subset of $X$ of Hausdorff $m$-measure zero.
           Then $\phi$ is called a \textbf{coflat calibration} on $X$.
           We say $\phi$ \textbf{calibrates} a current, if it is calibrated by $\phi$ on $X-S_\phi$.
           Similarly, in such a case we say the current \textbf{supports} the coflat calibration $\phi$.
          \end{defn}

              A coflat version (Theorem 4.9 in \cite{HL}) of the fundamental theorem of calibrated geometry is also true.
              Namely,    a current calibrated by a coflat calibration $\phi$ in $X-S_\phi$ is homologically mass-minimizing in the sense of Theorem \ref{hl}.
              More generally,
                     the same property holds even for certain non-continuous ``calibrations" with larger dimensional singular set, e.g. see Lawlor's calibrations in \S4, 
                     which provide a sufficient criterion for a regular cone to be  area-minimizing.


      Although our methods in \cite{Z12, z} are quite soft, 
           however there exists certain local rigidity 
           in the sense that after blowing-up 
                   the local models always live in some Euclidean spaces with standard metrics.
           Hence, to begin with suitable local models is important to us.
           
         According to a fundamental result (Theorem 5.4.3 in \cite{F})
           there always exists at least one tangent cone for area-minimizing integral current,
           and moreover, the tangent cone is area-minimizing.
           In this paper we mainly focus on the situation of regular area-minimizing cones in  standard Euclidean space $(\R^n, g_E)$.

      \subsection{Four lemmas. }        
             A beautiful canonical decomposition lemma for an $m$-form with respect to certain $m$-plane in $\R^n$
            is the following (its proof in \cite{HL1} inspires a remark for $m=1$ and Lemma \ref{L2}, so we include it in full details).
            
            \begin{lem}[Lemma 2.12 in \cite{HL1}] \label{hl1}
             Let $\xi\in \Lambda^m \mathbb{R}^n$ be a simple $m$-vector with
             $V = span\ \xi$. Suppose $\phi\in\Lambda^m\big(\mathbb{R}^n\big)^*$ satisfies  $\phi(\xi)= 1$. 
              Then there exists a unique oriented complementary subspace $W$ to $V$  with the following property.
                      For any basis $v_1, \cdots, v_n$ of $\mathbb{R}^n$ such that $\xi=v_1\wedge... \wedge v_m$
                      and $v_{m+1}, \cdots, v_n$ is basis for $W$, 
                      one has that
                                 \begin{equation}
                                 \phi=v_1^*\wedge \cdots \wedge v_m^*+ \sum a_Iv_I^*,
                                  \end{equation}
                                   where $a_I=0$ whenever $i_{m-1}\leq m$. 
                                   Here $I=\{i_1,\cdots, i_m\}$ with $i_1<\cdots<i_m$.
              \end{lem}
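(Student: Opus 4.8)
\medskip\noindent\textit{Proof plan.}
The plan is to \emph{construct} the complement $W$ explicitly, as the kernel of a linear map built from $\phi$ and $V$, and then to obtain both the expansion and the uniqueness formally from its defining property; for $m=1$ this construction is simply $W=\ker\phi$, the remark alluded to above.

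First I would introduce the linear map $\Phi\colon\mathbb{R}^n\to\Lambda^{m-1}V^*$ defined by $\Phi(w)(\eta)=\phi(\eta\wedge w)$ for $\eta\in\Lambda^{m-1}V$, and set $W:=\ker\Phi$. The key step — the only one I expect to be more than bookkeeping — is to show that $\Phi$ restricts to an isomorphism of $V$ onto $\Lambda^{m-1}V^*$. For this I would use that $\dim V=m$: then $\Lambda^mV=\mathbb{R}\,\xi$ is one-dimensional, so the hypothesis $\phi(\xi)=1$ forces the restriction $\phi|_{\Lambda^mV}$ to equal the volume form $\mu\in\Lambda^mV^*$ dual to $\xi$; hence for $v\in V$ one gets $\Phi(v)(\eta)=\mu(\eta\wedge v)=\pm(\iota_v\mu)(\eta)$, so $\Phi|_V$ agrees up to sign with the classical isomorphism $V\xrightarrow{\sim}\Lambda^{m-1}V^*$, $v\mapsto\iota_v\mu$, induced by the volume form $\mu$ on the $m$-dimensional space $V$. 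Since $\dim\Lambda^{m-1}V^*=m$, this gives at once $V\cap W=\ker(\Phi|_V)=\{0\}$ and $\dim W=n-m$, i.e. $\mathbb{R}^n=V\oplus W$; one then orients $W$ so that a positive frame of $W$ completes $v_1,\dots,v_m$ to a positively oriented basis of $\mathbb{R}^n$ ($V$ carrying the orientation of $\xi$). This is exactly the point where the two hypotheses enter; everything downstream of the splitting $\mathbb{R}^n=V\oplus W$ is routine.

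Next I would verify the displayed formula. Given any basis $v_1,\dots,v_n$ with $\xi=v_1\wedge\cdots\wedge v_m$ and $v_{m+1},\dots,v_n$ a basis of $W$, write $\phi=\sum_{|I|=m}a_I\,v_I^*$ with $a_I=\phi(v_{i_1}\wedge\cdots\wedge v_{i_m})$. Then $a_{\{1,\dots,m\}}=\phi(\xi)=1$, which is the leading term. If a multi-index $I$ satisfies $i_{m-1}\le m$ but $I\neq\{1,\dots,m\}$, then necessarily $I=(\{1,\dots,m\}\setminus\{j\})\cup\{k\}$ with $j\le m<k$, so in increasing order $v_I=\eta\wedge v_k$ where $\eta:=v_1\wedge\cdots\wedge\widehat{v_j}\wedge\cdots\wedge v_m\in\Lambda^{m-1}V$; since $v_k$ is a basis vector of $W=\ker\Phi$ this gives $a_I=\Phi(v_k)(\eta)=0$, which is precisely the assertion $a_I=0$ whenever $i_{m-1}\le m$.

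Finally I would prove uniqueness. Suppose $W'$ is another complementary subspace enjoying the stated property; fix $w\in W'$ with $w\ne0$ and an arbitrary nonzero $\eta\in\Lambda^{m-1}V$. Because $\dim V=m$, every nonzero element of $\Lambda^{m-1}V$ is decomposable, say $\eta=v_1\wedge\cdots\wedge v_{m-1}$ with $v_1,\dots,v_{m-1}$ linearly independent; complete these to a basis of $V$, rescaling the last vector so that $v_1\wedge\cdots\wedge v_m=\xi$, and complete $w$ to a basis $v_{m+1}=w,v_{m+2},\dots,v_n$ of $W'$. Applying the stated property of $W'$ to this basis, the coefficient $a_I$ with $I=\{1,\dots,m-1,m+1\}$ — which has $i_{m-1}=m-1\le m$ — must vanish, i.e. $\phi(\eta\wedge w)=0$; as $\eta$ was arbitrary this forces $\Phi(w)=0$, hence $W'\subseteq\ker\Phi=W$, and therefore $W'=W$ by a dimension count.
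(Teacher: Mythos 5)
Your proof is correct and uses the same essential construction as the paper: define $W$ as the kernel of the linear map $w\mapsto\phi(\,\cdot\,\wedge w)$ on $\Lambda^{m-1}V$ (the paper phrases this as the common kernel of the $m$ covectors $\lambda_i=\eta_i\llcorner\phi$, which is the same subspace), and verify the vanishing of the $a_I$ by pairing $\phi$ with $\eta\wedge v_j$ for $\eta\in\Lambda^{m-1}V$ and $v_j\in W$. Your reformulation of the complementarity step — showing that $w\mapsto\phi(\,\cdot\,\wedge w)$ restricted to $V$ is, up to sign, the isomorphism $v\mapsto\iota_v\mu$ of $V$ onto $\Lambda^{m-1}V^*$ — is a slightly cleaner packaging of the paper's observation that $(\lambda_1\wedge\cdots\wedge\lambda_m)(\xi)=1$, and it makes manifest that $W$ does not depend on the choice of basis used to build $V$. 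The one genuine addition is your explicit uniqueness argument: the paper's proof constructs $W$ and verifies the decomposition but never returns to the asserted uniqueness, whereas you show any competing $W'$ must sit inside $\ker\Phi$ by choosing suitable decomposable $\eta$ and reading off the vanishing coefficient $a_{\{1,\dots,m-1,m+1\}}$, then finishing by a dimension count. That argument is correct (it relies on the fact that every element of $\Lambda^{m-1}V$ is decomposable when $\dim V=m$, which you invoke correctly) and closes a small gap left in the paper's exposition.
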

              \begin{rem}
              Note that the decomposition 
              does not depends on ambient metric. 
              \end{rem}
                 \begin{proof}
                      Define linear map
                      $$
                      \la:  \Lambda^{m-1} V \longrightarrow  \text{Hom} ( \R^n, \, \R)
                      $$
                      by 
                      $
                      \eta \longmapsto \eta \lrcorner \phi ,
                      $
                      namely, $\la(\eta)(v)=\phi(\eta\wedge v)$.
                      Set 
                                      $\eta_i=(-1)^{m+i}       v_1\wedge\cdots \wedge \hat v_i\wedge \cdots \wedge v_m$
                                      for $i=1,\cdots, m$.
                                      Consider forms
                                      $\la_i=\la(\eta_i)$.
                                      Then we have
                          \begin{equation}\label{pair}
                                    \la_i(v_j)=(-1)^{m+i}
                                              \phi
                                              (  v_1\wedge\cdots \wedge \hat v_i\wedge \cdots \wedge v_m \wedge v_j)=\delta_{ij}
                         \end{equation}
                                      for $1\leq i\leq m$ and $1\leq j\leq m$.
                                      Since
                                         $(\la_1\wedge \cdots \wedge\la_m)(\xi)=1$,
                                         it follows that
                                 \begin{equation}\label{W}
                                         W\triangleq 
                                         \bigcap_{i=1}^m \ker(\la_i)
                               \end{equation}
                                      is a subspace complementary  to $V = \text{span}\ \xi$.
                                      
                                      Choose $v_{m+1}, \cdots, v_n$
                                      to be any basis of $W$.
                                      Then $\phi(v_1\wedge \cdots \wedge \hat v_i \wedge \cdots \wedge v_m \wedge v_j)=\pm\la_i(v_j)=0$
                                      for $1\leq i\leq m$ and $m+1\leq j\leq n$.
                                      Hence the proof gets completed.
                                                         \end{proof}

                            A powerful lemma to nicely control comass of a form by altering metics is the following.
                    \begin{lem}[Lemma 2.14 in \cite{HL1}]\label{hl2}
                    Let $\phi,\  V=span \ \xi$, and $W$ be given as in Lemma \ref{hl1}.
                    Consider an inner product $\big<\cdot, \cdot\big>$ on $\mathbb{R}^n$ such that $V\perp W$ and $\|\xi\|=1$. 
                    Choose any constant $C^2 > \bigl( \begin{smallmatrix} n\\ m\end{smallmatrix} \bigr) \|\phi\|^*$
                    and define a new inner product on $\mathbb{R}^n=V\oplus W$
                    by setting $\big<\cdot,\cdot\big>'=\big<\cdot,\cdot\big>_V+C^2\big<\cdot,\cdot\big>_W$.
                    Then under this new metric we have
                             $$\|\phi\|^*=1\ and\ \phi(\xi)=\|\xi\|=1.$$
             \end{lem}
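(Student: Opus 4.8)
The plan is to push everything into the canonical frame produced by Lemma \ref{hl1}, perform the rescaling there, and then verify the comass equality by pairing $\phi$ against simple $m$-vectors. First I would choose an orthonormal basis $v_1,\dots,v_n$ of $(\R^n,\langle\cdot,\cdot\rangle)$ with $\xi=v_1\wedge\cdots\wedge v_m$ and $v_{m+1},\dots,v_n$ a basis of $W$ — this is possible precisely because $V\perp W$ and $\|\xi\|=1$ force $v_1,\dots,v_m$ to be an orthonormal basis of $V$. Lemma \ref{hl1} then gives
\[
\phi=v_1^*\wedge\cdots\wedge v_m^*+\sum_I a_I\,v_I^*,
\]
where every multi-index $I$ occurring in the sum has $i_{m-1}>m$, hence carries at least two indices in $\{m+1,\dots,n\}$; moreover $|a_I|\le\|\phi\|^*$ since each $v_I$ is a unit simple $m$-vector for $\langle\cdot,\cdot\rangle$, and there are at most $\binom{n}{m}$ such $I$.

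Under the new metric, the vectors $e_i:=v_i$ for $i\le m$ together with $e_j:=C^{-1}v_j$ for $j>m$ form an orthonormal basis, so $v_I^*=C^{-k(I)}e_I^*$, where $k(I)\ge 2$ counts the indices of $I$ exceeding $m$. Thus $\phi=e_1^*\wedge\cdots\wedge e_m^*+\psi$ with $\psi:=\sum_I a_I C^{-k(I)}e_I^*$, and the triangle inequality for comass yields, in the new metric, $\|\psi\|^*\le\sum_I|a_I|C^{-k(I)}\le C^{-2}\binom{n}{m}\|\phi\|^*<1$ by the choice of $C$. Since $\xi=e_1\wedge\cdots\wedge e_m$ is a unit simple $m$-vector for the new metric with $\phi(\xi)=1$, we already obtain $\phi(\xi)=\|\xi\|=1$ and $\|\phi\|^*\ge 1$ there; the content is the reverse bound $\|\phi\|^*\le 1$.

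For this I would invoke the third (and last) expression in Lemma \ref{comassf}: it suffices to show that every simple $m$-vector $\eta$ with $\phi(\eta)=1$ satisfies $\|\eta\|\ge 1$ in the new metric. If the $\Lambda^m V$-component of $\eta$ is zero or a nonpositive multiple of $\xi$, then $\psi(\eta)=\phi(\eta)-(e_1^*\wedge\cdots\wedge e_m^*)(\eta)\ge 1$, and since $\eta$ is simple (so its mass norm equals its Euclidean norm) the comass bound forces $1\le\psi(\eta)\le\|\psi\|^*\|\eta\|<\|\eta\|$. Otherwise $\mathrm{span}\,\eta$ is a graph over $V$, so $\eta=x\bigwedge_{i=1}^m(e_i+Le_i)$ for a unique linear $L\colon V\to W$ and a scalar $x>0$ with $1+\Psi(L)=1/x>0$, where $\Psi(L):=\psi\bigl(\bigwedge_i(e_i+Le_i)\bigr)$; here $\|\eta\|^2=x^2\det(I+L^{\top}L)$, so $\|\eta\|\ge 1$ is equivalent to $\det(I+L^{\top}L)\ge(1+\Psi(L))^2$. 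Using $\det(I+L^{\top}L)\ge 1+\tr(L^{\top}L)$ it is then enough to bound $2\Psi(L)+\Psi(L)^2$ by $\tr(L^{\top}L)$; and $\Psi(L)$ is a signed sum over the above $I$ of $a_I C^{-k(I)}$ times a $k(I)\times k(I)$ minor of $L$, which by Cauchy--Schwarz together with Maclaurin's inequality on those minors is $\le C^{-2}\binom{n}{m}\|\phi\|^*$ times a quantity quadratic in $\tr(L^{\top}L)$.

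The delicate step is exactly this last estimate. Because each surviving term of $\psi$ has at least two $W$-legs — which is precisely the content of the decomposition in Lemma \ref{hl1} — the map $L\mapsto\Psi(L)$ vanishes to second order, so it is genuinely of the same size as the defect $\det(I+L^{\top}L)-1$ rather than dominating it; the work is to confirm that the combinatorial constants appearing when one sums over all admissible $k\times k$ minors of $L$ (for $k=2,3,\dots$) are absorbed by the single factor $C^{-2}\binom{n}{m}\|\phi\|^*<1$, so that $2\Psi(L)+\Psi(L)^2\le\tr(L^{\top}L)$ holds. A short case distinction on whether $\tr(L^{\top}L)\le 1$ (small tilt, where $\det(I+L^{\top}L)-1$ is comparable to $\tr(L^{\top}L)$) or $\tr(L^{\top}L)>1$ (large tilt, where $\det(I+L^{\top}L)\ge 2$ already leaves ample slack) then completes the argument.
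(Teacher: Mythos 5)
The paper itself does not prove this lemma; it quotes Lemma 2.14 of Harvey--Lawson's \emph{Calibrated foliations} without reproducing the argument, so the comparison is to that source rather than to anything in this paper.

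Your setup is sound: choosing an orthonormal basis $v_1,\dots,v_n$ adapted to $V\oplus W$ with $\xi=v_1\wedge\cdots\wedge v_m$ (possible, though $\|\xi\|=1$ does not \emph{force} orthonormality of the $v_i$ --- you have to construct such a basis, which Gram--Schmidt on $V$ gives you), the rescaling $e_j=C^{-1}v_j$ for $j>m$, the bound $\|\psi\|^*\le C^{-2}\binom{n}{m}\|\phi\|^*<1$, the dichotomy between the non-graph case and the graph case, and the identities $\phi(\eta)=x(1+\Psi(L))$, $\|\eta\|^2=x^2\det(I+L^{\top}L)$ are all correct. The gap is in the last step. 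You replace the target inequality $\det(I+L^{\top}L)\ge(1+\Psi(L))^2$ by the stronger claim $1+\operatorname{tr}(L^{\top}L)\ge(1+\Psi(L))^2$, i.e. $2\Psi(L)+\Psi(L)^2\le\operatorname{tr}(L^{\top}L)$, and assert this follows from Cauchy--Schwarz, Maclaurin, and a case split on $\operatorname{tr}(L^{\top}L)\lessgtr 1$. That reduced inequality is simply false. Take $m=2$, $n=4$, $\phi=v_1^*\wedge v_2^*+v_3^*\wedge v_4^*$ (so $\|\phi\|^*=1$), $C^2=7>\binom{4}{2}=6$, and $L=\lambda I$. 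Then $\Psi(L)=\tfrac{1}{7}\det L=\tfrac{\lambda^2}{7}$ and $\operatorname{tr}(L^{\top}L)=2\lambda^2$, so $2\Psi+\Psi^2=\tfrac{2\lambda^2}{7}+\tfrac{\lambda^4}{49}$, which exceeds $2\lambda^2$ once $\lambda^2>84$ --- even though the genuine target $\det(I+L^{\top}L)=(1+\lambda^2)^2\ge(1+\tfrac{\lambda^2}{7})^2$ holds with room to spare. The discard $\det(I+L^{\top}L)\rightsquigarrow 1+\operatorname{tr}(L^{\top}L)$ throws away exactly the higher Cauchy--Binet terms $\sum_{k\ge2}\sum_{|S|=|T|=k}\det(L_{T,S})^2$ that are needed to dominate $\Psi^2$, since $\Psi$ itself is a linear combination of those $k\times k$ minors with $k\ge 2$. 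The correct route keeps the full expansion $\det(I+L^{\top}L)=\sum_{k\ge0}\sum_{T,S}M_{T,S}(L)^2$ and compares it term by term with $(1+\Psi)^2=\bigl(\sum_{k\ne1}b_I M_I\bigr)^2$ via a weighted Cauchy--Schwarz, rather than truncating at the trace; without that, the ``delicate step'' you flag remains open and, in the form you stated it, fails.
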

  \begin{rem}\label{imp}
             If 
 $\phi(\xi)=\vartheta$ (positive) not necessarily one,
 one can apply {\it Lemma \ref{hl1}} to $\vartheta^{-1}\phi$ for
 $\|\phi\|^*=\vartheta,\ \|\xi\|=1\ and\ \phi(\xi)=\vartheta$
 by choosing $C^2>\vartheta^{-1} \big( \begin{smallmatrix} n\\ m\end{smallmatrix} \big)
\|\phi\|^*$.
              \end{rem}

                 In fact the ideas of Lemmas \ref{hl1} and \ref{hl2} also work for $m=1$.
                 To illustrate how they apply, we consider the situation in the figure.
                                                              \begin{figure}[h]
\begin{center}
\includegraphics[scale=0.8]{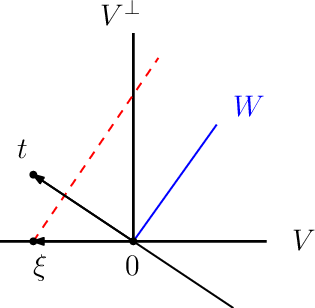}
\end{center}
\end{figure}
                      Although we do not need a metric a priori,  
                for simplicity, 
                      let  $\xi$ be of unit length and $t$ a vector with $\big<t, t\big>_{g_E}>1$ and $\big<\xi, t\big>_{g_E}=1$
                      in $(\R^n, g_E)$.
                      Then we can construct a form $\phi(\cdot)=\big<t,\cdot \big>_{g_E}$ 
                      which has the property $\phi(\xi)=1$.
                      
                              Now directly define $W\triangleq \ker(\phi)$.
                              Then $\{v\in \R^n\, :\,  \phi(v)=\big<\xi, v\big>=1\}=\{\xi\}+W$.
                             Define $g'(\cdot, \cdot)=\big<\cdot,\cdot\big>_V+C^2\big<\cdot,\cdot\big>_W$ as in Lemma \ref{hl2}.
                             Then, by Lemma \ref{comassf}
                             we have 
                             \[
                               \begin{split}
                              \|\phi\|^*_{g'} &=1/\min\{\| v\|_{g'}: v\ \text{is\ a\ vector\ at } 0 \text{ with}\ \phi( v)=1\}
                              \\
                              &=1/\sqrt{\|\xi\|^2+C^2\big<v-\xi, v-\xi\big>_W}
                               \\
                                          &\leq 1
                                \end{split}
                             \]
                             with equality if and only if $v=\xi$.
                              Note that here any $C^2>0$ is sufficient for the conclusion $\|\phi\|^*_{g'}=1$ and $\phi(\xi)=\|\xi\|_{g'}=1.$

                 It should be particularly emphasized that the distribution $W$ in the above for generating metric $g'(\cdot, \cdot)=\big<\cdot,\cdot\big>_V+C^2\big<\cdot,\cdot\big>_W$ to make $\|\phi\|^*_{g'}=1$ and $\phi(\xi)=\|\xi\|_{g'}=1$
                 is rigid and not allowed to vary at all.
                 For example, if one prefers to use $W=V^\perp$ and enlarge $C^2$ for $g'=g_E\big|_V+C^2g_E\big|_{V^\perp}$,
                 then no matter how large $C^2$ one chooses it is simply impossible to have $\|\phi\|^*_{g'}=1$. 
                 Similar phenomenon also occurs for simple $m$-form with constant coefficients when $m>1$.
                 
                 {\ }
                 
                 Although Lemma \ref{hl1} is independent on metric, 
                 sometimes when a prior metric gets involved we can have a coupled decomposition.
                      An immediate application to the case where $\phi$ is a calibration with constant coefficients in $(\R^n, g_E)$
                      and $\phi(\xi)= 1$ for some unit simple $m$-vector $\xi$  
                              can be derived.

                      \begin{lem}[Decomposition for calibration form] \label{L2}
                       Let $v_1, \cdots, v_n$ be an orthonormal basis of $(\mathbb{R}^n, g_E)$ with $\xi=v_1\wedge... \wedge v_m$
                       and $V = span\ \xi$.
                       Assume that $\phi$ is a calibration with constant coefficients which calibrates the oriented $V$.
                       Then $W$ in Lemma \ref{hl1} has to be $\text{span}\{v_{m+1}\, \cdots,  v_n\}$ 
                       and as in Lemma \ref{hl1} we have
                         \begin{equation}\label{decomp}
                                 \phi=v_1^*\wedge \cdots \wedge v_m^*+ \sum a_Iv_I^*,
                                  \end{equation}
                                  where $a_I=0$ whenever $i_{m-1}\leq m$. 
                                   Here $I=\{i_1,\cdots, i_m\}$ with $i_1<\cdots<i_m$.
                                                         \end{lem}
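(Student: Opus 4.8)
The plan is to reduce everything to identifying the abstract complementary subspace $W$ produced by Lemma \ref{hl1}. Once we know that $W=\text{span}\{v_{m+1},\dots,v_n\}=V^\perp$, the orthonormal basis $v_1,\dots,v_n$ is admissible in the sense of Lemma \ref{hl1} (it satisfies $\xi=v_1\w\cdots\w v_m$ and has $v_{m+1},\dots,v_n$ a basis of $W$), so Lemma \ref{hl1} directly gives $\phi=v_1^*\w\cdots\w v_m^*+\sum a_Iv_I^*$ with $a_I=0$ whenever $i_{m-1}\le m$, which is \eqref{decomp}. So the whole content beyond Lemma \ref{hl1} is the claim $W=V^\perp$.

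To prove that, recall from the proof of Lemma \ref{hl1} that $W=\bigcap_{i=1}^m\ker(\la_i)$, where $\la_i(v)=(-1)^{m+i}\phi(v_1\w\cdots\w\hat v_i\w\cdots\w v_m\w v)$. Since $W$ is complementary to $V$, it has dimension $n-m$; hence it suffices to show that the $n-m$ linearly independent vectors $v_{m+1},\dots,v_n$ all lie in every $\ker(\la_i)$. Equivalently, I must show
\[
\phi(v_1\w\cdots\w\hat v_i\w\cdots\w v_m\w v_j)=0\qquad\text{for all }1\le i\le m,\ m+1\le j\le n.
\]

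This is where the calibration hypothesis enters, through the maximality of $\phi$ at $\xi$. Fix such $i$ and $j$, and rotate $v_i$ towards $v_j$ inside the plane they span:
\[
\xi(t)=v_1\w\cdots\w v_{i-1}\w(\cos t\,v_i+\sin t\,v_j)\w v_{i+1}\w\cdots\w v_m .
\]
Because $j>m$, the vector $\cos t\,v_i+\sin t\,v_j$ has unit Euclidean length and is orthogonal to each $v_k$ with $k\in\{1,\dots,m\}\setminus\{i\}$; thus $\xi(t)$ is again a unit simple $m$-vector, with $\xi(0)=\xi$. Since $\phi$ is a calibration with $\phi(\xi)=1$, we have $\phi(\xi(t))\le\|\phi\|^*_{g_E}=1=\phi(\xi(0))$, so $t=0$ is an interior maximum of the smooth function $t\mapsto\phi(\xi(t))$ and its derivative vanishes there. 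Differentiating and using multilinearity of $\phi$,
\[
0=\frac{d}{dt}\Big|_{t=0}\phi(\xi(t))=\phi\big(v_1\w\cdots\w v_{i-1}\w v_j\w v_{i+1}\w\cdots\w v_m\big),
\]
which is, up to sign, $\la_i(v_j)$. Hence $v_j\in W$ for every $j>m$, and the dimension count forces $W=\text{span}\{v_{m+1},\dots,v_n\}$, completing the proof.

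The only genuinely new ingredient is the variational observation in the last paragraph; the rest is bookkeeping with Lemma \ref{hl1}. I do not expect a serious obstacle, because the orthogonality $v_j\perp V$ makes the rotated $m$-vectors $\xi(t)$ automatically simple and of unit norm, so the first-order optimality of $\phi$ at $\xi$ passes cleanly to the vanishing of exactly the coefficients singled out in the statement. (One should note explicitly that "calibration with constant coefficients calibrating the oriented $V$" means $d\phi=0$ is automatic and the relevant facts are $\phi(\xi)=1$ and $\|\phi\|^*_{g_E}=1$, which is all the argument uses.)
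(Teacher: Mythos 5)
Your proof is correct and takes essentially the same approach as the paper's. The paper proves $\la_i(v_j)=0$ for $m+1\le j\le n$ by exhibiting the explicitly rotated unit simple $m$-vector with $v_i$ replaced by $\frac{1}{\sqrt{1+a^2}}v_i+\frac{a}{\sqrt{1+a^2}}v_j$ (where $a=\la_i(v_j)$) and computing $\phi$ of it to be $\sqrt{1+a^2}>1$, whereas you run the first-order (variational) version of that same rotation; both exploit the maximality of $\phi$ at $\xi$ over the Grassmannian in the $v_i$--$v_j$ plane, and the dimension count finishing the identification of $W$ is identical.
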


                      \begin{proof}
                      For $m\geq 2$, 
                      as in the proof of Lemma \ref{hl1},
                      linear maps
                                  $\la_i$ for $i=1,\cdots, m$ 
                                      can be defined.
                                      Now we must have
                          \begin{equation}\label{pair}
                                    \la_i(v_j)=(-1)^{m+i}
                                              \phi
                                              (  v_1\wedge\cdots \wedge \hat v_i\wedge \cdots \wedge v_m \wedge v_j)=\delta_{ij}
                         \end{equation}
                                      for $1\leq i\leq m$ and $1\leq j\leq n$ (not merely for $1\leq j\leq m$).
                                      Otherwise, 
                                      if $ \la_i(v_j)=a\neq 0$ in \eqref{pair}  for some $1\leq i\leq m< j\leq n$,
                                      then 
                                      consider the unit simple $m$-vector 
                                      $$\tilde \xi= v_1\wedge\cdots \wedge \ \left(\frac{1}{\sqrt{1+a^2}} v_i+\frac{a}{\sqrt{1+a^2}} v_j \right)\wedge \cdots \wedge v_m$$
                                      and 
                                      its pairing
                                      $\phi(\tilde \xi)=\sqrt{1+a^2}>1$
                                      contradicting with the assumption that $\phi$ is a calibration form.
                                      
                                     Due to the dimension reason,
                                 $W=\bigcap_{i=1}^m \ker(\la_i)=\text{span}\{v_{m+1}\, \cdots,  v_n\}$.
                                 
                                 When $m=1$, instead of constructing $\la_i$ we determine $W$ using $\phi$ directly and the same kind of arguments apply for the conclusion.
                      \end{proof}

           A key lemma that we establish in \cite{Z12} is the following.
           
              \begin{lem}[Comass control for gluing procedure]\label{CCGP}
                         For any $m$-form $\phi$, positive functions $a$ and $b$, and metrics $g_1$ and $ g_2$,
                         it follows
                                 \begin{equation}\label{gluemetrics}
                               \|\phi\|^*_{ag_1+bg_2}\leq\frac{1}{\sqrt{a^m\cdot \frac{1}{\|\phi\|^{*2}_{g_1}}+b^m\cdot \frac{1}{\|\phi\|^{*2}_{g_2}}}}
                                 \end{equation}
                         where $\frac{1}{0}$ and $\frac{1}{+\infty}$ are identified with $+\infty$ and $0$ respectively.
                         \end{lem}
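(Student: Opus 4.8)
The plan is to reduce the comass inequality to a pointwise statement about a single simple $m$-vector, then invoke the characterization in Lemma \ref{comassf} that $\|\phi\|^*_g = 1/\min\{\|\overrightarrow V\|_g : \phi(\overrightarrow V)=1\}$. Fix a point $x$ and write $a=a(x)$, $b=b(x)$. The first step is to observe that it suffices to bound $\|\overrightarrow V\|_{ag_1+bg_2}$ from below, for each simple $m$-vector $\overrightarrow V$ with $\phi_x(\overrightarrow V)=1$, by the reciprocal of the right-hand side of \eqref{gluemetrics}. Indeed, by Lemma \ref{comassf} the comass on the left is exactly $1/\min\|\overrightarrow V\|_{ag_1+bg_2}$ over that constraint set, so a uniform lower bound on $\|\overrightarrow V\|_{ag_1+bg_2}$ gives the claimed upper bound on the comass.

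The second step is the key pointwise inequality: for a simple $m$-vector $\overrightarrow V = e_1\wedge\cdots\wedge e_m$, the squared mass in the metric $ag_1+bg_2$ is the Gram determinant $\det\big((ag_1+bg_2)(e_i,e_j)\big) = \det\big(a\,(g_1)_{ij} + b\,(g_2)_{ij}\big)$. Here the crucial elementary fact is the Minkowski-type determinant inequality: for positive semidefinite $m\times m$ matrices $P,Q$,
\[
\det(P+Q)^{1/m} \geq (\det P)^{1/m} + (\det Q)^{1/m},
\]
applied to $P = a\,(g_1)_{ij}$ and $Q = b\,(g_2)_{ij}$, which gives $\det(aP_1+bQ_1) \geq \big(a^{m/?}\dots\big)$ — more precisely $\det(a g_1|_{\overrightarrow V} + b g_2|_{\overrightarrow V})^{1/m} \geq a\,(\det g_1|_{\overrightarrow V})^{1/m} + b\,(\det g_2|_{\overrightarrow V})^{1/m} = a\,\|\overrightarrow V\|_{g_1}^{2/m} + b\,\|\overrightarrow V\|_{g_2}^{2/m}$. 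I would then combine this with the inequality relating $\|\overrightarrow V\|_{g_j}$ to the comass constraint: since $\phi_x(\overrightarrow V)=1$, Lemma \ref{comassf} gives $\|\overrightarrow V\|_{g_j} \geq 1/\|\phi\|^*_{x,g_j}$, hence $\|\overrightarrow V\|_{g_j}^{2/m} \geq \|\phi\|_{g_j}^{*\,-2/m}$. Substituting, raising to the $m$-th power and taking the square root yields
\[
\|\overrightarrow V\|_{ag_1+bg_2} \;\geq\; \Big(a\,\|\phi\|_{g_1}^{*\,-2/m} + b\,\|\phi\|_{g_2}^{*\,-2/m}\Big)^{m/2}.
\]

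Hmm — comparing with the target, the right side of \eqref{gluemetrics} is $1/\sqrt{a^m\|\phi\|_{g_1}^{*-2} + b^m\|\phi\|_{g_2}^{*-2}}$, so what I actually need is $\|\overrightarrow V\|_{ag_1+bg_2}^2 \geq a^m\|\phi\|_{g_1}^{*-2} + b^m\|\phi\|_{g_2}^{*-2}$; this follows from the previous display together with the superadditivity $(\alpha+\beta)^m \geq \alpha^m+\beta^m$ for $\alpha,\beta\geq 0$ applied to $\alpha = a\|\phi\|_{g_1}^{*-2/m}$, $\beta = b\|\phi\|_{g_2}^{*-2/m}$. Taking the supremum (equivalently, infimum of $\|\overrightarrow V\|$) over admissible $\overrightarrow V$ then finishes the proof; the degenerate cases where some $\|\phi\|^*_{g_j}$ is $0$ or $+\infty$ are handled by the stated conventions $1/0=+\infty$, $1/\infty=0$, noting that the constraint set is nonempty whenever $\phi_x\neq 0$ and the bound is vacuous when $\phi_x=0$. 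The main obstacle is getting the Minkowski determinant inequality applied correctly to the Gram matrices restricted to the span of $\overrightarrow V$ — one must be careful that these are genuine positive-definite $m\times m$ matrices (the $e_i$ are linearly independent since $\overrightarrow V$ is a nonzero simple vector) and that the homogeneity in $a$ and $b$ comes out as the $m$-th power, which is exactly what produces the $a^m$ and $b^m$ in the denominator.
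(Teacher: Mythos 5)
Your proof is correct, and it takes a recognizably different route from the paper's. The paper does not reproduce the proof of Lemma~\ref{CCGP} here (it refers to~\cite{Z12}), but from the comment preceding Theorem~\ref{nice} one sees that the paper's argument begins as yours does — restrict to simple $m$-vectors $Q$ with $\phi(Q)=1$ via Lemma~\ref{comassf} — and then proceeds by \emph{explicit simultaneous diagonalization} of $g_1|_{\mathrm{span}\,Q}$ and $g_2|_{\mathrm{span}\,Q}$, writing $\|Q\|^2_{ag_1+bg_2}={\tt t}^2\prod_i(a+b\lambda_i)$ and then estimating this product term by term. You instead quote the Minkowski determinant inequality $\det(P+Q)^{1/m}\ge(\det P)^{1/m}+(\det Q)^{1/m}$ for the two Gram matrices and finish with superadditivity $(\alpha+\beta)^m\ge\alpha^m+\beta^m$. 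Both are valid. What the paper's explicit diagonalization buys is flexibility: the same diagonalized expression $\prod_i(1-s+s\lambda_i)$ is what feeds into the strict-convexity argument for the much stronger Theorem~\ref{nice} (and the rigidity discussion in Remark~\ref{when=}), which a black-box Minkowski invocation would not expose. What your route buys is a cleaner and slightly stronger intermediate estimate, namely
\[
\|\overrightarrow V\|_{ag_1+bg_2}^{2/m}\;\ge\;a\,\|\overrightarrow V\|_{g_1}^{2/m}+b\,\|\overrightarrow V\|_{g_2}^{2/m},
\]
from which \eqref{gluemetrics} follows, and it isolates exactly where homogeneity of degree $m$ produces the $a^m$ and $b^m$. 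One small point worth tightening: Minkowski's inequality needs the Gram matrices to be genuinely positive definite, which you correctly note holds because $\overrightarrow V$ is a nonzero simple vector; and the degenerate case $\phi_x=0$ is the only one where $\|\phi\|^*_{g_j}=0$, in which case both sides vanish under the stated conventions, so the statement is indeed vacuously satisfied there.
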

   
   However, an apparent  dissatisfactory is that even with $\|\phi\|^{*2}_{g_1}= 1$ and $\|\phi\|^{*2}_{g_2}= 1$ 
   (except for $m=1$) one cannot draw the conclusion that                           $  \|\phi\|^{*2}_{(1-s)g_1+sg_2}\leq 1$ for $s\in(0,1)$.
   We shall obtain a better lemma regarding this in the next section.

                                  {\ }
                                  
                                  \section{New gluing result for the extension of calibration pairs}\label{S3}
                                      
                                        In this section we explain how to  extend calibration pairs.
                                               Case with some more general situation was studied,
                                               but  we shall  restrict ourselves to the simplest situation.
                                                  Let $(S,o)$  be  an oriented connected compact  submanifold with no boundary and with only one singular point $o$.
                                              Then  the  extension result of calibration pair  means  the following.
                                                 \begin{thm}[Theorem 4.6 in \cite{Z12}]\label{conecal}
          Given $(S,o)$ in $(X,g)$
          with $[S]\neq[0]\in H_m(X; \mathbb{R})$.
           If  $B_{\epsilon}(o;g)\cap S$
          is calibrated by a calibration in 
            some $\epsilon$-ball $(B_\epsilon(o;g),g)$ centered at $o$,
           then
            there exists a metric $\hat g$ coinciding with $g$ on $B_{{\frac{\epsilon}{3}}}(o;g)$ such that
            $S$ can be calibrated  in $(X,\hat g)$ by some calibration.
            \end{thm}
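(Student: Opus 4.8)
The plan is to manufacture a single closed $m$-form $\Phi$ on $X$ together with a metric $\hat g$ for which $(\Phi,\hat g)$ calibrates $S$, while $(\Phi,\hat g)$ still coincides with $(\phi,g)$ on $B_{\epsilon/3}(o;g)$; it is convenient to first arrange the matching on the slightly larger ball $B_{\epsilon/2}(o;g)$. The first half of the argument builds $\Phi$. One wants a \emph{globally closed} $m$-form on $X$ which equals $\phi$ on $B_{\epsilon/2}(o;g)$ and, crucially, satisfies $\Phi_x(\overrightarrow{S}_x)>0$ at every regular $x\in S$, where $\overrightarrow{S}_x$ denotes the oriented $g$-unit tangent $m$-vector. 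A naive cut-off $\chi\phi$ fails to be closed, and the mechanism for repairing it while keeping positivity along all of $S$ is the de Rham--Sullivan duality between closed forms and cycles: since $[S]\neq[0]$ in $H_m(X;\mathbb{R})$, the convex cone of positive structure currents carried by $S$ misses the subspace of boundaries, so a Hahn--Banach separation produces a closed $m$-form positive on every $\overrightarrow{S}_x$; adjoining the consistent linear constraint that the form agree with $\phi$ near $o$ leaves the separation valid and yields the desired $\Phi$. The condition $[S]\neq[0]$ is genuinely needed here and not a mere convenience: a closed $m$-form whose restriction to $S$ is an everywhere-positive top form has $\int_S\Phi>0$, which is impossible if $S$ bounds. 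Because $\phi$ already calibrates $S\cap B_\epsilon$ for $g$, one has $\Phi_x(\overrightarrow{S}_x)=\phi_x(\overrightarrow{S}_x)=1$ on $S\cap B_{\epsilon/2}$, and, running the argument of Lemma \ref{L2} pointwise (its proof uses only the comass bound at the point), the Harvey--Lawson complement $W_x$ of Lemma \ref{hl1} for $\phi_x$ is exactly $(T_xS)^{\perp_g}$ there. Write $\vartheta:=\Phi(\overrightarrow{S})>0$ on $S$, so that $\vartheta\equiv1$ on $S\cap B_{\epsilon/2}$.

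The second half builds $\hat g$. Along the compact manifold-with-boundary $S\setminus B_{\epsilon/3}(o;g)$, apply at each $x$ Lemma \ref{hl1} to $\vartheta_x^{-1}\Phi_x$ with $\xi=\overrightarrow{S}_x$ to get the canonical complement $W_x$ to $T_xS$, and then, in the spirit of Lemma \ref{hl2} and Remark \ref{imp}, prescribe $\hat g$ at $x$ as the $\hat g$-orthogonal sum of $\vartheta_x^{2/m}\,g|_{T_xS}$ on $T_xS$ and $C^2$ times a fixed inner product on $W_x$, with $C$ a large constant. The factor $\vartheta_x^{2/m}$ makes the $\hat g$-unit tangent $m$-vector of $S$ equal to $\vartheta_x^{-1}\overrightarrow{S}_x$, so $\Phi$ evaluates to $1$ on it, while $C$ large forces $\|\Phi_x\|^*_{\hat g}=1$; thus $\Phi$ calibrates $S$ at $x$. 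Since $\vartheta\equiv1$ and $W_x=(T_xS)^{\perp_g}$ on $S\cap B_{\epsilon/2}$, this prescription reduces to $g$ there apart from the factor $C^2$ in the normal directions, which must be interpolated back down to $1$ over the collar $B_{\epsilon/2}\setminus B_{\epsilon/3}$; keeping $\|\Phi\|^*$ at most $1$ through this metric interpolation for $m\ge2$ is precisely the difficulty recorded after Lemma \ref{CCGP}, and it is exactly what the improved gluing lemma of \S \ref{S3} is designed to settle. Finally $\hat g$ is extended off $S$ by multiplying $g$ by a large, position-dependent conformal factor that equals $1$ near $\partial B_{\epsilon/3}(o;g)$ and near $S$ and grows elsewhere so as to push $\|\Phi\|^*$ below $1$; the tube regime around $S$ and the conformal regime are patched together by Lemma \ref{CCGP} and the \S \ref{S3} lemma so that $\|\Phi\|^*_{\hat g}\le1$ holds on all of $X$. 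The resulting $\Phi$ is closed with comass at most $1$ everywhere and equal to $1$ along $S$, hence a calibration by Definition \ref{calibration}; it calibrates $S$; and $(\Phi,\hat g)=(\phi,g)$ on $B_{\epsilon/3}(o;g)$, as claimed.

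The main obstacle is the closed-form construction in the first half: one must extract a form that is \emph{pointwise} positive along the whole of $S$ — not merely of nonzero period — while being rigidly pinned to $\phi$ near the singular point. This is where the homological hypothesis $[S]\neq[0]$ is indispensable and where a de Rham--Sullivan separation argument is required. Once such a $\Phi$ is available, the metric surgery is essentially bookkeeping of comasses through several metric interpolations, a task for which the new gluing result of \S \ref{S3} is tailor-made (the remark following Lemma \ref{CCGP} already shows that the earlier estimate alone does not suffice once $m\ge2$).
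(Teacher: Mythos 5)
Your plan for the second half (the metric surgery) is essentially the paper's: prescribe $\vartheta^{2/m}g$ tangentially and a large $C^2$ times the Harvey--Lawson complement $W$ normally, then interpolate back to $g$ over a collar using Theorem~\ref{nice}, with Remark~\ref{when=} guaranteeing that $S$ stays calibrated through the interpolation. The role of $[S]\neq[0]$ is also correctly identified as indispensable. But the first half, which you rightly flag as the crux, is where you take a genuinely different route and where there is a real gap. The paper does \emph{not} first manufacture a global closed form by separation; it builds $\Phi$ on a tubular neighborhood $\Xi=B_{\epsilon/2}\cup\Gamma$ of $S$ by gluing at the level of potentials, $\Phi = d\bigl(\tau\psi + (1-\tau)\pi^*(\psi|_S)\bigr)$, where $\tau\equiv 1$ near $B_{\epsilon/2}$ and $\tau\equiv 0$ near the outer part of $\Gamma$. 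This gluing is exact by construction, agrees with $\varphi$ on $B_{\epsilon/2}$, and $-$ crucially $-$ restricts on $T_xS$ to $d_S(\psi|_S)=\varphi|_S$ for every $x\in S\cap\Gamma$, so $\Phi(\overrightarrow Z)\equiv 1$ along $S$ automatically. The global extension of the closed form (and of the metric) to all of $X$ is a \emph{final} step, and that is where $[S]\neq 0$ is used, via the restriction $H^m(X;\mathbb R)\to H^m(\Xi;\mathbb R)\cong\mathbb R$ hitting the class of $\Phi$.

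The gap in your version is the sentence asserting that one may ``adjoin the consistent linear constraint that the form agree with $\phi$ near $o$'' to the Sullivan/Hahn--Banach separation and still produce a closed $\Phi$ positive along all of $S$. The Sullivan argument separates the cone of structure cycles from the subspace of boundaries and yields a closed form that is positive on $S$; it does not hand you an element of the much smaller affine subspace $\{\Phi\in Z^m(X):\Phi|_{B_{\epsilon/2}}=\phi\}$ that still lies in the open positivity cone, and it is not clear a priori that this intersection is nonempty. The obvious repair $-$ take a Sullivan form $\Psi=d\eta$ on $B_\epsilon$ and replace it by $\Psi + d(\chi(\psi-\eta))$ with $\chi$ a cutoff, or rescale $\Psi$ $-$ introduces a term $d\chi\wedge(\psi-\eta)$ that is not sign-controlled along $S$ in the transition annulus, precisely because $\psi|_S$ and $\eta|_S$ cannot be made to vanish (their exterior derivatives on $S$ are the volume form). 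The paper's potential-level gluing sidesteps this by making the gluing happen \emph{between} two primitives whose restrictions to $S$ agree identically with $\psi|_S$, so the transition term vanishes on $T_xS$ by design. To salvage your architecture you would either need to prove the affine separation claim (showing that the constrained affine slice is nonempty and meets the open cone), or import the paper's explicit gluing formula for the transition region $\Omega$ before appealing to any separation argument; as written, the step is an assertion rather than a proof.
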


                                               Actually, we can simplify the gluing procedures in \cite{Z12}
                                               based on a satisfactory new result  in \S 3.1
                                               which can greatly improve the key control Lemma \ref{CCGP}.

                                           \subsection{A new gluing result.}
                                          In this subsection, we assume that 
                                          $\phi(\neq 0)$ is an $m$-form with constant coefficients in $\R^n$
                                        and
                                        that
                                          $\|\phi\|^{*2}_{g_1}\leq 1$ and $\|\phi\|^{*2}_{g_2}\leq 1$ for two metrics $g_1, g_2$ (with constant coefficients) on the vector space $\R^n$.

                                   A naive question is whether or not 
                                    the comass of $\phi$ remains no larger than one for the family of metrics $g(s)=(1-s)g_1+sg_2$.
                                    We confirm this affirmatively.
           
                                    \begin{thm}[Nice control for gluing metrics]\label{nice}
                                    Assume that $\phi$, $g_1,\, g_2$ and $g(s)$ are given as above.
                                    Then $\|\phi\|^*_{g(s)}\leq 1$.
                                    \end{thm}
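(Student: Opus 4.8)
The plan is to reduce the claim to a pointwise statement about simple $m$-vectors and then exploit the variational characterization of comass in Lemma~\ref{comassf}. Fix $s\in(0,1)$ and set $g=g(s)=(1-s)g_1+sg_2$. Since $\phi$ has constant coefficients and the $g_i$ are constant, it suffices to bound $\phi(\xi)$ from above by $\|\xi\|_g$ for every simple $m$-vector $\xi$; equivalently, by Lemma~\ref{comassf}, it suffices to show that whenever $\phi(\xi)=1$ for a simple $m$-vector $\xi=v_1\wedge\cdots\wedge v_m$, one has $\|\xi\|_g\geq 1$. So I would fix such a $\xi$ with $\phi(\xi)=1$ and try to prove $\|\xi\|^2_{g}\geq 1$ directly from $\|\xi\|^2_{g_1}\geq 1$ and $\|\xi\|^2_{g_2}\geq 1$ (these last two inequalities being exactly what $\|\phi\|^*_{g_i}\leq 1$ gives us via Lemma~\ref{comassf}).

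The core of the argument is then a convexity statement for the function $s\mapsto \|\xi\|^2_{g(s)}$ where $\xi$ is a \emph{fixed} simple $m$-vector and $g(s)=(1-s)g_1+sg_2$ an affine path of inner products. The key step is to show this function is convex in $s$; if so, then since its values at the endpoints $s=0,1$ are both $\geq 1$, the minimum over $[0,1]$ is \emph{not} automatically $\geq 1$ from convexity alone — so convexity is the wrong sign. Instead I would aim for the opposite: show that $s\mapsto 1/\|\xi\|^2_{g(s)}$, or better $s\mapsto \|\xi\|^2_{g(s)}$ itself, behaves well enough that the endpoint bounds propagate. Concretely, write $\|\xi\|^2_{g(s)}=\det\big((1-s)G_1+sG_2\big)$ where $G_i$ is the Gram matrix of $v_1,\dots,v_m$ with respect to $g_i$. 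The function $s\mapsto \det\big((1-s)G_1+sG_2\big)$ on positive-definite matrices is log-concave along line segments (Ky Fan / Minkowski determinant inequality), hence $\|\xi\|^2_{g(s)}\geq \big(\|\xi\|^2_{g_1}\big)^{1-s}\big(\|\xi\|^2_{g_2}\big)^{s}\geq 1^{1-s}\cdot 1^{s}=1$, which is exactly what we want.

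So the main steps, in order, are: (1) reduce via Lemma~\ref{comassf} to showing $\|\xi\|_{g(s)}\geq 1$ for every simple $m$-vector $\xi$ normalized by $\phi(\xi)=1$; (2) recognize $\|\xi\|^2_{g(s)}$ as the determinant of the Gram matrix $(1-s)G_1+sG_2$ of a fixed frame spanning $\xi$; (3) invoke the Minkowski determinant inequality (equivalently log-concavity of $\det$ along segments in the positive-definite cone) to get $\det\big((1-s)G_1+sG_2\big)\geq (\det G_1)^{1-s}(\det G_2)^{s}$; (4) use $\det G_i=\|\xi\|^2_{g_i}\geq 1$ to conclude $\|\xi\|_{g(s)}\geq 1$, hence $\phi(\xi)\leq\|\xi\|_{g(s)}$ for all simple $\xi$, i.e.\ $\|\phi\|^*_{g(s)}\leq 1$. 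The one place requiring care — and the step I expect to be the main obstacle to write cleanly — is step~(3): one must make sure the Gram matrices $G_i$ of the \emph{same} frame $v_1,\dots,v_m$ are used (the frame spans $V=\mathrm{span}\,\xi$ but need not be orthonormal for either $g_i$), that both are positive definite (immediate, since $g_1,g_2$ are genuine metrics), and that the Minkowski/log-concavity inequality is applied with the correct normalization so that the exponent is the determinant of the $m\times m$ Gram matrix rather than its $m$-th root; alternatively one can cite Lemma~\ref{CCGP} reinterpreted, but the determinant route is the most self-contained.
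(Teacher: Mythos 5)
Your proof is correct, and it takes a genuinely different (though closely related) route from the paper's. Both arguments make the same initial reduction via Lemma~\ref{comassf} to showing $\|\xi\|^2_{g(s)}\geq 1$ for every simple $m$-vector $\xi$ with $\phi(\xi)=1$. Where they diverge is in the convexity input. The paper explicitly constructs a $g_1\big|_{\mathrm{span}\,\xi}$-orthonormal frame that simultaneously diagonalizes $g_2\big|_{\mathrm{span}\,\xi}$ with eigenvalues $\lambda_1,\dots,\lambda_m$, writes $\|\xi\|^2_{g(s)}=\mathtt{t}^2\prod_i(1-s+s\lambda_i)$, and then invokes (and verifies by a Hessian computation) the strict convexity of $F(X_1,\dots,X_m)=(\prod X_i)^{-1}$ on the open first quadrant. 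You instead observe that $\|\xi\|^2_{g(s)}=\det\big((1-s)G_1+sG_2\big)$ for the Gram matrices $G_i$ of any fixed frame spanning $\xi$, and invoke log-concavity of $\det$ on the positive-definite cone, giving $\det((1-s)G_1+sG_2)\geq(\det G_1)^{1-s}(\det G_2)^{s}\geq 1$. Your route is shorter, needs no simultaneous diagonalization and no Hessian, and leans on a named classical inequality; the paper's route is self-contained and its explicit eigenvalue form is what makes the equality analysis in Remark~\ref{when=} immediate (equality forces $\lambda_i=1$ for all $i$). One small correction to your own worry in step~(3): there is no normalization subtlety — the relevant statement is simply concavity of $A\mapsto\log\det A$, so $\det((1-s)G_1+sG_2)\geq(\det G_1)^{1-s}(\det G_2)^{s}$ holds as written, with the determinants of the $m\times m$ Gram matrices and no $m$-th roots appearing; the $m$-th-root Minkowski form implies this via a further AM--GM step but is not needed directly.
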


                                     \begin{proof}
                                     There are three steps and the first two are the same as in the proof of Lemma \ref{CCGP} in \cite{Z12}.
                                     Let us first  observe that,
                                     according to Lemma \ref{comassf},
                                     we only need to focus on the smallest size of elements in
                                     \begin{equation}\label{Qset}
                                     \{\text{simple\ $m$-vector\ } Q \text{ at } 0 \text{ with}\ \phi(Q)=1\}
                                     \end{equation}
                                     under metric $g(s)$.
                                     
                                     The second is that, for each $P=span\, Q$,
                                     there exists an orthonormal basis $\{e_1, \cdots, e_m\}$ with respect to $g_1\big|_P$
                                     such that
                                     $g_2\big|_P$ is diagonalized as $diag\{\la_1,\cdots, \la_m\}$ under the basis and
    \begin{equation*}
          Q={\tt t}\,         e_1
                        \wedge\cdots\wedge 
                       e_m
     \end{equation*}
     where $\tt t$ is a nonzero real number ($|t|\geq 1$ since $\|\phi\|^*_{g_1}\leq 1$).
                     Hence, 
                                    \begin{equation} \label{Qn2}
                                     T(Q;s)\triangleq
                                             \left(
                                             \|Q\|_{g(s)}
                                             \right)^2
                                             =
                                            {{\tt t}^2} (1-s+s\la_1)\cdots (1-s+s\la_m).
                                               \end{equation}
                                                         By the pointwise assumptions  $\|\phi\|^*_{g_1}\leq 1$ and  $\|\phi\|^*_{g_2}\leq 1$,
                                                         it  follows that
                                                         both   
                                                         $ T(Q;0)$
                                                         and $ T(Q;1)$
                                                         are no less than one.

                                                                      Lastly, let us
                                                                      consider
                                                                      the function $F(X_1,\cdots, X_m)=(\Pi_{i=1}^m X_i)^{-1}$.
                                                                      A pleasant property of $F$
                                                                      is that it is strictly convex over the interior of the  first quadrant of $\R^m$.
                                                                          This can be checked very easily as
                                            \[                              \begin{split}
                                                                          \text{Hess}\, F
                                                                          &=F(X_1, \cdots, X_m)
                                                                          \cdot
                                                                         \left[
                                                                         \begin{array}{cccc}
\frac{2}{X_1^2}              & \frac{1}{X_1X_2}& \cdots &  \frac{1}{X_1X_m}\\
 \frac{1}{X_1X_2} & \frac{2}{X_2^2}   & \cdots &  \frac{1}{X_2X_m}\\
& \cdots & &\\
\frac{1}{X_1X_m}  & \frac{1}{X_2X_m} & \cdots &  \frac{2}{X_m^2}
\end{array}
                           \right]
                           \\
                          & =F(X_1,\cdots, X_m)\cdot \left\{diag\left(\frac{1}{X_1^2},\cdots, \frac{1}{X_m^2}\right)
                           +
                          {
                           \left(\frac{1}{X_1},\cdots, \frac{1}{X_m}\right)^T\left(\frac{1}{X_1},\cdots, \frac{1}{X_m}\right)
                           }
                           \right\}
                           \end{split}
                                           \]      
                                           is strictly positive.                       

                                                                          By choosing $X_i(s)=(1-s)1+s\la_i$,
                                                                         we have
                                                                          $F(X_1(s),\cdots, X_m(s)) =\frac{{\tt t}^2}{T(Q;s)}$
                                                                          and 
                                                                 \begin{equation}\label{good}
                                                                          \begin{split}
                                                                          F(X_1(s),\cdots, X_m(s)) 
                                                                                 &\leq 
                                                                                        (1-s) F(1,\cdots, 1)+ sF(\la_1,\cdots, \la_m)
                                                                                        \\
                                                                                        &=
                                                                                        \frac{ (1-s){\tt t}^2}{T(Q;0)}
                                                                                        +\frac{s{\tt t}^2}{T(Q;1)}
                                                                                        \\
                                                                                        & \leq
                                                                                        (1-s){{\tt t}^2}+ s{{\tt t}^2}
                                                                                        ={{\tt t}^2}.
                                                                          \end{split}
                                                               \end{equation}
                                                                       Therefore, we get $ T(Q;s)\geq 1$ for all $s\in[0,1]$
                                                                       and thus $\|\phi\|^*_{g(s)}\leq 1$.
                                     \end{proof}
           \begin{rem}\label{when=}
           If  for some $Q$ we have $\|Q\|_{g_1}= \|Q\|_{g_2}=1$,
           then the strict convexity
            leads to strict inequality $ \|Q\|_{g(s)}<1$ for $s\in(0,1)$
           unless 
           $\la_i=1$ for all $1\leq i\leq m$
           which implies
           $ \|Q\|_{g(s)}\equiv 1$ for $s\in [0,1]$.
           In particular, this tells that $P=span \, Q$ is calibrated by $\phi$ with respect to $g(s)$ for some $ s\in (0,1)$
           if and only if 
           $P$ is calibrated by $\phi$ with respect to both $g_1$ and $g_2$.
           The latter in fact forces $P$ to be calibrated by $\phi$ with respect to $g(s)$ for all $ s\in (0,1)$.
           \end{rem}

         For nonzero $m$-form $\phi$ and $g(s)=(1-s)g_1+sg_2$,
          Lemma \ref{CCGP}  means that
            $$
            \frac{1}{\|\phi\|^{*2}_{g(s)}}
                 \geq 
                 \frac{ (1-s)^m}{\|\phi\|^{*2}_{g_1}}
                 +
                 \frac{s^m}{\|\phi\|^{*2}_{g_2}}.
            $$
           In fact, according to the above proof, an improved control is the following.
           \begin{cor}\label{sqineq}
           As given in the above, we have
       $ {\|\phi\|^{*2}_{g(s)}}
           \leq (1-s)\|\phi\|^{*2}_{g_1} 
                      +
                      s\|\phi\|^{*2}_{g_2}
                      $.
           \end{cor}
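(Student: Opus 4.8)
The plan is to replay the three steps of the proof of Theorem \ref{nice} verbatim, but to keep the two numbers $A:=\|\phi\|^{*2}_{g_1}$ and $B:=\|\phi\|^{*2}_{g_2}$ throughout instead of first normalizing them to be $\leq 1$. Note that $A$ and $B$ are finite (a constant-coefficient form has finite comass, being a maximum of a continuous function over the compact set of unit simple $m$-vectors) and strictly positive (since $\phi\neq 0$ and the decomposable $m$-vectors span $\Lambda^m\R^n$, $\phi$ is nonzero on some simple $m$-vector). By Lemma \ref{comassf}, and because $g(s)=(1-s)g_1+sg_2$ is again positive definite, $\|\phi\|^{*2}_{g(s)}$ equals the reciprocal of the minimum of $\|Q\|^2_{g(s)}$ over all simple $m$-vectors $Q$ with $\phi(Q)=1$; hence it suffices to show that every such $Q$ satisfies $\|Q\|^2_{g(s)}\geq\big((1-s)A+sB\big)^{-1}$.

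I would fix such a $Q$, set $P=\operatorname{span}Q$, and simultaneously diagonalize the two inner products restricted to $P$: choose a $g_1|_P$-orthonormal basis $e_1,\cdots,e_m$ of $P$ with $g_2|_P=\operatorname{diag}(\la_1,\cdots,\la_m)$, all $\la_i>0$. Then $Q=\mathbf t\,e_1\wedge\cdots\wedge e_m$ for some real $\mathbf t\neq 0$, so
\[
\|Q\|^2_{g(s)}=\mathbf t^2\prod_{i=1}^m\big((1-s)+s\la_i\big),\qquad \|Q\|^2_{g_1}=\mathbf t^2,\qquad \|Q\|^2_{g_2}=\mathbf t^2\prod_{i=1}^m\la_i .
\]
Applying the convexity of $F(X_1,\cdots,X_m)=\big(\prod_{i}X_i\big)^{-1}$ on the open first quadrant --- already established in the proof of Theorem \ref{nice} --- at the points $X_i(s)=(1-s)\cdot 1+s\la_i$ gives
\begin{align*}
\frac{\mathbf t^2}{\|Q\|^2_{g(s)}}=F\big(X_1(s),\cdots,X_m(s)\big)&\leq (1-s)F(1,\cdots,1)+sF(\la_1,\cdots,\la_m)\\
&=(1-s)\frac{\mathbf t^2}{\|Q\|^2_{g_1}}+s\frac{\mathbf t^2}{\|Q\|^2_{g_2}}.
\end{align*}
Cancelling $\mathbf t^2>0$ and using Lemma \ref{comassf} once more, now in the form $\|Q\|^{-2}_{g_i}\leq\|\phi\|^{*2}_{g_i}$, yields $\|Q\|^2_{g(s)}\geq\big((1-s)A+sB\big)^{-1}$. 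Since this lower bound is the same constant for every admissible $Q$, the minimum of $\|Q\|^2_{g(s)}$ over such $Q$ is $\geq\big((1-s)A+sB\big)^{-1}$, whence $\|\phi\|^{*2}_{g(s)}\leq(1-s)\|\phi\|^{*2}_{g_1}+s\|\phi\|^{*2}_{g_2}$.

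I do not anticipate a genuine obstacle: the corollary is precisely the quantitative, ``with-constants'' form of Theorem \ref{nice}, so the only care required is bookkeeping --- keeping $A$ and $B$ positive and finite so that the reciprocals are legitimate, and invoking Lemma \ref{comassf} in the correct direction twice (first to reduce the statement to one about all admissible $Q$, then to bound $\|Q\|^{-2}_{g_i}$ by $\|\phi\|^{*2}_{g_i}$). As observed in Remark \ref{when=}, strict convexity is not needed here; plain convexity of $F$ suffices, and the estimate is visibly sharp at $s=0$ and $s=1$.
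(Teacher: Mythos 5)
Your proposal is correct and follows essentially the same route as the paper: diagonalize $g_1,g_2$ on $\operatorname{span}Q$, apply the convexity of $F(X_1,\dots,X_m)=(\prod X_i)^{-1}$ exactly as in equation \eqref{good}, and then use Lemma \ref{comassf} to bound $\|Q\|^{-2}_{g_i}$ by $\|\phi\|^{*2}_{g_i}$. The only cosmetic difference is that you establish the uniform bound $\|Q\|^2_{g(s)}\geq\bigl((1-s)A+sB\bigr)^{-1}$ for every admissible $Q$ and then take the infimum, whereas the paper picks the minimizing $Q$ for $g(s)$ first; both orderings are valid and yield the same estimate.
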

           \begin{proof}
           As $\phi$ is not zero,
           there exists  $Q$ in \eqref{Qset} which realizes $\|\phi\|^{*2}_{g(s)}=\frac{1}{\|Q\|^2}$.
           Then, by dividing $\tt t^2$ on both sides of \eqref{good}
           and Lemma \ref{comassf},
           we get
           \begin{equation}\label{2ineq}
            {\|\phi\|^{*2}_{g(s)}}
            \leq 
             \frac{1-s}{T(Q;0)}
             +
              \frac{ s}{T(Q;1)}
          \leq (1-s)\|\phi\|^{*2}_{g_1} 
                      +
                      s\|\phi\|^{*2}_{g_2}.
           \end{equation}
           Moreover, one can also gain the strict inequality for all $s\in(0,1)$ if some $\la_i\neq1$.
           \end{proof}

           It is worth mentioning that Corollary \ref{sqineq} indicates that $\|\phi\|^{*2}_{(\cdot)}$ is a convex functional over the space of metric space.
           If one uses $F^a$ where $a>0$, then we can extend the result accordingly.
           
           \begin{cor}\label{expaineq}
           As assumed in the above, we have
       $ {\|\phi\|^{*2a}_{g(s)}}
           \leq (1-s)\|\phi\|^{*2a}_{g_1} 
                      +
                      s\|\phi\|^{*2a}_{g_2}
                      $.
           \end{cor}
      \begin{proof}
      Notice that, for every $a>0$,
             $F^a$ is strictly convex due to the same kind argument.
                                           So, for $Q$ in \eqref{Qset} realizing $\|\phi\|^{*2}_{g(s)}=\frac{1}{\|Q\|^2}$, besides \eqref{2ineq} we also have
                                        $$
            {\|\phi\|^{*2a}_{g(s)}}
            =\frac{F^a(X_1(s), \cdots, X_m(s))}{{\tt t}^{2a}}
            \leq 
             \frac{1-s}{T(Q;0)^a}
             +
              \frac{ s}{T(Q;1)^a}
          \leq (1-s)\|\phi\|^{*2a}_{g_1} 
                      +
                      s\|\phi\|^{*2a}_{g_2}.
          $$
              \end{proof}
           
           Therefore,  $\|\phi\|^{*2a}_{(\cdot)}$ is a convex functional over the space of metrics for all $a>0$.
           
           {\ }

                             \subsection{Simplified proof of Theorem \ref{conecal}}\label{spf}
                             Suppose that
                                        $\phi(\neq 0)$ is a calibration $m$-form with constant coefficients in $(\R^n, g_E)$.
                                          Namely, $\|\phi\|^*_{g_E}\leq 1$.
                                          
                                          Let $\xi\in \Lambda^m \mathbb{R}^n$ be a unit simple $m$-vector with
             $V = span\ \xi$.
                                     Suppose that $1\geq \phi(\xi)=\vartheta>0$.
                                     Then there exists a unique subspace $W$ in Lemma \ref{hl1}.
                                     By Lemma \ref{hl2} and Remark \ref{imp} 
                                           we have
                                           $\|\phi\|^*_{g'}=\phi(\xi)=\vartheta$ with respect to some $g'$.
                                           Hence, by Lemma \ref{nice}
                                           a better gluing for $g_E$ and $g'$
                                           is $(1-s)g_E+sg'$
                                          as
                                           $\|\phi\|^*_{(1-s)g_E+sg'}\leq 1$
                                           for $s\in[0,1]$.

                For an application to show how efficient  the result is, we  give a simplified proof of Theorem \ref{conecal} according to the following figure.
                              \begin{figure}[h]
\begin{center}
\includegraphics[scale=0.55]{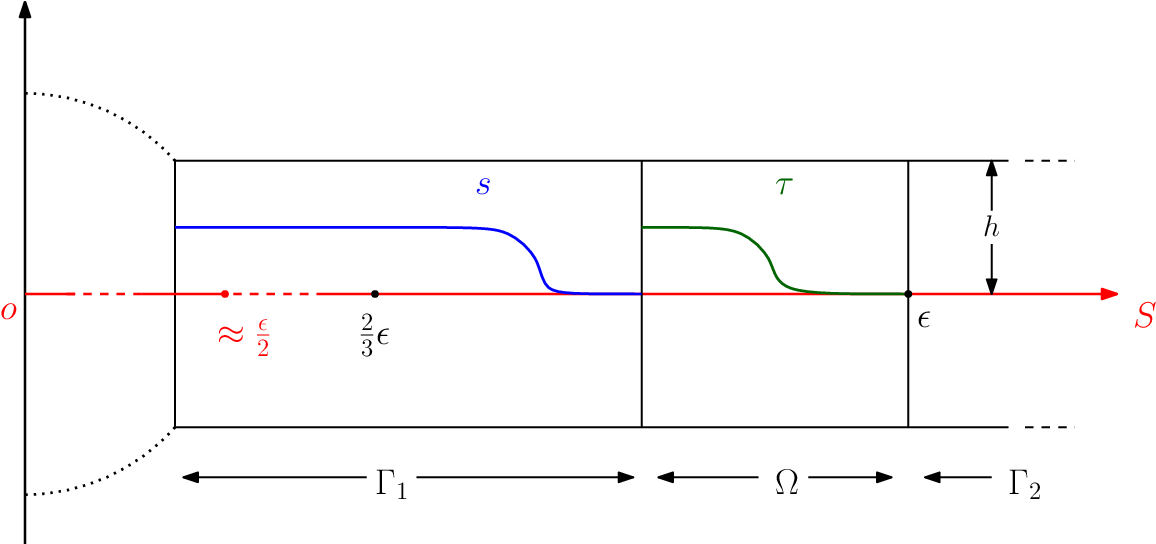}
\end{center}
\end{figure}
                              Let $\bold r$ be the induced distance from $o$ in $S$ by the a priori metric $g$.
                              Then regions $\Gamma_1,\, \Omega$ and $\Gamma_2$ with height $h$ here can be defined by certain ranges of $\bold r$.
                              
                              Note that the nearest projection $\pi$ to $S\cap \left(\Gamma_1\cup \Omega\cup \Gamma_2\right)$ can be well-defined on $\Gamma \triangleq \Gamma_1\cup \Omega\cup \Gamma_2$ when $h$ is small.
                              Denote the local calibration form in assumption by $\varphi$.
                              Since the region $\Omega$ can deform to a set of dimension strictly less than $m$,
                              it follows that $\varphi=d\psi$ on $\Omega$ for some smooth form $\psi$ of degree $m-1$.
                              The gluing of forms in \cite{Z12}
                     \begin{equation}\label{phiglue}
 \Phi\triangleq d(\tau\psi+(1-\tau)\pi^*(\psi|_S)),
\end{equation}
 where $\tau=\tau\circ \pi $ decreases from one to zero,
         provides a smooth form         on 
         $\Xi\triangleq B_{\frac{\epsilon}{2}}(o;g)\cup \Gamma.$
                              A remark here is that when $\epsilon$ is small enough 
                              $B_{\frac{\epsilon}{2}}(o;g)\cap S$ is approximately the set of points on $S$ which have $\bold r=\frac{\epsilon}{2}$, e.g. see \cite{LS} about the uniqueness of tangent cone at $o$. 
                             
                             Denote the oriented unit horizontal $m$-vector by $\overrightarrow Z$ in $\Gamma$
                             with the property that $\overrightarrow Z|_S$ agrees with the orientation of $S$
                             and span{$\overrightarrow Z$} is perpendicular to the fiber directions of $\pi$ with respect to $g$.
                             Clearly, $\overrightarrow Z$ is smooth and so is $\Phi(\overrightarrow Z)$.
                             Shrink $h$ if necessary to make $\Phi(\overrightarrow Z)>0$ in $\Gamma$.
                             Let 
                             $Z=\big(\Phi(\overrightarrow Z)\big)^{-1}        \overrightarrow Z.$
                             Then $\Phi(Z) \equiv 1$ in $\Gamma$.
                             Set 
                             $\tilde g=\big(\Phi(\overrightarrow Z)\big)^{\frac{2}{m}}g    $  in $\Gamma$
                             and
                             then $\|Z\|_{\tilde g}=1$.
                             By Lemmas \ref{hl1} and \ref{hl2} for $V=span\, Z$ and the compactness of $\bar \Gamma$,
                             one can get a smooth metric
                             $g'=\tilde g\big|_V+C^2\tilde g\big|_{W}$ with large $C^2$
                             such that 
                             $\|\Phi\|^*_{g'}\equiv 1$ in $\Gamma$.
                             The smoothness of $g'$ is due to the smoothness of $\Phi$ and $Z$
                             which induces the smoothness of $W$ and that of $\tilde g\big|_{W}$.
                             
                             Now we do the metric gluing for $g$ and $g'$ in $\Gamma_1$ using smooth function ${\bold s}={\bold s}({\bold r\circ \pi})$ which decreases from one to zero.
                             Since $\Phi=\varphi$ in $\Gamma_1$, we have $\|\Phi\|^*_{g}\leq 1$ in $\Gamma_1$.
                             By applying Theorem \ref{nice} pointwise,
                             we obtain $\|\Phi\|^*_{{\bold s}g+(1-{\bold s})g'}\leq 1$ in $\Gamma_1$.
                             Note that ${{\bold s}g+(1-{\bold s})g'}$ trivially extends to  a smooth metric $\check g$ in $\Xi$.
                             
                             As a result, we get a smooth calibration pair  $\Phi$ and $\check g$ in $\Xi$. 
                              Since $\Phi(\overrightarrow Z)=1$ in $S\cap \Gamma$,
                             it follows that  $g'$ and $g$ induce the same metric of the submanifold $S\cap \Gamma$.
                               Hence, by Remark \eqref{when=} for $Q=Z=\overrightarrow Z$ at every point in $S\cap \Gamma_1$,
                               we can conclude that $S$ is calibrated by $\Phi$ with respect to $\check g$.

                               A further extension from the calibration pair $(\Phi, \check g)$ on $\Xi$ to a global calibration pair $(\hat \Phi, \hat g)$ on $X$ 
                              and meanwhile making $S$ calibrated by $\hat \Phi$ can be made based  on the non-vanishing homology assumption.
                              One can follow the  step in \cite{Z12} and we omit it here.
                              
                                \begin{rem}\label{rGamma}
                   In fact by Lemma \ref{L2} the distribution of $W$ pointwise in $S\cap \Gamma_1$ and in $\Gamma_2$ is exactly fiber directions of $\pi$. 
                            \end{rem}

                            \begin{rem}\label{locF}
                           The method can also be applied to coflat calibration form with singular set being $\{o\}$.
                            \end{rem}

                              {\ }
                              
                           \section{Every Lawlor cone supports calibrations singular only at the origin} \label{S4}
                           In this section, we  show that every Lawlor cone can support coflat calibrations which are singular only at the origin of the ambient Euclidean space $(\R^{N+1}, g_E)$.
                           Here by Lawlor cone we mean a minimal cone which can be shown area-minimizing according to Lawlor's curvature criterion (in practice by the c-control or F-control to be explained below).
                           Such a cone naturally supports non-continuous dual calibrations.
                           Based on some analysis in  \cite{z}, we are able to construct some coflat calibrations resolving the discontinuity and  making the singular set exactly be $\{o\}$.
                           Besides,  a shortcut for the realization result in \cite{z} will be explained.
                           
                           First of all let us review some relevant materials from \cite{Law}.
                           \subsection{Introduction to Lawlor's curvature criterion. }\label{41}
                              The field of area-minimizing cones has been successively studied for decades due to their special central  role 
                              in several famous questions in differential geometry and geometric measure theory, e.g. see \cite{fle, de2, A, JS, BdGG, BL,  HL, HS, FK, Ch, Law0, Law, NS}.

                           The theory of calibrations mentioned in \S 2 is an effective way to study these objects.
                           Sometimes certain objects naturally associate with calibrations and further relate to dual area-nonincreasing projections.
                           As in \cite{HS},
                           Hardt and Simons
                           showed that for any given area-minimizing regular hypercone $C$ in $\R^{N+1}$
                           there exists a dilation-invariant foliation of the entire space $\R^{N+1}$ by minimal hypersurfaces with $C$   being the only singular leave.
                           Hence, the oriented unit volume form of the foliation induces a coflat calibration singular at the origin $o$ and also possibly in $C\sim o$.
                           Furthermore, the normal vector field integrates a dilation-invariant foliation by curves along which  an area-nonincreasing projection to the cone $C$ is then obtained.
                          
                           In  \cite{Law} Lawlor focused on a similar structure in some suitable angular neighborhood of a minimal cone $C(L^k)$ instead of in the whole space $\R^{N+1}$. 
                           If the boundary of the neighborhood can be mapped to the origin by the projection, 
                           then one can map everything outside the neighborhood to the origin. 
                           Thus an area-nonincreasing projection is gained and the area-minimality of the cone is proved.
                           
                           To be more precise, \cite{Law} introduces a  foliation by dilation-invariant curves 
                           and for further simplicity uses a uniform symmetric curve $\gamma$ through $x\in L$,
                           namely, for every $x\in L$
                           and any unit normal vector $v_x$ of $L\subset \mathbb S^N$ at $x$,
                           the curve  $\gamma(x, v_x)$ that Lawlor considered
                           has uniform expression in the polar coordinate
                           $r=r(\th)$.
                           The sufficient condition for the projection along foliation by $\{\R_+\cdot \gamma\}$
                           to be area-nonincreasing is the inequality
                           \begin{equation}\label{ineq}
             \frac{dr}{d\theta}\leq r\sqrt{r^{2k+2}\cos^{2k}\theta 
                                                 \inf_{x\in L, \, v_x\in N_xL}
                                                             \left(
                                                             \det
                                                             \left(
                                                                        \textbf{I}-(\tan\theta) \textbf{h}_{ij}^{v_x}
                                                                        \right)
                                                             \right)^2
                                                             -1
                                                             }
           \end{equation}
             with 
             $r(0)=\|x\|=1$.
 Here 
             $\textbf{h}_{ij}^v$ is the second fundamental form at $x$ for $v$.
             To map the boundary of the region where the projection is defined,
             one wishes to have $\lim_{\theta\uparrow \theta_0}r(\theta)=\infty$ for some vanishing angle $\th_0$.
             To get the narrowest, \eqref{ineq} with equality was studied.
            Set $t=\tan \th$, 
             $h(t)=(r(\th)\cos \th)^{-k-1}$
             and 
             $$p(t)=  \inf_{x\in L, \, v_x\in N_xL}
                                                             \det
                                                             \left(
                                                                        \textbf{I}-t \textbf{h}_{ij}^{v_x}
                                                                        \right)
                                                                        =1+p_2 t^2+\cdots
                                                                     \text{   with negative }p_2.
$$
            Then the equivalent version of \eqref{ineq} in creating calibration form 
            was obtained in \cite{Law}
            \begin{equation}\label{tineq}
                          \left( h(t)-\frac{t}{k+1}h'(t)  \right)^2+\left(\frac{h'(t)}{k+1}\right)^2\leq (p(t))^2,\ \ \ \ \ \ h(0)=1 .
                          \end{equation}
          Note that the candidate calibration  has the nice form  $d(h\psi)$ for some peculiar smooth form $\psi$ (see \cite{Law} for more details)
          and the \eqref{tineq} is just the requirement for comass no larger than one.
  As for the boundary condition, $\lim_{\theta\uparrow \theta_0}r(\theta)=\infty$ exactly corresponds to the situation that $h$ hits the $t$-axis at $t_0=\tan \th_0$ for the first time.
                         Moreover,    once the normal radius $R(L)$ of $L$ in $\mathbb S^N$ is larger than twice the vanishing angle, 
                             one can get a well-defined area-nonincreasing projection to $C(L)$. 
                             
                                 Lawlor's curvature criterion is the following.
                                                \begin{thm}[\cite{Law}]\label{LC}
                                                Given a regular minimal cone $C(L)\subset \R^{N+1}$,
                                                if the vanishing angle $\theta_0$ exists
                                                and $\theta_0\leq \frac{R(L)}{2}$,
                                                then $C(L)$ is area-minimizing.
                                                \end{thm}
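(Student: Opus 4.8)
The plan is to follow Lawlor's strategy \cite{Law}: from the profile curve governed by \eqref{ineq} build a dilation‑invariant foliation of an angular neighbourhood of $C(L)$, read off from \eqref{ineq}/\eqref{tineq} that the projection along the leaves is area‑nonincreasing (equivalently, that the transverse $(k+1)$‑form $d(h\psi)$ of the foliation has comass $\le 1$, i.e.\ is the ``non‑continuous calibration'' of $C(L)$ referred to in \S\ref{S2}), and then conclude area‑minimality by the standard competitor comparison — this last step being the coflat/generalized form of the fundamental theorem of calibrated geometry (cf.\ Theorem \ref{hl} and the remark following it).

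First I would assemble the foliation. Write $k=\dim L$, so $\dim C(L)=k+1$, and let $o$ be the origin of $\R^{N+1}$. For $x\in L$ and a unit normal $v_x\in N_xL\subset T_x\mathbb S^N$, work in the $2$‑plane $\mathrm{span}\{x,v_x\}$ in polar coordinates $(r,\theta)$, and let $r=r(\theta)$ be the solution of \eqref{ineq} with equality and $r(0)=1$; by hypothesis its maximal interval of existence is $[0,\theta_0)$ with $\lim_{\theta\uparrow\theta_0}r(\theta)=\infty$, equivalently $h(t)=(r(\theta)\cos\theta)^{-k-1}$ first meets the $t$‑axis at $t_0=\tan\theta_0$. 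Dilating these curves and ranging over all $(x,v_x)$ produces a family of $1$‑dimensional leaves; I would verify that it foliates the open solid cone $\mathcal N$ over the angular $\theta_0$‑tube of $L$ in $\mathbb S^N$, that the leaf through $q\in\mathcal N$ meets the ray $\R_+\!\cdot x$ where $x$ is the foot point of $q/|q|$, and then define a retraction $\Pi\colon\R^{N+1}\to C(L)$ by following each leaf back to that ray (preserving the dilation factor) on $\mathcal N$ and setting $\Pi\equiv o$ on $\R^{N+1}\setminus\mathcal N$.

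Next I would check the two properties of $\Pi$. Regularity: $\Pi$ is smooth on $\mathcal N$, and since $r(\theta)\to\infty$ as $\theta\uparrow\theta_0$ a unit‑norm point on a leaf with angular coordinate near $\theta_0$ sits at dilation factor tending to $0$, so $\Pi$ extends — indeed Lipschitzly — by $o$ across $\partial\mathcal N$, and it is also Lipschitz at $o$ by scale invariance. Metric property: inequality \eqref{ineq} is, by its very derivation, the condition that the projection along the leaves does not increase $(k+1)$‑dimensional area, equivalently that $\|d(h\psi)\|^*\le 1$ off the singular set; hence $\Pi$ is area‑nonincreasing on $(k+1)$‑rectifiable sets. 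Then for any $R>0$ and any integral current $T'$ supported in $\overline{B_R(o)}$ with $\partial T'=\partial\big(C(L)\llcorner B_R(o)\big)$, the boundary lies on $C(L)$ and is fixed by $\Pi_{\#}$, so $\Pi_{\#}T'$ is a $(k+1)$‑current carried by the $(k+1)$‑dimensional set $C(L)$ with the correct boundary; by the constancy theorem on the connected cone $C(L)$ this forces $\Pi_{\#}T'=C(L)\llcorner B_R(o)$, whence $\mathbf M\big(C(L)\llcorner B_R(o)\big)=\mathbf M(\Pi_{\#}T')\le\mathbf M(T')$. Letting $R\to\infty$ shows $C(L)$ is area‑minimizing.

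The main obstacle is the foliation step of the second paragraph: proving that the dilation‑invariant curves emanating from $L$ in all normal directions do not collide and sweep out an embedded neighbourhood $\mathcal N$ of $C(L)$ on which $\Pi$ is well defined and Lipschitz up to $\partial\mathcal N$. This is exactly where the bound $\theta_0\le R(L)/2$ enters — the factor $2$ reflecting that two symmetric profiles ``aimed at each other'' each sweep angular distance up to $\theta_0$, so non‑collision needs $2\theta_0$ to stay within the normal injectivity radius — and it also controls the transverse geometry near $\partial\mathcal N$ ensuring the Lipschitz estimate for $\Pi$ there. By contrast the comass/area bound is essentially built into the choice of $r(\theta)$, so it is the geometric well‑definedness of the foliation and the regularity of the collapse that require the real work.
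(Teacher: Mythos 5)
Theorem \ref{LC} is quoted verbatim from Lawlor's monograph \cite{Law}; the paper does not prove it, and only sketches Lawlor's retraction strategy in \S\ref{41}. Your reconstruction follows exactly that strategy (dilation-invariant foliation, area-nonincreasing retraction $\Pi$, competitor comparison) and is sound, and you correctly identify the well-definedness of the foliation and the Lipschitz estimate as the technical crux where $\theta_0\le R(L)/2$ enters. One small tightening in the comparison step: to force $\Pi_{\#}T'=C(L)\llcorner B_R(o)$ from the constancy theorem on the non-compact connected manifold $C(L)\setminus\{o\}$, you should also observe that $\Pi$ does not increase the norm (since $r(\theta)\geq r(0)=1$ along the profile), so $\Pi_{\#}T'$ has support in the bounded set $C(L)\cap\overline{B_R(o)}$, whence the constant multiplicity must vanish far out on the cone and thus vanishes identically.
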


                In practice, Lawlor used the control
                                            \begin{equation}\label{control}                                                                                                                                                                              (1-\alpha t)e^{\alpha t}
                                                                       <
                                                                       F(\alpha, t, k+1)
                                                                       \leq
                                                                        p(t)
                                              \end{equation}
                                                      where 
                                                                                                                                                    $
                                                      \alpha=
                                                      \sup_{x\in L, v_x\in N_xL}
                                                      \|     \textbf{h}_{ij}^{v_x}      \|
                                                      $
                                                       and
                                     $
                                                      F(\alpha, t, k+1)=
                                                                 \left(
                                                                 1-\alpha t\sqrt{\frac{k}{k+1}}
                                                                  \right)
                                                                  \left(
                                                                  1+\frac{\alpha t}{\sqrt{k(k+1)}}
                                                                  \right)^{k}
                                   $
                                        to derive two kinds of vanishing angles by employing \eqref{control} to replace $p(t)$
                                                      $$
                                                      \theta_c( k+1, \alpha)
                                                      >
                                                      \theta_F( k+1, \alpha)
                                                      \geq
                                                       \theta_0( C(L))
                                                       .
                                                      $$
                                                      So, if either 
                                                      $\theta_F$
                                                      or 
                                                      $\theta_c$ is less than $\frac{R(L)}{2}$,
                                                      then $C(L)$ is area-minimizing.

                            \subsection{Proof of Theorem \ref{T1}. }\label{42}
                            By a Lawlor cone we mean the smallest vanishing angle $\th_0(C(L))$ is strictly less than $\frac{R(L)}{2}$ in the practical sense.
                            All area-minimizing cones gained  by applying Lawlor's curvature criterion with F-control or c-control satisfy $\th_0(C(L))<\frac{R(L)}{2}$.
                            Even if there existed some cone $C(L)$ with $\th_0(C(L))=\frac{R(L)}{2}$ in the theoretical sense,
                            one cannot detect it through Lawlor's curvature criterion as the solution $h_0(t)$ which touches the $t$-axis at $t=\tan \th_0$
                            has  an unstable behavior around $t=0$ (in the stage of second order) and simply becomes unobservable using Lawlor's vanishing angle table or any other numerical methods.
                            
                            As a result, each Lawlor cone must have some solution $h$ to \eqref{tineq} with equality everywhere which touches the $t$-axis at some  $t=\tan \th_3>\tan \th_0$ as in the following graph,
                            and moreover, $\th_3< \frac{R(L)}{2}$ as well.
                              \begin{figure}[ht]
                                    \begin{center}
                                    \includegraphics[scale=0.6]{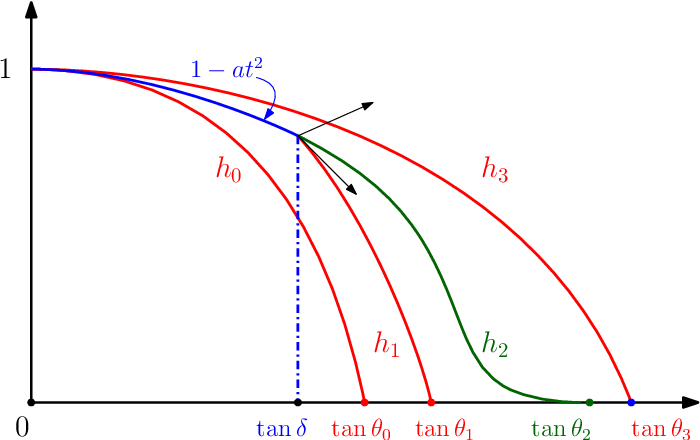}
                                    \end{center}
                            \end{figure}
             \begin{pfT1}                                                                  
                                                      For  point $(t, y)$ with $0<y<\sqrt{t^2+1}p(t)$, 
                                                      the inequality \eqref{tineq}
                                                       leads to
                                                                \begin{equation}\label{interval}
                                                                                          \dfrac{k}{t^2+1}\left(ty-\sqrt{(t^2+1)p^2(t)-y^2}\right)
                                                                                                         \leq h'(t) 
                                                                                                                    \leq \dfrac{k}{t^2+1}\left(ty+\sqrt{(t^2+1)p^2(t)-y^2}\right).
                                                                 \end{equation}
Since 
  $
  \dfrac{\partial}{\partial y}\left(ty-\sqrt{(t^2+1)p^2(t)-y^2}\right)=t+\dfrac{y}{\sqrt{(t^2+1)p^2(t)-y^2}}>0,
  $
%
 in order to reach zero fastest, the solution $h_0$ must satisfy
                     \begin{equation}\label{ceq}
                          h'(t)=\dfrac{k}{t^2+1}\left(th-\sqrt{(t^2+1)p^2(t)-h^2}\right),
                          \end{equation}
      with $h(0)=1.$
                             At the stage of second order,
                             $h_0(t)=1-a_{max} t^2+\cdots$
                             and
                             $h_3(t)=1-a_{min} t^2+\cdots$
                         where    
                             $a_{max,min}=\frac{k}{4}\left(k-2\pm\sqrt{(k-2)^2+8p_2}\right)$.
                             Hence, as in \cite{z}
                             we can
                             choose 
                                $a_{min}<a<a_{max}$
                               and
                               sufficiently small $\delta$ 
                               such that for, all $t\in(0,{\tan\delta}]$,                       
                      $1-at^2$ 
                      satisfies \eqref{tineq} which lies between $h_0$ and $h_3$.
                      Moreover it can further extend to $h_1$ by \eqref{ceq}.
                      
                      Note that
                      $h_0$ is strictly deceasing,
                      that is to say $h(t)<p(t)$ in the interval $(0,\tan \th_0)$.
              So, one can arrange $a$ and $\delta$ to make  $h_1$ strictly decreasing as well.
              Now one gets  $h_1(t)<p(t)$ and around the graph of $h_1$ in the figure
              the inequality \eqref{interval} tells that the slope  $h'(t)$ of a solution to the inequality of \eqref{tineq} is allowed to vary from a strictly negative number to a strictly positive number as illustrated in the figure.
             Then one can deform $h_1$ to $h_2$ which smoothly joins 
                                   $1-at^2$
                                   and the $t$-axis with its graph touching the axis at $\tan \th_2$  as close as one wishes to the point $\tan \th_1$,
                                   and moreover satisfies the calibration inequality \eqref{tineq}.
                                   The reason is 
                                   that in any small interval $[ \delta , \epsilon +\delta)$ one can extend $1-at^2$ smoothly and make the extension  fulfill 
                                   \eqref{interval} in $(\delta, \epsilon + \delta)$
                                   and
                                   \eqref{ceq} near $t=\epsilon +\delta$.
                                   This allows us to further extend it  by  \eqref{ceq}
                                   and it will hit the $t$-axis at a point $\tan \hat \th$ close to $\tan \th_1$.
                                   Similarly as what has been done around $t=\delta$,
                                   one can deform 
                                   this solution of inequality \eqref{tineq}
                                   around $\tan \hat \th$ to make it smoothly tangent to the $t$-axis at $\tan \th_2$ 
                                       such that  the resulting $h_2$ also satisfies \eqref{tineq} and $\tan \th_2$  can be as close to $\tan \hat \th$ as one wants.
                                   
                                   As a result, the corresponding calibration $\phi=d(h\psi)$ is smooth in $C(L)\sim o$ and also in the boundary of its $\th_2$-angular neighborhood $U(\th_2)$ with vanishing $d(h\psi)$ outside $U(\th_2)$.
                                   Thus, we finish the proof.
             \end{pfT1}

             \begin{rem}
             We would like to point out that one can also consider smooth extensions as follows.
              $$\begin{minipage}[c]{0.55\textwidth}
                              \includegraphics[scale=0.5]{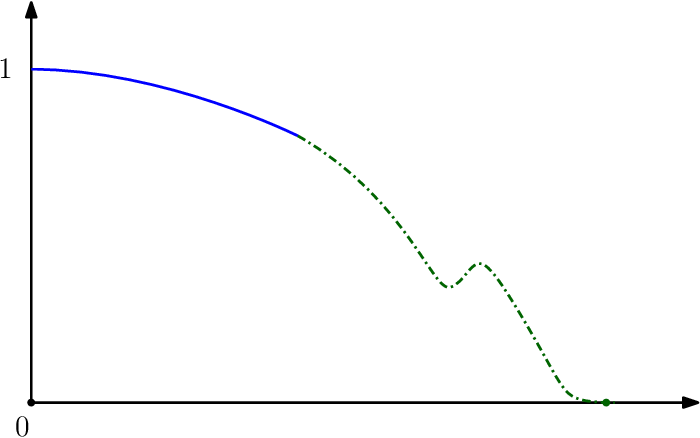}
                              \end{minipage}%
                          \begin{minipage}[c]{0.55\textwidth}
                           \includegraphics[scale=0.51]{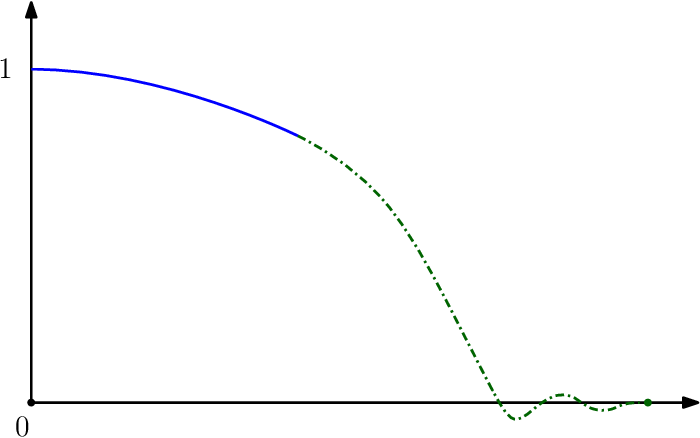}
                           \end{minipage}$$
              They  cause no problem for gaining smooth calibrations away from $o$ for the conclusion in Theorem \ref{T1} but 
              the latter cannot induce a  global area-nonincreasing projection to $C(L)$ faithfully corresponding to the figure while the former can.
             \end{rem}
             
{\ }

                           \subsection{A shortcut for the realization result in \cite{z}. }\label{43}
                           How to get  some closed homologically area-minimizing singular hypersurface  in some Riemannian manifold
                           remained a long-term standing question
                                      until 
                                      \cite{NS}
                                      where
                                      N. Smale found first family of such examples utilizing many tools from geometric measure theory and geometric analysis.
                      
                           Based on \cite{Z2} where the author showed every homogeneous area-minimizing hypercone supports coflat calibrations singular only at the origin, 
                           Theorem \ref{conecal} (Theorem 4.6 in \cite{Z12}) can create examples of closed homologically area-minimizing singular hypersurfaces  in some Riemannian manifolds.

                           In \cite{z} 
                           we study more general realization question:
                           \begin{quote}
{\it Can any area-minimizing cone be realized as a tangent cone at a point
of some homologically area-minimizing  {\em compact} singular submanifold?}
\end{quote}
And we answered affirmatively for every oriented Lawlor cone $C(L)$.
                           
                           The ideas of \cite{z} include a reduction argument of the question to local by Allard's monotonicity,
                           some modification on Lawlor's dual non-continuous calibration for a calibration which is smooth in $L(C)\sim 0$,
                           and a series of mollifications on Lipschitz potential of the non-continuous calibration by virtue of  Nash's embedding theorem. 
                           Suitable combinations among them also play a role for other related questions.
                        
                        Let us recall that,
                      as in \cite{NS},  we consider in \cite{z}  
                          $$\Sigma_C\triangleq (C\times \mathbb R)\cap \mathbb S^{N+1}$$ in $ \mathbb R^{N+1}\times \mathbb R$.
                         Let $M$ be an embedded oriented  connected compact $m$-dimensional submanifold in
                         some $(N+1)$-dimensional oriented compact manifold $T$ with $[M]\neq [0]\in H_m(T;\mathbb Z)$.
                         Within smooth balls around a point of $M$ and a regular point of $\Sigma_C$ respectively
                         one can connect $T$ and $\mathbb S^{N+1}$, $M$ and $\Sigma_C$ simultaneously through one connected sum.
                         Denote by $X$ and $S$ the resulting manifold and submanifold (singular at two points $p_1$ and $p_2$).
                         Then $[S]\neq [0]\in H_m(X;\mathbb Z)$.

                       Now we would like to point out here that Theorem \ref{T1} and Theorem \ref{conecal} 
                       (for $C$ being a Lawlor cone and $S$ having exactly two singular points $p_1, p_2$,
                       the argument in \S  \ref{spf} is still valid since $\Phi$ and $\check g$ in $\Gamma_2$ are uniquely determined by the initial $g$ cf. Remark \ref{rGamma}
                       so things match together well)
                        immediately open a shortcut to conclude that every oriented Lawlor cone $C(L)$ can be realized to the realization question.
                           
                           {\ }
                           
 \section{Minimal product and a proof of Theorem \ref{T2}}\label{S5}
                              
                              We will give a proof of Theorem \ref{T2} in this section.
                               Actually, the result holds for continuous calibration as well.
                               
                               To begin let us briefly recall the minimal product construction.

                               \subsection{Minimal product. }
                               Given minimal submanifolds $\{L_i\}$ in $\mathbb S^{N_i}$ respectively for $i=1,\cdots, n$,
                               a nice algorithm to generate new minimal submanifold  in sphere is the following
                         \begin{equation}\label{prod}
         L_1\times\cdots
         \times L_n
         \triangleq
          \left(\lambda_1 L_1, \cdots, \lambda_n  L_n\right)\subset \mathbb S^{N_1+\cdots +N_n+n-1}.
       \end{equation}
         where $\lambda_i=\sqrt{\frac{k_i}{k}}$ with $k_i$ the dimension of $L_i$ and $k=\sum_{i=1}^n k_i>0$.
              
              This structure may be already known by some experts for quite long period.
              Special situations with all inputs being full spheres or all being R-spaces have been studied, e.g. see \cite{Law, OS} and references therein.
              A simple proof of the structure for general minimal submanifolds through Takahashi Theorem can be found in the book \cite{X}
       \footnote{This reference came into the author's notice in  2020, thanks to the presentation copy (new edition)  from Professor Xin  to the author in 2019.}.
              In 2016 (arXiv chronology), the structure was rediscovered independently by \cite{TZ} and \cite{CH} using different methods.
              Readers may choose either familiar way to understand it
             and the complete information of second fundamental form can be found in \cite{TZ}.

                               \subsection{Proof of Theorem \ref{T2}}
                                                              A preparation for the proof is the following wonderful result by Jim Simons (also see an equivalent version in \cite{de2}).

                               Let $M$ be a minimal hypersurface of $\mathbb S^n$.
                               Choose an orientation and denote the unit normal vector  at $x\in M$ by $N(x)$.
                             The parallel replacement of $N(x)$ at the origin is called $x^*$.
                               Use $M^*$ to stand for the image of $M$ under the mapping $x\longmapsto x^*$.

                               \begin{thm}[Version in \cite{JS}]\label{JS}
                               Suppose that $M$ is a closed minimal hypersurface in $\mathbb S^n$.
                               Then either $M^*$ is a single point, in which case $M=\mathbb S^{n-1}$,
                               or $M^*$ lies in no open hemisphere of $\mathbb S^n$.
                               \end{thm}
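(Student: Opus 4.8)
The plan is to argue by contradiction, converting the hypothesis that $M^*$ lies in an open hemisphere into a rigid global statement about the minimal cone over $M$. First I would record the basic computation for the Gauss map $G(x)=x^*=N(x)$: viewing $N$ as an $\mathbb R^{n+1}$-valued map on $M$, the relations $|N|\equiv1$ and $\langle N,x\rangle\equiv0$ force $dN_x(X)\in T_xM$ for $X\in T_xM$, and in fact $dN_x=-S_x$, the shape operator of $M\subset\mathbb S^n$. Hence $M^*$ is a single point $\iff S\equiv0\iff M$ is totally geodesic $\iff M=\mathbb S^{n-1}$ (a great subsphere, with $N$ the constant normal); this also settles the cases $n\le2$ trivially. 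So from now on assume $M$ is connected, $n\ge3$, $S\not\equiv0$, and --- towards a contradiction --- that there is $a\in\mathbb S^n$ with $\langle N(x),a\rangle>0$ for all $x\in M$; by compactness $\langle N(x),a\rangle\ge\epsilon$ for some $\epsilon>0$.

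Next I would pass to the cone $C=C(M)=\mathbb R_{>0}\cdot M$, a smooth minimal hypersurface of $\mathbb R^{n+1}\setminus\{0\}$ with unit normal $\nu(rx)=N(x)$, so $\langle\nu,a\rangle\ge\epsilon$ on all of $C$. Writing $\mathbb R^{n+1}=a^\perp\oplus\mathbb R a$ and letting $\pi$ be the orthogonal projection onto $a^\perp$: since $a\notin T_pC$, the map $\pi|_C$ is a local diffeomorphism $C\to a^\perp\setminus\{0\}$. I would then upgrade this to a global diffeomorphism. Modding out by dilations, $\pi|_C$ descends to a local diffeomorphism $M\to\mathbb S^{n-1}\subset a^\perp$ (well defined since $x\ne\pm a$ on $M$, otherwise $\langle N(x),a\rangle=\langle N(x),\pm x\rangle=0$); because $M$ is compact this is a covering map, and $\mathbb S^{n-1}$ is simply connected for $n\ge3$, so it is a diffeomorphism. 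Hence $\pi|_C\colon C\to a^\perp\setminus\{0\}$ is a diffeomorphism, i.e. $C=\operatorname{graph}(u)$ for some $u\colon a^\perp\setminus\{0\}\to\mathbb R$ that is smooth, positively $1$-homogeneous, solves the minimal surface equation, and has $|\nabla u|\le\sqrt{\epsilon^{-2}-1}$.

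Finally I would exploit the uniform gradient bound: setting $u(0)=0$ extends $u$ to a Lipschitz function on $a^\perp\cong\mathbb R^n$, and cutting off test functions near $0$ shows $u$ is a weak solution of the minimal surface equation across the point $0$ (removable since $n\ge2$); as the equation is uniformly elliptic for Lipschitz solutions, De Giorgi--Nash--Moser together with Schauder estimates give $u\in C^\infty(\mathbb R^n)$. Thus $C=\operatorname{graph}(u)$ is a smooth hypersurface through the origin; but a smooth cone equals its tangent plane at the vertex, so $C$ is a hyperplane, $M=C\cap\mathbb S^n$ is a great sphere and $N$ is constant --- contradicting $S\not\equiv0$. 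Therefore no such $a$ exists and $M^*$ lies in no open hemisphere.

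I expect the main obstacle to be the upgrade from local to global in the middle step: the immersion statement for $\pi|_C$ is immediate, but the compactness/covering-space argument (and the use of $\pi_1(\mathbb S^{n-1})=0$) is what actually produces the graph representation, which is the crux. Once $C$ is realized as a homogeneous minimal graph with bounded gradient, forcing it to be a plane is routine; alternatively one could invoke Moser's theorem that an entire minimal graph of bounded gradient is affine, in place of the removable-singularity argument.
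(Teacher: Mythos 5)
The paper cites this result from Simons \cite{JS} without reproducing a proof, so the comparison is against the classical argument rather than an internal one. Your proof is correct, but it takes a genuinely different, and considerably heavier, route. The standard proof is a short integral identity: viewing the unit normal $N$ of a closed minimal hypersurface $M^{n-1}\subset\mathbb S^n$ as an $\mathbb R^{n+1}$-valued map, the Codazzi equations together with minimality ($\mathrm{tr}\,A=0$, $\mathrm{div}\,A=0$) give $\Delta_M N=-|A|^2 N$, hence $\Delta_M\langle N,a\rangle=-|A|^2\langle N,a\rangle$ for each fixed $a\in\mathbb R^{n+1}$; integrating over the closed manifold $M$ yields $\int_M|A|^2\langle N,a\rangle\,dA=0$, and if $M^*$ lay in the open hemisphere about $a$ then $\langle N,a\rangle>0$ would force $|A|\equiv 0$, i.e.\ $M$ a great sphere. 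You replace this four-line computation by a Bernstein-type argument: you translate the hemisphere hypothesis into a uniform gradient bound on the cone $C(M)$, realize $C(M)$ globally as a Lipschitz graph over $a^\perp$ via a covering-space argument (the step that genuinely needs $\pi_1(\mathbb S^{n-1})=0$, hence $n\ge 3$), remove the isolated singularity at the origin, and then invoke De Giorgi--Nash--Moser and Schauder regularity plus the rigidity of smooth cones to force $C(M)$ to be a hyperplane. Both routes prove the theorem; yours is longer and machinery-heavy, but it makes explicit the link to the Bernstein problem, which is presumably why the paper also cites De Giorgi \cite{de2} as ``an equivalent version.'' Two minor points: the covering-space step tacitly requires $M$ connected, which should be flagged as a harmless reduction (the paper's application already passes to one component of $L_1$); and the closing aside that Moser's entire-graph theorem could \emph{replace} the removable-singularity step is not quite accurate, since Moser's theorem needs $u$ defined on all of $\mathbb R^n$, so the puncture must be removed first in either version of the endgame.
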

                               
                               Now we begin our proof of Theorem \ref{T2}.
                               
                               \begin{pfT2}
                               Assume that  each $L_i\subset \mathbb S^{N_i}$ is a closed minimal submanifold and $L_1$ of codimension one in $\mathbb S^{N_1}$.
                               Suppose that $\phi$ is a continuous calibration defined in  the entire space $(\R^{N+1}, g_E)$
                               which calibrates  the cone $C(L_1\times\cdots\times L_n)\subset \R^{N+1}$ over minimal product  $L_1\times\cdots\times L_n$.
                               
                               The first observation is that the restriction $\phi_o$ of $\phi$ at the origin $o$ calibrates $C(L_1\times\cdots\times L_n)$.
                               This is due to  the continuity of calibration $\phi$ and that the tangent cone of a cone is itself.
                               And one can apply argument by contradiction to get the observation rigorously.
                               
                               Based on the observation, we know that $C(L_1\times\cdots\times L_n)$ must be calibrated by $\phi_o$
                               which is a calibration with constant coefficients.
                               Fix 
                               $x_i\in L_i$ 
                               and 
                               let $\xi_i$  be    the oriented unit tangent $k_i$-vector of $T_{x_{i}}L_{i}$    for $i=2, \cdots, n$.
                               Then we shall focus on 
                             the calibration $(k_1+1)$-form 
                             $\Phi_o\triangleq \left(\xi_2\wedge \cdots \wedge \xi_n\right) \lrcorner \phi_o$ 
                             with constant coefficients.
 Denote $\mathcal P^*(\Phi_o)$ to be the pull-back of $\Phi_o$ to the linear subspace $\R^{N_1+1}$ via the inclusion $\mathcal P: \R^{N_1+1}\hookrightarrow \R^{N+1}$ by $\mathcal P(x)=(x, 0, \cdots, 0)$.
                         Note that for $x\in L_1$ we have   the oriented unit tangent $k_1$-vector $\xi=\xi(x)$ of $T_{x}L_{1}$ and $\xi \wedge x$ is the volume form of $C(L_1)\subset \R^{N_1+1}$ (reverse the orientation if need be, here and in the sequel).

                         Without loss of generality, assume that $L_1$ is connected (or restrict ourselves to one connected component of $L_1$).
                         We aim to show that  evaluation of $\mathcal P^*(\Phi_o)$ with $\xi \wedge x$  is a common nonzero constant for all $x\in L_1$.
                         First note that evaluation of $\mathcal P^*(\Phi_o)$ with $\xi \wedge x$ has the same evaluation value of $\Phi_o$ with $\xi \wedge x$.
                          (Here the second $\xi \wedge x$ means the $(k_1+1)$-vector $\mathcal P_\#(\xi \wedge x)$ in $\R^{N+1}$.)
                         Moreover, 
                          $\Phi_o$ (or $-\Phi_o$) calibrates all the $(k_1+1)$-vectors $\{\xi \wedge p\}$ (short for $\{(\mathcal P_\#\xi) \wedge p\}$)
                          where $p=p(x)=(\la_1 x, \la_2 x_2,\cdots, \la_n x_n)$ with $x$ running in $L_1$.
                          So, for each $x\in L_1 $,
                           we can extend an orthonormal basis $ \{v_1,\cdots, v_{k_1} \}$ of span $\xi$ to an orthonormal basis 
                          \begin{equation}\label{basis}
                          \Big\{v_1,\cdots, v_{k_1},  \, v_{k_1+1},\, \ v_{k_1+2}, \,\cdots, v_N\Big\}
                           \end{equation}
                           of $\R^{N+1}$
                          with $v_{k_1+1}=p$ and 
                           $v_{k_1+2}=p^\perp=p^\perp(x)$  a unit orthogonal vector to $p$ in the plane span$\{p, x\}$.
                           Now by applying Lemma \ref{L2}
                           we have the decomposition 
                            \begin{equation}\label{deC}
                                 \Phi_o= \pm v_1^*\wedge \cdots \wedge v_{k_1+1}^*+ \sum a_Iv_I^*,
                                  \end{equation}
                                  where $a_I=0$ whenever $i_{k_1}\leq k_1+1$. 
                                  For nonzero $a_I$, the corresponding multi-index
                                  $I$ must have at least two indices  strictly larger than $k_1+1$.
                               Note that among elements of \eqref{basis} only $v_{k_1+1}$ and $v_{k_1+2}$ have nonzero evaluations with $x$
                               but the coefficient $a_I$ vanishes for $I=1\, 2\, \cdots\, k_1\, \widehat{\, (k_1+1)\, }\, (k_1+2)$.
                            %
                                  %
                                       As a result, 
                                       it follows by \eqref{deC} that
                       \begin{eqnarray}
                       \left(\mathcal P^*(\Phi_o)\right)(\xi\wedge x)
                       &
                       =
                       &
                                                                  \Phi_o(\xi\wedge x)
                                                                  \nonumber
                                                                  \\
                                            &
                                            =
                                            &
                                            \left(\pm v_1^*\wedge \cdots \wedge v_{k_1+1}^*\right)(\xi\wedge x)
                                            \nonumber
                                            \\
                                            &
                                            =
                                            &
                                            \pm\la_1
                                 \label{evaC}         
                          \end{eqnarray}
                          for all $\xi(x)\wedge x$ where $x\in L_1$.
                          
                            By the assumption, we know that  $k_1+1=N_1$.
                                   So  $\mathcal P^*(\Phi_o)$  is a simple form in $\R^{N_1+1}$.
                          In particular,
                          by \eqref{evaC} and the connectedness of $L_1$ (or its connected component), 
                          the fixed unit normal dual vector to $\mathcal P^*(\Phi_o)$ has  constant angle $\arccos \la_1$ or $\pi-\arccos \la_1$ with respect to the unit normal of $\xi\wedge x$ for all $x\in L_1$.
                          Namely, $L_1^*$  is contained in an open hemisphere of $\mathbb S^{N_1}$ but this cannot happen according to Theorem \ref{JS}.
                          
             Therefore, we finish the proof by contradiction argument,
             with the conclusion that
             whenever someone among $\{L_i\}$ is a minimal hypersurface in sphere,  
                                   the cone $C(L_1\times\cdots\times L_n)$ cannot be calibrated by any globally defined continuous calibration form.
                               \end{pfT2}
                               
      \begin{rem}
      There are many examples of area-minimizing cones of high codimension which can support calibration with constant coefficients, e.g. see the coassociative cone and special Lagrangian cones in \cite{HL}.
      The spiral minimal product algorithm in \cite{L-Z}  can  produce a plethora of new examples of special Lagrangian cones based on given ones.
      
     However, it is unclear to the author at this moment whether one can find a cone over some  minimal product (of inputs of positive dimensions)  which can support  continuous calibrations. 
     If there were no such kind of examples at all, then it indicates that the minimal product structure can automatically produce cones which support no continuous calibrations.
      \end{rem}

                              {\ }
                              
                               \section{Detect duality obstruction in higher codimension}\label{S6}
                                    Before discussion on the situation of higher codimension, 
                                    let us review main steps for creating Example 3 in \cite{Z12}
                                    which essentially displays the duality obstruction of calibration in the smooth category for codimension one.
                                    
                                    \subsection{Review on the duality obstruction in codimension one. }   \label{61} 
                                        Roughly speaking, there are three steps.
                                        The first is to construct closed singular  submanifold $(S,o)$   of nonzero  real homology class  
                                          in some manifold $X$ with tangent cone $C$ at $o$ being a homogeneous area-minimizing hypercone (say the Simons cone $C_{3,3}$).
                                        Then by \cite{Z2} and Theorem \ref{conecal}        
                                        we can get a coflat calibration  $\hat \Phi$ singular only at $o$ with respect to some smooth metric $\hat g$ on $X$ 
                                        such that $S\sim o$ is calibrated by $\hat \Phi$.
                                        
                                        The second step is to fix a closed smooth submanifold $M$ in $[S]$ which has empty intersection with $S$
                                        and conformally change metric $\hat g$  to $h$ in sufficiently small neighborhood of $M$
                                        such that $M$ and $S$ are the only two mass-minimizing representatives in the class $[S]$ with respect to $h$.
                                        This can be made rigorously by calibration arguments, for details see \cite{Z12}.
                                        
                                        The last step is to observe that, under $h$, although $M$ is homologically mass-minimizing,
                                        it cannot be calibrated by any smooth/continuous calibration form.
                                        Since otherwise the singular hypersurface $S$ must be calibrated 
                                        according to the fundamental theorem of calibrated geometry, Theorem \ref{hl}.
                                        As a result, the non-flat tangent cone of $S$ at $o$ then must be calibrated by a calibration with constant coefficients in $(T_o X, h_o)$
                                        and this is impossible.

                                        From the steps in the above, the duality obstruction of calibration in the smooth category is finally due to codimension one.
                                        
                               {\ }
                               
                               \subsection{Generalization to higher codimension} \label{62}
                               Let us mention that the first two steps in the above can be done without difficulty for higher codimension.
                               
                               The reason is that one can apply the construction in \S \ref{43}
                               with $T^{N+1}=S^1_1\times \cdots \times S^1_N$
                               and the submanifold  being the torus $T^m=S^1_1\times \cdots \times S^1_m\times \{q_{m+1}\}\times \cdots \times \{q_{N+1}\}$
                               where $q_j\in S^1_j$ for $m<j\leq N+1$.
                               Hence, for any $m$-dimensional cone $C\subset \R^{N+1}$ which can be calibrated by some coflat calibration singular only at the origin,
                               the connect sum of $T^{N+1}$ and $\mathbb S^{N+1}$ around regular points of $T^m$ and $\Sigma_C$
                               gives us $X$ and $S$ in the first step.
                                
                                As already explained in \S \ref{43}, Theorem \ref{conecal} can be applied to the resulting $S$ with two singular points.
                                Let $(\hat \Phi, \hat g)$ be the calibration pair on $X$ and $\hat \Phi$ calibrates $S$.
                                Further, one can choose $M$ to be $S^1_1\times \cdots \times S^1_m\times \{q'_{m+1}\}\times \cdots \times \{q'_{N+1}\}$ where some $q'_j\neq q_j$.
                              Then by exactly the same reason
                              one can conformally change $\hat g$ to $h$ such that $M$ and $S$ are the only two mass-minimizing in the class $[S]$.
                              
                              Again, by the fundamental theorem of calibrated geometry, if $M$ is calibrated by some smooth/continuous calibration form,
                              then the tangent cone $C$ of $S$ at  singular point $p_1$ (or $p_2$) has to be calibrated by a calibration with constant coefficients.

                            {\ }
                            
                            Now we are ready to state what kind of local model $C\subset \R^{N+1}$ can lead to duality obstruction in higher codimension in the last step.
                            Actually there are quite many.
                            The following is a recent general configuration result for mass-minimizing cones in Euclidean spaces.
                            
                             \begin{thm}[Theorem 1.1 of \cite{Z25}]\label{main}
                                Given embedded closed minimal submanifolds $\{L_i\}$ respectively in Euclidean spheres with $i=1,2,\cdots, n$,
                                every cone over the minimal product $L$ of sufficiently many copies among these $\{L_i\}$ is area-minimizing.
                                \end{thm}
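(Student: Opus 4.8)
The plan is to deduce area-minimality of $C(L)$ from Lawlor's curvature criterion, Theorem \ref{LC}. For a minimal product $L=L_{i_1}\times\cdots\times L_{i_s}$ assembled from the finite list $\{L_1,\dots,L_n\}$ (repetitions allowed) with $s\ge2$, the cone $C(L)$ is regular since $L$, being a nontrivial product inside a sphere, is not totally geodesic; it therefore suffices to show that the vanishing angle obeys $\theta_0(C(L))\le\tfrac12 R(L)$ once $s$ is large. Set $k_j=\dim L_{i_j}$, $k=\sum_j k_j$ and $\lambda_j=\sqrt{k_j/k}$ as in \eqref{prod}; since each $k_j\ge1$, ``sufficiently many copies'' forces $k\to\infty$, and this is the only largeness we shall use. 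Both sides of the desired inequality will be estimated through two invariants of $L$ inside its ambient sphere $\mathbb S^{N}$, $N=N_{i_1}+\cdots+N_{i_s}+s-1$: the curvature bound $\alpha=\sup_{x\in L,\,v_x\in N_xL}\|\textbf{h}^{v_x}\|$ appearing in \eqref{control}, and the normal radius $R(L)$.

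\emph{Step 1: $\alpha^2=O(k)$.} Using the explicit second fundamental form of a minimal product from \cite{TZ}, at a point of $L$ the unit normal sphere decomposes into \emph{internal} directions, each normal to a single factor $\lambda_j L_{i_j}$ within its scaled sphere $\mathbb S^{N_{i_j}}_{\lambda_j}$, and \emph{spherical} directions arising from the $(\lambda_1,\dots,\lambda_s)$-sphere that records the radial splitting. On an internal direction the shape operator is that of $L_{i_j}\subset\mathbb S^{N_{i_j}}$ dilated by $1/\lambda_j=\sqrt{k/k_j}$, hence of norm $O(\sqrt k)$ with constant drawn from the finite list; on a spherical direction it is block diagonal with each block of norm $O(\sqrt k)$ (the extreme case being a direction concentrated on one factor). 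Summing the blocks, $\|\textbf{h}^{v_x}\|^2=O(k)$ uniformly in $x$, in $v_x$, and in the multiplicity vector; in particular $\alpha^2\ll(k-2)^2$ for $k$ large.

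\emph{Step 2: $R(L)\ge c\,k^{-1/2}$ and the vanishing angle is $O(k^{-1})$.} A focal-point computation along the splitting of Step 1 shows that a geodesic leaving $L$ in an internal direction of $L_{i_j}$ first hits a focal point at arclength comparable to $\lambda_j=\sqrt{k_j/k}$ times the focal radius of $L_{i_j}$ in $\mathbb S^{N_{i_j}}$, while spherical directions focus no sooner; hence $R(L)\ge c\,k^{-1/2}$ with $c=c(\{L_i\})>0$. For the vanishing angle, expand $p(t)=1+p_2t^2+\cdots$ as in \S\ref{41}; minimality of each $L_i$ (hence of $L$) makes $\operatorname{tr}\textbf{h}^{v_x}=0$, so $p_2=-\tfrac12\sup_{x,v_x}\|\textbf{h}^{v_x}\|^2=O(k)$ is negative, and Step 1 keeps $|p_2|\le(k-2)^2/8$ for $k$ large — and likewise after replacing $p(t)$ by either control in \eqref{control}. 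By the local analysis of \S\ref{42}, the extremal solution of \eqref{ceq} is $h_0(t)=1-a_{\max}t^2+\cdots$ with $a_{\max}=\tfrac k4\bigl(k-2+\sqrt{(k-2)^2+8p_2}\bigr)=\tfrac{k(k-2)}{2}+O(k)$, of order $k^2$; feeding this into \eqref{interval} (equivalently, after the rescaling $\tau=kt$ the limiting profile of \eqref{ceq} is $h_0(\tau)=\cos\tau$, hitting zero at $\tau=\pi/2$) shows $h_0$ meets the $t$-axis at $t=\tan\theta_0=O(k^{-1})$, so $\theta_0=O(k^{-1})$, and the same bound controls $\theta_F$ and $\theta_c$. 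Comparing with $R(L)/2\ge\tfrac12 c\,k^{-1/2}$ gives $\theta_c(C(L))<\tfrac12 R(L)$ as soon as $k$ exceeds a threshold depending only on $\{L_i\}$, i.e. once the product uses sufficiently many factors; Theorem \ref{LC} then makes $C(L)$ area-minimizing.

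The main obstacle is the \emph{uniform} control of $\alpha$ (equivalently $p_2$) over all multiplicity vectors: each internal block individually blows up like $\sqrt{k/k_j}$, so one must verify that these, together with the $O(\sqrt k)$ spherical blocks, accumulate only to $O(k)$ in the relevant norm — and, crucially, that no admissible normal direction concentrates so much curvature that $(k-2)^2+8p_2$ becomes negative, which would wreck the second-order behaviour $h_0(t)=1-a_{\max}t^2+\cdots$ and hence the very existence of a vanishing angle. A secondary point is making the focal-radius bound $R(L)\ge c\,k^{-1/2}$ genuinely uniform in the multiplicities. Both are precisely where the quantitative structure of minimal products in \cite{TZ} has to be pushed through.
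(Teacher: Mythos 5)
The paper states this theorem as a citation from \cite{Z25} and gives no proof of its own, but it does record immediately after Corollary \ref{addcor} that both configuration results are ``proved by Lawlor's criterion,'' which is exactly the route you take. Your three estimates --- $\alpha^2=O(k)$ from block-diagonality of the minimal-product second fundamental form (internal blocks of size $\alpha_{i_j}^2k/k_{i_j}$, a toroidal part contributing exactly $k$ as for generalized Clifford tori), $R(L)\gtrsim k^{-1/2}$ from the rescaled focal radii, and $\theta_0\sim\pi/(2k)$ from the limiting profile $dh/d\tau=-\sqrt{1-h^2}$ after $\tau=kt$ (valid since $p_2/k^2=O(1/k)\to0$) --- are of the right orders, so that $\theta_0<\tfrac12R(L)$ closes for large $k$ with constants drawn from the fixed finite list $\{L_i\}$, and the two points you flag (uniformity of the curvature bound over multiplicity vectors and of the focal-radius bound) are precisely what must be extracted from the explicit second-fundamental-form formulas in \cite{TZ}.
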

                            
                            Another neat configuration result is the following. 
                               \begin{thm}[Theorem 1.3  of \cite{Z25}]\label{addcor}
                               Let $L$ be an embedded closed  minimal subamanifold in $\mathbb S^n$.
                              Then the cone over minimal product $L\times \mathbb S^{d}$ in $\R ^{n+d+2}$ is area-minimizing when $d$ is sufficiently large.
                               \end{thm}

                               Both results are proved by Lawlor's criterion.
                               So our Theorem \ref{T1} here asserts the existence of  coflat calibration forms singular only at the origin for each Lawlor cone in Theorem \ref{main} and Theorem \ref{addcor}. 
                               
                               If a Lawlor cone $C( L_1^{\times s_1} \times \cdots \times L_n^{\times s_n})$ from Theorem \ref{main}
                               has some input, say $L_1$, a minimal hypersurface in $\mathbb S^{N_1}$ with $s_1>0$,
                               then by our Theorem \ref{T2}
                               we know that it cannot be calibrated by any smooth/continuous calibration form.
                               
                               Let $L$ be an arbitrary  closed minimal submanifold in $\mathbb S^{N}$
                               then $C(L\times \mathbb S^{2025}\times \mathbb S^d)$
                               is a Lawlor cone when $d$ is sufficiently large.
                               Note that the minimal product $\mathbb S^{2025}\times \mathbb S^d$ is a minimal hypersurface in $\mathbb S^{2026+d}$.
                               So by  our Theorem \ref{T2} viewing $L\times \mathbb S^{2025}\times \mathbb S^d=L\times \left(\mathbb S^{2025}\times \mathbb S^d\right)$
                               we know that $C(L\times \mathbb S^{2025}\times \mathbb S^d)$ cannot be calibrated by any smooth/continuous calibration form.

                               Therefore, based on the such Lawlor cones as local models we can follow the above three steps to detect   the duality obstruction of calibrations in the smooth category for the situation of higher codimension. 
                              In summary, 
                               key points 
                               are 1. existence of coflat calibration singular only at the origin in local model by Theorem \ref{T1} in the second step
                               and 2.  the contradiction by Theorem \ref{T2} in the last step.
                               
                          {\ }
                             
                             \subsection{Another way to generate duality obstruction in high codimension beyond $\bf{ (\ast)}$.}
                             Recall that
                             in \cite{Z12}
                             we remark after Example 2 there that certain Cartesian cross-product
                             can give us realizations to the realization question in \S \ref{43} for models with more complicated singular set. 
                             
                             To be more precise, we quote Remark 4.12 in \cite{Z12}:
                             \begin{quote}
                             By cross-products examples with more complicated singularity can be generated.
For instance, 
$S\times S$ with singularity $S\vee S$ is calibrated by a coflat calibration
with singular set 
$
S\vee S
$ 
in the cartesian product $(X, g)\times (X, g)$.
                             \end{quote}
                             This in no way means that the Cartesian product of any two calibrations is again a calibration.
                             In fact a recent result of \cite{Zust25} shows the opposite and thus gives a negative answer to a long-standing Federer question (of comass version).
                             The reason for the conclusion in the quotation is that our singular $(S,o)$ in Example 2 is of codimension one
                                       and
                                       the coflat calibration has to be a simple form of co-degree one away from the singularity. 
                                       As a result, in this situation the product of calibrations does produce a coflat calibration with singular set being $X\vee_{(o,o)} X$.
                                       Since $(S,o)\times (S,o)$ with  singular set $S\vee_{(o,o)}  S$ possesses dimension larger than that of $X\vee_{(o,o)} X$,
                                       we see that $(S,o)\times (S,o)$ is calibrated by the resulting coflat calibration 
                                       and
                                       the coflat version of fundamental theorem of calibrated geometry now applies.

                             Therefore, take smooth $M$ in [S] with $M\cap S=\emptyset$ as in Example 3 of \cite{Z12}.
                             As briefly explained in \S \ref{61} we can conformally change $\hat g$ to $h$ such that
                             $M$ and $S$ are the only two minimizers in $[S]$ with respect to $h$.
                             Actually, what we performed in \cite{Z12} is to make both $M$ and $S$ be calibrated by some coflat calibration $\tilde \Phi$ with singular set being $\{o\}$. 
                             Hence, following the preceding paragraph here we can see that
                             $$
                             M\times M,\, \ \ \ M\times S,\, \ \  \ S\times M,\, \ \  \ S\times S
                             $$ 
                             are calibrated by the coflat calibration $\tilde \Phi\wedge \tilde \Phi$ in $(X, h)\times (X,h)$.
                             Thus $M\times M$ is homologically mass-minimizing
                             but cannot support any smooth calibration form $\varphi$ globally defined on $(X, h)\times (X,h)$.
                             Otherwise,  by the fundamental theorem of calibrated geometry $\varphi$ also calibrates $M\times S$ and similarly it leads to a contradiction with the codimension one of $S$ as follows.
                             Since $\varphi_{(p,o)}$ for $p\in M$ calibrates the tangent cone $C$ at $(p,o) $ $-$ a hyperplane $\times$ hypercone Simons cone $C_{3,3}$,
                             let $\xi$ be the oriented unit top vector of the hyperplane 
                             and we get the calibration $\xi \lrcorner \varphi_{(p,o)}$ with constant coefficients calibrating $C_{3,3}$.
                             Note that as $C$ splits according to $\mathbb R^8\oplus \mathbb R^8$
                             and the factor $C_{3,3}$ completely sits inside the second $\R^8$.
                             So we can observe that the restriction of $\xi \lrcorner \varphi_{(p,o)}$ to  $\{0\}\times \R^8$,
                             again a calibration with constant coefficients but now in $\R^8$,
                              calibrates the hypercone $C_{3,3}$.
                             \footnote{
                             If we use $\mathcal P$ to stand for the inclusion 
                             $\{0\}\times \R^8\subset \R^8\times \R^8$,
                             then the restriction gives exactly $\mathcal P^*(\xi \lrcorner \varphi_{(p,o)})$.}
                             This is a contradiction.
                             
                             Apparently, by the above arguments  one can keep doing the Cartesian product among more copies of $(X,h)$ for duality obstruction in higher codimension.

                                  {\ }
                                  
                                  Next, we aim to drop the codimension requirement  in the above construction for the duality obstruction.
                                  For achieving this, we have to further modify the metrics.
                                  \begin{lem}\label{Mcontrol}
                                                               Suppose that
                                 $(X_1^{N_1+1}, h_1)$ and $(X_2^{N_2+1}, h_2)$ are two examples from \S \ref{62}.
                                  Let $M_i^{m_i}, S_i$ be calibrated by coflat calibration $\tilde \Phi_i$ with two singular points $p_{1,i}$ and $p_{2, i}$ (of $S_i$) in  $(X_i, h_i)$ for $i=1,2$.
                                  Then we can further conformally change $h_i$ to $\tilde h_i$
                                  such that 
                                  $\tilde \Phi_i$ is still a coflat calibration
                                  and that
                                  $\tilde \Phi_1\wedge \tilde \Phi_2$ is a smooth calibration away from
                                  $$
                                    {\mathcal S}\triangleq \big(X_1\vee_{(p_{1,1}, p_{1,2})} X_2\big)
                                    \cup
                                     \big(X_1\vee_{(p_{1,1}, p_{2,2})} X_2\big)
                                     \cup
                                      \big(X_1\vee_{(p_{2,1}, p_{1,2})} X_2\big)
                                      \cup 
                                       \big(X_1\vee_{(p_{2,1}, p_{2,2})} X_2\big)
$$
                        in $(X_1, \tilde h_1)\times (X_2, \tilde h_2)$.
                        Here  $X_1\vee_{(p_{i,1}, p_{j,2})} X_2$ means $\big(X_1\times \{p_{j,2}\}\big)\cup\big(\{p_{i,1}\}\times X_2\big)$.
                                 \end{lem}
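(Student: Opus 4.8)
The plan is to work with the pulled-back product form $\Psi:=\pi_1^*\tilde\Phi_1\wedge\pi_2^*\tilde\Phi_2$ on $X_1\times X_2$, where $\pi_i$ denotes projection to $X_i$. Since $d\tilde\Phi_i=0$ off $\{p_{1,i},p_{2,i}\}$, the form $\Psi$ is closed, and as $\tilde\Phi_i$ is smooth exactly on $X_i\setminus\{p_{1,i},p_{2,i}\}$ one sees that $\Psi$ is smooth precisely on $(X_1\setminus\{p_{1,1},p_{2,1}\})\times(X_2\setminus\{p_{1,2},p_{2,2}\})$, which is $(X_1\times X_2)\setminus\mathcal S$. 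Writing the sought metrics as conformal rescalings $\tilde h_i=u_i h_i$ with $u_i\ge 1$, one automatically gets $\|\tilde\Phi_i\|^*_{\tilde h_i}=u_i^{-m_i/2}\|\tilde\Phi_i\|^*_{h_i}\le 1$, so each $\tilde\Phi_i$ stays a coflat calibration with unchanged singular set; moreover, if $u_i\equiv 1$ near $M_i\cup S_i$, then $\tilde\Phi_i$ still calibrates $M_i$ and $S_i$ there, which is what will be needed in \S\ref{62} to make $\Psi$ calibrate $M_1\times M_2$ and the other three products. Everything therefore reduces to choosing such $u_i$ with $\|\Psi\|^*_{\tilde h_1\oplus\tilde h_2}\le 1$ pointwise off $\mathcal S$.

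The bookkeeping is organized by observing that, at each point, the metric $\tilde h_1\oplus\tilde h_2=u_1h_1\oplus u_2h_2$ with $u_i\in[1,K]$ is a convex combination of the four ``corner'' product metrics $h_1\oplus h_2$, $Kh_1\oplus h_2$, $h_1\oplus Kh_2$, $Kh_1\oplus Kh_2$ (use the independent coupling of the marginals $u_i=1+(K-1)\alpha_i$); so by Theorem \ref{nice} it is enough to bound $\|\Psi\|^*$ by $1$ for each corner. For the three corners carrying a factor $K$ on at least one slot, the elementary submultiplicativity $\|\alpha\wedge\beta\|^*\le\sqrt{\binom{N_1+1}{m_1}\binom{N_2+1}{m_2}}\,\|\alpha\|^*\|\beta\|^*$ (comass $\le$ the Euclidean norm, which is multiplicative for wedge under an orthogonal direct sum, and Euclidean norm $\le$ that binomial factor times comass), together with $\|\tilde\Phi_i\|^*_{Kh_i}=K^{-m_i/2}\|\tilde\Phi_i\|^*_{h_i}\le K^{-m_i/2}$, forces $\|\Psi\|^*<1$ once $K$ is large. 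The only substantive corner is $h_1\oplus h_2$, which by the rule ``$u_i=1$ only near $M_i\cup S_i$'' needs to be controlled merely on a fixed neighborhood of $(M_1\cup S_1)\times(M_2\cup S_2)$.

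There the key step is to propagate the canonical decomposition of Lemma \ref{L2} through the wedge. At such a point $(x_1,x_2)$ each $\tilde\Phi_i|_{x_i}$ is a calibration with constant coefficients calibrating a unit simple $m_i$-vector $\xi_i$, so Lemma \ref{L2} gives an $h_i$-orthonormal splitting $T_{x_i}X_i=V_i\oplus W_i$ with $V_i=\mathrm{span}\,\xi_i$ and $\tilde\Phi_i|_{x_i}=\xi_i^\flat+\rho_i$, where every monomial of $\rho_i$ carries at least two legs in $W_i$. Wedging, $\Psi|_{(x_1,x_2)}=(\xi_1\wedge\xi_2)^\flat+\xi_1^\flat\wedge\rho_2+\rho_1\wedge\xi_2^\flat+\rho_1\wedge\rho_2$, and each of the last three summands carries at least two legs in $W:=W_1\oplus W_2$; thus $\Psi$ has exactly the shape in Lemma \ref{hl1} relative to the calibrated plane $V_1\oplus V_2$ and its complement $W$. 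Consequently, provided $h_i$ enlarges the directions $W_i$ by a sufficiently large constant, Lemma \ref{hl2} (equivalently the strict-convexity estimate of Theorem \ref{nice}) yields $\|\Psi\|^*_{h_1\oplus h_2}\le 1$, with equality precisely along $\xi_1\wedge\xi_2$, i.e.\ exactly on $M_1\times M_2$, $M_1\times S_2$, $S_1\times M_2$, $S_1\times S_2$.

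The hard part — and the reason the metrics must be ``further modified'' rather than merely glued — is to guarantee that $h_i$ carries this fibre-direction enlargement on a \emph{full} neighborhood of $M_i\cup S_i$, including punctured neighborhoods of the singular points $p_{j,i}$. Near $M_i$ (indeed on the regions $\Gamma$ in the proof of Theorem \ref{conecal}) the enlargement is already present, with a constant we may take arbitrarily large, cf.\ Remark \ref{rGamma}; but around each $p_{j,i}$ the local model of $h_i$ is the flat Euclidean cone metric of the Lawlor cone underlying $S_i$ (Remark \ref{locF}), which carries no such enlargement, and the argument above fails there outright. I would remedy this by inserting, in a punctured ball about each $p_{j,i}$, an anisotropic enlargement of the fibre distribution of the cone foliation; by Lemma \ref{hl2} applied pointwise to $\tilde\Phi_i$ and the leaf through the point, this keeps $\tilde\Phi_i$ a coflat calibration, and it matches continuously onto the already-enlarged metric nearer $M_i$. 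The delicate points to verify are that this modification extends across $p_{j,i}$ (and hence across $\mathcal S$) as a legitimate, at worst conical, metric, and that the enlargement constants on $X_1$ and on $X_2$ can be chosen compatibly large so that $W_1\oplus W_2$ is enlarged uniformly; granting this, the corner-by-corner estimate of the previous two paragraphs closes, and cutting $u_i$ back down to $1$ near $M_i\cup S_i$ (again controlled by Theorem \ref{nice} along the interpolating metrics) produces $\tilde h_1,\tilde h_2$ with all the asserted properties.
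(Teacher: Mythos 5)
Your approach is genuinely different from the paper's and has a gap at exactly the point where the paper invokes a structural fact you do not use. The paper observes that, by the construction in \S\ref{62} together with Theorem \ref{T1} and the gluing \eqref{phiglue}, the form $\tilde\Phi_i$ can be arranged to be \emph{simple} (decomposable) on a tube $U_{3\epsilon_i/5}(M_i)\cup U_{3\epsilon_i/5}(S_i)$ minus the singular points: the Lawlor calibration $\varphi=d(h\psi)$ is simple, and the potential-gluing preserves simpleness. Since the comass of a wedge is multiplicative when one factor is simple, $\tilde\Phi_1\wedge\tilde\Phi_2$ is automatically a calibration on $(U_1\times X_2)\cup(X_1\times U_2)$ with respect to \emph{any} product metric, no metric modification being needed there. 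Only the complementary region $U_1^c\times U_2^c$ (which stays away from $M_i\cup S_i$) requires an argument, and there the paper simply conformally squashes the comass of each $\tilde\Phi_i$ below $C_i=1/(m_1+m_2)!$ and uses the crude bound $\|\tilde\Phi_1\wedge\tilde\Phi_2\|^*\le\binom{m_1+m_2}{m_1}C_1C_2$ coming from expanding a simple $(m_1+m_2)$-vector in the basis $\{e_i\}\cup\{f_j\}$.

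Your substitute for the simpleness argument is to push the Lemma \ref{L2} decomposition through the wedge near $(M_1\cup S_1)\times(M_2\cup S_2)$. That does produce the right shape $\Psi=(\xi_1\wedge\xi_2)^\flat+\rho$ with each monomial of $\rho$ carrying at least two legs in $W=W_1\oplus W_2$, but passing from this shape to comass $\le 1$ via Lemma \ref{hl2} requires the metric to stretch the $W$-directions by a sufficiently large constant, and the given $h_i$ do not a priori do so on a full neighborhood of $M_i\cup S_i$ --- in particular not on punctured balls around the singular points $p_{j,i}$, where $h_i$ is the Euclidean cone model from Remark \ref{locF}. You recognize this and sketch a remedy (inserting an anisotropic fibre enlargement near each $p_{j,i}$, extending it across the singular point, and choosing the enlargement constants on $X_1$ and $X_2$ compatibly), but you explicitly leave its feasibility unverified; this is precisely the missing step. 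Either carry out that construction in full, or --- more efficiently --- invoke the simpleness of $\tilde\Phi_i$ near $M_i\cup S_i$ as the paper does, which makes the wedge estimate automatic there and removes the need to touch the metric anywhere near $M_i\cup S_i$ or the singular points. Your four-corners convexity bookkeeping via Theorem \ref{nice} is a nice alternative to the paper's direct comass estimate on $U_1^c\times U_2^c$, but note Theorem \ref{nice} is stated for two metrics and should be iterated to cover a convex combination of four.
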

                                   \begin{proof}
                                   Recall that we modify $\Phi_i$ to $\tilde \Phi_i$ in a small $\epsilon_i$-neighborhood $U_{\epsilon_i}(M_i)$ of $M_i$ in \cite{Z12}
                                   such that $\tilde \Phi_i$ is a simple form in $U_{3\epsilon_i/5}(M_i)$.
                                   So we can multiply  a conformal factor function being one in $U_{\epsilon_i/5}(M_i)\cup U_{\epsilon_i/5}(S_i)$ 
                                   and being a sufficiently large constant in $U_i^c\triangleq U^c_{2\epsilon_i/5}(M_i)\cap U^c_{2\epsilon_i/5}(S_i)$
                                   such that under the new metric $\tilde h_i$ we have the following pointwise
                                   $$
                                   \|\tilde \Phi_i\|^*_{\tilde h_i}\leq 
                                   C_i<1
                                   \, \, \, \,
                                   \text{ in }
                                   \,
                                   U_i^c
                                   $$
                                                                      for any wished $0<C_i<1$ (to be determined later) where $i=1,2$.
                                                                      
                                                                      Note that one can choose $\epsilon_i$ small enough for $\tilde \Phi_i$ to be simple in $U_{3\epsilon_i/5}(S_i)\sim \{p_{1,i}, p_{2,i}\}$ as well.
                                                                      The reason is that our local model is based on Theorem \ref{T1}
                                                                      which actually provides coflat calibration singular only at $p_{1,i}$ and $p_{2, i}$
                                                                      in some neighborhood $\Xi_i$ of $S_i$.
                                                                      In fact,  as  in \S \ref{spf} our local calibration $\varphi$ for a Lawlor cone is simple near  $p_{1,i}$ and $p_{2, i}$
                                                                      and the 
                                                                      gluing results in a simple $\Phi_i$ in $\Xi_i$ away from $p_{1,i}$ and $p_{2, i}$.
                                                                      Therefore, it is
                                                             easy to see that
                                                             $\tilde \Phi_1\wedge \tilde \Phi_2$
                                                             is a calibration in 
                                                             $                                    {\mathcal S}^c\cap \Big(\big(U_1\times X_2\big)\cup \big(X_1\times U_2\big)\Big)$
                                                             by the simpleness 
                                                             of $\tilde \Phi_i$ in 
                                                             $U_{3\epsilon_i/5}(M_i)\cup U_{3\epsilon_i/5}(S_i)$.
                                                            
                                                            So we only need to focus on the situation on $U^c_1\times U^c_2$.
                                                            Fix $q_i\in U^c_i$.
                                                            Let $Q$ be a unit $(m_1+m_2)$-simple vector of $\La^{m_1+m_2}T_{(q_1,q_2)}X_1\times X_2$.
                                                            Then immediately we have
                                                                  \begin{equation}\label{Qd}
                                                                  Q=v_1\wedge \cdots \wedge v_{m_1+m_2}=(a_1 e_1+b_1 f_1)\wedge \cdots \wedge (a_{m_1+m_2} e_{m_1+m_2}+b_{m_1+m_2} f_{m_1+m_2}).
                                                                  \end{equation}
                                   where $\{v_i\}$ is an orthonormal basis of span$Q$ and $v_i=a_i e_i+ b_i f_i$
                                  with  unit vectors $e_i\in T_{q_1}X_1$, $f_i\in T_{q_2}X_2$ and $a_i^2+b_i^2=1$.
                                  Therefore, in the full expansion of \eqref{Qd}
                                  there are at most
                                  $$
                                   \left( \begin{matrix} m_1+m_2\\ m_1\end{matrix} \right)-\text{many terms of form } 
                                   \pm a_I b_J e_I\wedge f_J
                                  $$
                                   where multi-indices $I=i_1\, i_2\, \cdots\, i_{m_1}$,
                                   $J=j_1\, j_2\, \cdots\, j_{m_2}$,
                                   coefficients $a_I=a_{i_1}
                                            \cdots a_{i_{m_1}}$,
                                   $b_J=b_{j_1}
                                              \cdots b_{j_{m_2}}$
                                   and $e_I\wedge f_J=e_{i_1}\wedge \cdots \wedge e_{i_{m_1}}\wedge f_{j_1}\wedge \cdots \wedge f_{j_{m_2}}$.

                                  As $|a_Ib_J|\leq 1$,
                                  it follows that
                                  the evaluation of $\tilde \Phi_1\wedge \tilde \Phi_2$ on $Q$
                                  is no larger than 
                                  $\left( \begin{smallmatrix} m_1+m_2\\ m_1\end{smallmatrix} \right)C_1C_2$.
                                  Now let us continue the argument in the first paragraph.
                                  If we choose $C_1=C_2=\frac{1}{(m_1+m_2)!}$,
                                  then  $\tilde \Phi_1\wedge \tilde \Phi_2$
                                  becomes a smooth calibration in 
                                   $U^c_1\times U^c_2$
                                   with respect to $\tilde h_1\oplus \tilde h_2$.
                                   Hence, together with the discussion in the second paragraph we finish the proof.
                                   \end{proof}

                                 If the degree of $\tilde \Phi_1\wedge \tilde \Phi_2$ is larger than the dimension of $\mathcal S$,
                                   then $\tilde \Phi_1\wedge \tilde \Phi_2$ in the above lemma is a coflat calibration in $(X_1, \tilde h_1)\times (X_2, \tilde h_2)$ 
                                   which 
                                   calibrates 
                    \begin{equation}\label{C4}
                             M_1\times M_2,\, \ \ \ M_1\times S_2,\, \ \  \ S_1\times M_2,\, \ \  \ S_1\times S_2.
                  \end{equation}
                                 Consequently, under $\tilde h_1\oplus \tilde h_2$,
                                 if 
                                 $ M_1\times M_2$ supports a globally defined smooth calibration on $X_1\times X_2$,
                                 then by the fundamental theorem of calibrated geometry it also calibrates $M_1\times S_2$.
                                 An argument similar to that before Lemma \ref{Mcontrol} 
                                 will lead to a contradiction with our starting local models by Theorem \ref{T2}.
                                 Hence, we encounter the duality obstruction.

                                   What can we say if the degree of $\tilde \Phi_1\wedge \tilde \Phi_2$ is less than the dimension of $\mathcal S$?
                                   Although  $\tilde \Phi_1\wedge \tilde \Phi_2$ is not a coflat calibration due to the large singular set,
                                   it can still serve as a calibration by combining the thoughts in \cite{Law} and \cite{z}.
                                   
                                   \begin{lem}\label{l64}
                                   With respect to the resulting metric $\tilde h_1\oplus \tilde h_2$ in Lemma \ref{Mcontrol},
                                   the currents in \eqref{C4} are homologically mass-minimzing in the same class.
                                   %
                                   \end{lem}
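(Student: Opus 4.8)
The plan is to promote $\tilde\Phi_1\wedge\tilde\Phi_2$ to a genuine closed $L^\infty$ calibration of degree $m_1+m_2$ on all of $(X_1,\tilde h_1)\times(X_2,\tilde h_2)$ and then to run the fundamental theorem of calibrated geometry against it. This is precisely where the ideas of \cite{Law} and \cite{z} enter: Theorem \ref{T1} (as used in \S\ref{43}) lets us take each $\tilde\Phi_i$ to be a coflat calibration whose singular set is the \emph{finite} set $\{p_{1,i},p_{2,i}\}$, so that $\tilde\Phi_i$ is already a closed $L^\infty$ $m_i$-form on $X_i$ calibrating both $M_i$ and $S_i$ with respect to $\tilde h_i$ (whence in particular $\mathbf{M}(M_i)=\mathbf{M}(S_i)$ by Theorem \ref{hl} in $X_i$). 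With this in hand, the minimality of the four pieces in \eqref{C4} will follow essentially as in Theorem \ref{hl}.

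First I would record that, inside $X_1\times X_2$ (of dimension $n=N_1+N_2+2$), the set
$$\mathcal S=\big(X_1\times\{p_{1,2},p_{2,2}\}\big)\ \cup\ \big(\{p_{1,1},p_{2,1}\}\times X_2\big)$$
has dimension $\max\{N_1+1,\,N_2+1\}$, hence codimension $\min\{N_1+1,\,N_2+1\}\ge 2$, so $\mathcal H^{n-1}(\mathcal S)=0$. By Lemma \ref{Mcontrol}, $\tilde\Phi_1\wedge\tilde\Phi_2$ is smooth, closed, and of comass at most one on $(X_1\times X_2)\setminus\mathcal S$, and it is bounded there. Since a set of $\mathcal H^{n-1}$-measure zero is removable for the exterior derivative of a bounded form — equivalently, mollifying each $\tilde\Phi_i$ separately on its own factor produces smooth closed forms whose wedge products $\omega_\epsilon$ are smooth, closed, of comass $\le 1+o(1)$, and converge to $\tilde\Phi_1\wedge\tilde\Phi_2$ in $L^1_{\mathrm{loc}}$ — the form $\tilde\Phi_1\wedge\tilde\Phi_2$ extends to a closed $L^\infty$ $(m_1+m_2)$-form on $X_1\times X_2$ of essential comass $\le 1$, representing a class in $H^{m_1+m_2}(X_1\times X_2;\R)$. (Note that this argument makes no demand on the size of $\mathcal S$ relative to $m_1+m_2$, so it also covers the case left open by the coflat fundamental theorem.)

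Next I would verify the calibrating identity and the homology. Each of $M_1\times M_2,\,M_1\times S_2,\,S_1\times M_2,\,S_1\times S_2$ meets $\mathcal S$ in a set of dimension at most $\max\{m_1,m_2\}<m_1+m_2$ (because $M_i$ avoids $p_{1,i},p_{2,i}$), hence in a null set for its mass measure; on the complement the tangent plane is a product $\vec T_1\wedge\vec T_2$ and $(\tilde\Phi_1\wedge\tilde\Phi_2)(\vec T_1\wedge\vec T_2)=\tilde\Phi_1(\vec T_1)\,\tilde\Phi_2(\vec T_2)=1$, so each current in \eqref{C4} pairs with $\tilde\Phi_1\wedge\tilde\Phi_2$ to give exactly its own mass. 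By the Künneth formula the four currents are homologous, since $M_i-S_i=\partial R_i$ in $X_i$ gives, for instance, $M_1\times M_2-S_1\times M_2=\partial(R_1\times M_2)$. Now for any compactly supported $T'$ of finite mass homologous to, say, $M_1\times M_2$, the smooth closed forms $\omega_\epsilon$ above converge to $\tilde\Phi_1\wedge\tilde\Phi_2$ uniformly on the compact set $\mathrm{spt}(M_1\times M_2)\subset(X_1\times X_2)\setminus\mathcal S$, so $T'(\omega_\epsilon)=(M_1\times M_2)(\omega_\epsilon)$ by Stokes, with right side $\to\mathbf{M}(M_1\times M_2)$ and left side $\le(1+o(1))\,\mathbf{M}(T')$; letting $\epsilon\to0$ gives $\mathbf{M}(M_1\times M_2)\le\mathbf{M}(T')$. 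The same applies to the other three currents, so all of them are homologically mass-minimizing in their common class, and being homologous minimizers they share the same mass (which is also $\mathbf{M}(M_1)\,\mathbf{M}(M_2)=\mathbf{M}(S_1)\,\mathbf{M}(S_2)$, the metric being a product).

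The main obstacle is the claim that $\tilde\Phi_1\wedge\tilde\Phi_2$ remains \emph{closed} across the large set $\mathcal S$, i.e.\ that no spurious boundary is created along the slices $X_i\times\{p\}$; here it is essential that Theorem \ref{T1} has already shrunk the singular sets of the factor calibrations to \emph{finite} sets, which forces $\mathcal S$ to have codimension at least two so that the removable-singularity/mollification argument goes through. The remaining ingredients — the comass bound of Lemma \ref{Mcontrol}, the a.e.\ calibrating identity on \eqref{C4}, and the passage to the limit in the competitor inequality — are routine.
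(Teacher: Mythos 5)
Your proof proposal is essentially correct and follows the same high-level plan as the paper: produce a sequence of smooth closed forms $\varphi_1^\epsilon\wedge\varphi_2^\epsilon$ whose comass tends to one, use Stokes to show all four currents in \eqref{C4} pair equally with each member of the sequence, then pass to the limit to conclude mass-minimality. The Künneth/Stokes step, the calibrating identity a.e., and the final competitor inequality are handled the same way in both proofs.

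Where you differ from the paper is the technical justification for why the approximating forms may be taken closed. You invoke a removable-singularity theorem: $\mathcal S$ has codimension $\ge 2$ in $X_1\times X_2$, hence $\mathcal H^{n-1}(\mathcal S)=0$, so the bounded form $\tilde\Phi_1\wedge\tilde\Phi_2$ extends to a distributionally closed $L^\infty$ form globally, and mollification then produces closed approximants. The paper instead works factor by factor and uses the finer structural fact that $\tilde\Phi_i$ near its (finitely many) singular points is exact with a smooth \emph{bounded} potential $\omega_i$ coming from the Lawlor form $d(h_2\psi)$: it Nash-embeds $(X_i,\tilde h_i)$, pulls back $\omega_i$ via nearest-point projection to an $L^1_{\mathrm{loc}}$ form on a tubular neighborhood in $\R^{\ell_i}$, mollifies \emph{the potential}, and takes $d$ — which makes closedness of $\varphi_i^\epsilon$ automatic from $d\circ d=0$, with no appeal to removability of the larger set $\mathcal S$. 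Your route is shorter in idea but relies on a nontrivial removability lemma that you do not prove, and you leave unspecified how to mollify a form on a closed Riemannian manifold while preserving closedness — that is precisely what the Nash-embedding step in the paper is for. It would strengthen your write-up to either cite a removable-singularity result (Harvey–Polking type) together with a careful mollification scheme, or to adopt the paper's device of mollifying the bounded potential after isometric embedding.
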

                                   
                                   \begin{proof}
                                   It is clear that those in \eqref{C4} are in the same homological class by our construction.
                                   Let us explain why they are homologically mass-minimizing.
                                   By Nash's embedding theorem \cite{Nash},
                                   we have
                                   isometric embedding
                                    $F_i: (X_i, \tilde h_i)\longrightarrow (\R^{\ell_i}, g_E)$
                                    and then one can pull back $\tilde \Phi_i$
                                    to a form $\bar \Phi_i=\pi_i^*(\tilde \Phi_i)$ in a small neighborhood of $F_i(X_i)$,
                                    where $\pi_i$ is the nearest projection to $F_i(X_i)$.
                                    Now we can perform mollifications to get a family of smooth forms $\big(\bar \Phi_i\big)_\epsilon$
                                    in a small neighborhood  of $F_i(X_i)$
                                    and restrict them back to get smooth closed forms $\varphi_i^\epsilon$ in $F_i(X_i)$.
                                   
                                    Mollification through singular set can be done 
                                    due to the following.
                                    By \S \ref{42} and \S \ref{spf}
                                    the resulting $\tilde \Phi_i=d\omega_i$
                                    around $o=p_{i,1}$ (or $p_{i,2}$)
                                    for 
                                    $\omega_i=h_2\psi_i$ (smooth and of bounded derivatives away from $o$ in local).
                                    Therefore, 
                                    \footnote{Cf. the proof of Corollary \ref{minCartProd} for more details.}
                                      \begin{equation}\label{dclosed}
\big(\bar \Phi_i\big)_\epsilon=                      
  \big(\pi_i^*d\omega_i\big)_\epsilon
=
             d\big(\pi_i^*\omega_i\big)_\epsilon
                                   \text{\ \ \ \ \ \ and \ \ \ \ \ \ }
                                    d\big(\bar \Phi_i\big)_\epsilon=0,
                                    \end{equation}
                              so  not hard to see
                              that                              the restriction $\varphi_i^\epsilon$ of
                                    smooth  $\big(\bar \Phi_i\big)_\epsilon$
                                       is also closed
                                     on the entire
                                     $F_i(X_i)$
                                     (identified as $X_i$ through the embedding).

                                 By \eqref{dclosed} we know that $d\varphi_i^\epsilon=0$
                                 and thus $d
                                 \big(
                                                     \varphi_1^\epsilon     \wedge 
                                                     \varphi_2^\epsilon  
                                                     \big)
                                                     =0$.
                                 So we have the equal integrals for terms in \eqref{C4}
                                  \begin{equation}\label{I4}
                            \int_{ M_1\times M_2}    \varphi_1^\epsilon \wedge \varphi_2^\epsilon
                            =
                             \int_{ M_1\times S_2}    \varphi_1^\epsilon \wedge \varphi_2^\epsilon
                             =
                               \int_{ S_1\times M_2}    \varphi_1^\epsilon \wedge \varphi_2^\epsilon
                               =
                                 \int_{ S_1\times S_2}    \varphi_1^\epsilon \wedge \varphi_2^\epsilon
                  \end{equation}
                               for all small $\epsilon>0$.
                    
                    According to the comass control method in the proof of Lemma \ref{Mcontrol},
                    we know that 
                    the comass $\|\varphi^\epsilon_1\wedge \varphi^\epsilon_2\|^*_{\tilde h_1\oplus \tilde h_2}<\frac{1}{(m_1+m_2)!}$ in $U^c_1\times U^c_2$
                    when $\epsilon$ is small
                    and the sequence $\{\varphi^\epsilon_1\wedge \varphi^\epsilon_2\}_{\epsilon\downarrow 0}$ 
                    serves as a calibration in the sense that, for any compactly supported competitor $T$ of normal current in $[M_1\times M_2]$,
     \begin{equation}\label{EM4}
                  \mathrm{\mathbf{M}}(M_1\times M_2)
                  =
                      \lim_{\epsilon\downarrow 0}\int_{ M_1\times M_2}    \varphi_1^\epsilon \wedge \varphi_2^\epsilon
                            =
                                     \lim_{\epsilon\downarrow 0}\int_{ T}    \varphi_1^\epsilon \wedge \varphi_2^\epsilon
                                     \leq 
                                     \mathrm{\mathbf{M}}(T)
               \end{equation}
                    as 
                    $$
                     \lim_{\epsilon\downarrow 0}
                     \max_{X_1\times X_2}\|\varphi^\epsilon_1\wedge \varphi^\epsilon_2\|^*_{\tilde h_1\oplus \tilde h_2}=1.
                     $$
                    Hence, we finish the proof.
                                   \end{proof}
                                  
                                  A  corollary of the above is  the minimality of the Cartesian product of two Lawlor cones.
                                  Here we provide a direct proof  in the Euclidean space.
                                  \begin{cor}\label{minCartProd}
                                  Let  $C_1\subset \R^{N_1+1}$ and $C_2\subset \R^{N_2+1}$ be two Lawlor cones.
                                  Then 
                                 the cone $C_1\times C_2\subset \R^{N_1+1}\oplus \R^{N_2+1}$ (not a regular cone) is also mass minimizing. 
                                  \end{cor}
                                  
                                  \begin{proof}
                                  Let $d\omega_i$ be the coflat calibration constructed  in \S \ref{42} for $C_i$.
                                  Note that  $\omega_i$ is singular (not defined) only at the origin, and smooth  away from the origin.
                                  Take $\sigma$ to be a mollifier, which is a smooth function supported in the unit ball of $\R^n$ centered at the origin with total integral one.
                                  Set $\sigma_\epsilon(z)=\frac{1}{\epsilon^n}\sigma(\epsilon z)$.
                                  Then mollification 
    \[
                                  f_\epsilon(x)=\int_{\R^n}f(y)\sigma_\epsilon (y-x) dy
\]
                                  is well defined for almost everywhere defined real-valued, locally bounded, measurable function $f$. 
                                  By mollifying $\omega_i$ we mean the mollification on its coefficient functions $f_{i, J}$ 
                                  (with respect to corresponding basis,
                                       i.e., 
                                       $\{dx_{j_1}\wedge \cdots \wedge dx_{j_{m_i}}\}$).
                                One nice property is that $f_\epsilon$ is smooth.
                                Another is 
                                        \begin{equation}\label{mollification}
                                      {\p_j} (f_\epsilon)
                                        =
                                        \int_{\R^n}f(y)
                                                  {\p_j} \big(\sigma_\epsilon (y-x) \big)dy
                                        =
                                          \int_{\R^n}
                                          (\p_jf)(y)
                                                 \sigma_\epsilon (y-x)  dy
                                                  =
                                        ({\p_j}f)_\epsilon
                                              \end{equation}
                                         whenever $f$ is a  smooth function.
                                         Actually 
                                         $
                                            {\p_j} (f_\epsilon)=
                                                  ({\p_j}f)_\epsilon
                                         $
                                         holds 
                                for Lipschitz $f$.
                                
                                Note that each $f_{i,J}$ is bounded, smooth away from the origin and may not continuous at the origin.
                                But this does not cause problem for 
                                 ${\p_j} (f_{i,J})_\epsilon=({\p_j} f_{i,J})_\epsilon$ 
                                 because 
                                 the integration by parts only needs to break into two pieces along the $x_j$-axis 
                                 and by the boundedness of $f_{i,J}$
                                 the contribution of integral over  the $x_j$-axis  can be ignored
                                 in \eqref{mollification}
                                 as $n>1$.
                                 Hence, each $\phi^\epsilon_i:=(d\omega_i)_\epsilon=d(\omega_i)_\epsilon$ is closed in $\R^{N_i+1}$
                                 and so is their product $\phi^\epsilon_1\wedge \phi^\epsilon_2$ in $\R^{N_1+1}\oplus \R^{N_2+1}$. 
                                 Besides,  easy to see  $\|\phi^\epsilon_i\|^*\leq 1$.
                                 
                                 As 
                                 argued 
                                 in \eqref{EM4}
                                 with
                                   $\lim_{\epsilon\downarrow 0}
                     \max
                     \|
                     \phi^\epsilon_1\wedge \phi^\epsilon_2
                     \|^*=1$,
                                 the family $\phi^\epsilon_1\wedge \phi^\epsilon_2$ 
can serve as a calibration and thus
$C_1\times C_2$ is mass-minimizing in $\R^{N_1+1}\oplus \R^{N_2+1}$.
                                  \end{proof}
                                  
                                  \begin{rem}
                                  An alternative way is to mollify $\phi=\phi_1\wedge \phi_2$ in $\R^{N_1+1}\oplus \R^{N_2+1}$.
                                  By the simpleness of $\phi_1$ and $\phi_2$, 
                                  clearly $\phi$ is a smooth simple calibration away from $\big(\{0\}\times \R^{N_2+1}\big)\cup \big(\R^{N_1+1}\times \{0\}\big)$.
                                 As argued in  the above proof each $\phi_\epsilon$ 
                                 is a smooth calibration with comass no larger than one. 
                                 Since the integral of  $\phi_\epsilon$ on the  truncated part of $C_1\times C_2$ within unit ball centered at the origin
                                 limits to its mass, 
                                 the conclusion of Corollary \ref{minCartProd}
                                 follows.
                                 Moreover,
                                 this result holds for Cartesian products of multiple Lawlor cones.
                                  \end{rem}
                                  
                                             \begin{rem}
                                  The proof of Corollary \ref{minCartProd}
                                  can work with  Lawlor's orginal non-continuous calibrations (away from the origin their potentials are Lipschitz forms).
                                  However, not like the study in \cite{z} where scope can be completely reduced to local,
                                  here in order to establish Lemma \ref{l64} 
                                  we need to suitably extend local calibration to a global one
                                  for our purpose
                                  and smooth calibration away from the singular set is particularly preferred.
                                  \end{rem}

                                  \begin{rem}
                                  Given two mass minimizing currents or cones,
                                  whether their Cartesian product is minimizing is still largely open.
                                  Special cases got verified in \cite{M}.
                                  \end{rem}

                         Now we are able to end up the paper with the following duality obstruction.         
                                  
                                  \begin{thm}
                                  With respect to the resulting metric $\tilde h_1\oplus \tilde h_2$ in Lemma \ref{Mcontrol},
$M_1\times M_2$ in $X_1\times X_2$ is homologically mass-minimizing but cannot support any globally defined smooth calibration.
                                  \end{thm}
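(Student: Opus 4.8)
The plan is to treat the two assertions separately. The homological mass-minimality of $M_1\times M_2$ is already contained in Lemma \ref{l64}, which shows the four currents in \eqref{C4} lie in one homology class of $(X_1\times X_2,\tilde h_1\oplus\tilde h_2)$ and all realize the minimal mass there; in particular $M_1\times M_2$ minimizes mass and $\mathbf{M}(M_1\times M_2)=\mathbf{M}(M_1\times S_2)$, so for this half I would only cite that lemma.

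For the obstruction I would argue by contradiction, reproducing the scheme used earlier in this section for $M\times M$ and the $C_{3,3}$ splitting, but feeding in Theorem \ref{T2} in place of the ad hoc fact about the Simons cone. Suppose $\varphi$ is a globally defined smooth calibration on $(X_1\times X_2,\tilde h_1\oplus\tilde h_2)$ that calibrates $M_1\times M_2$. Since $X_1\times X_2$ is compact, $M_1\times S_2$ is homologous to $M_1\times M_2$, and by Lemma \ref{l64} the two have equal mass, the equality clause of the fundamental theorem of calibrated geometry (Theorem \ref{hl}) forces $\varphi$ to calibrate $M_1\times S_2$ as well. Now fix a regular point $p\in M_1$ and a singular point $p_{1,2}$ of $S_2$; by the construction in \S\ref{62} the tangent cone $C_2$ of $S_2$ at $p_{1,2}$ is a Lawlor cone over a minimal product one of whose factors is a minimal hypersurface of a sphere.

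The next step is to peel off the $M_1$-directions exactly as in the arguments preceding Lemma \ref{Mcontrol}. Because dilations commute with Cartesian products and $M_1$ is smooth at $p$, the tangent cone of the calibrated current $M_1\times S_2$ at $(p,p_{1,2})$ is $T_pM_1\times C_2$, and since $\varphi$ is continuous the usual blow-up argument (as in \S\ref{61}) shows that the constant-coefficient form $\varphi_{(p,p_{1,2})}$ calibrates $T_pM_1\times C_2$ in $T_{(p,p_{1,2})}(X_1\times X_2)$ with its Euclidean product inner product. Let $\xi$ be the oriented unit simple $m_1$-vector spanning $T_pM_1$. Since the product metric makes $T_pX_1$ and $T_{p_{1,2}}X_2$ orthogonal, for every unit simple $m_2$-vector $Q$ in $T_{p_{1,2}}X_2$ the wedge $\xi\wedge Q$ is again simple of norm one, so $\xi\llcorner\varphi_{(p,p_{1,2})}$ has comass at most one on $T_{p_{1,2}}X_2$ and equals one on the oriented tangent vectors of $C_2$. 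Pulling back along $\{0\}\times T_{p_{1,2}}X_2\hookrightarrow T_{(p,p_{1,2})}(X_1\times X_2)$ then produces a calibration with constant coefficients on $T_{p_{1,2}}X_2\cong\R^{N_2+1}$ calibrating $C_2$. But Theorem \ref{T2}, which holds for continuous and hence for constant-coefficient calibrations, asserts $C_2$ supports no such calibration, a contradiction that finishes the proof.

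The part I expect to need genuine care is the tangent-cone identification: one must verify that the blow-up of the calibrated current $M_1\times S_2$ at $(p,p_{1,2})$ is exactly $T_pM_1\times C_2$ — this uses that $M_1$ is a smooth submanifold near $p$, that $S_2$ has the tangent cone $C_2$ arranged in \S\ref{62}, and that dilations commute with the Cartesian product — and that a continuous calibration passes, in the blow-up limit, to a constant-coefficient calibration of the limiting cone. Both are standard and have already been used in this paper; everything downstream, namely the invocation of Theorem \ref{hl}, the contraction by $\xi$, and the Euclidean restriction, is routine.
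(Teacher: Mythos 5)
Your proposal is correct and follows essentially the same route as the paper: cite Lemma \ref{l64} for mass-minimality, use the equality clause of Theorem \ref{hl} to transfer the hypothetical smooth calibration $\varphi$ to $M_1\times S_2$, contract by the oriented tangent $m_1$-vector $\xi$ of $M_1$ at a regular point, restrict the resulting constant-coefficient calibration to $\{0\}\times\R^{N_2+1}$, and contradict Theorem \ref{T2}. The paper states the blow-up/tangent-cone step more tersely, while you spell out why $\xi\wedge Q$ remains unit simple and why the restriction is a calibration; this is sound and adds no new ingredient beyond what the paper invokes.
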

                                   
                                   \begin{proof}
                                   The mass-minimality has been proved in the previous lemma.
                                  Let us assume that $M_1\times M_2$ supports a globally defined smooth calibration $\varpi$.
                                  Then by the fundamental theorem of calibrated geometry and Lemma \ref{l64},
                                  $\varpi$ also calibrates $M_1\times S_2$.
                                  
                                  We can repeat the argument before Lemma \ref{Mcontrol} for contradiction.
                                  Fix $q\in M_1$ and $p=p_{1}$ or $p_2$ one singular point of $S_2$
                                  and set $x=(q, p)$.
                                  Let $\xi$ be the oriented unit top vector of $T_qM_1$.
                                  Then
                                  $\xi\lrcorner \varpi_{x}$
                                  is a calibration form with constant coefficients in $\left(T_x (X_1\times X_2), (\tilde h_1\oplus \tilde h_2)_x\right)$.
                                  Since $\varpi$ calibrates $M_1\times S_2$,
                                  it can be seen that the restriction of $\xi\lrcorner \varpi_{x}$ to $\{0\}\times \R^{N_2+1}$
                                  calibrates the tangent cone of $S_2$ at $p$.
                                  This contradicts with our construction which is based on local model from Theorem \ref{T2}
                                  $-$ the tangent cone of $S_2$  at each singular point cannot support any calibration with constant coefficients.
                                  
                                  Therefore, in $(X_1\times X_2, \tilde h_1\oplus \tilde h_2)$ 
                                  the homologically mass-minimizing smooth submanifold $M_1\times M_2$
                                  supports no globally defined smooth calibrations at all.
                                   \end{proof}

                                   {\ }

\begin{bibdiv}
\begin{biblist}



\bib{A}{article}{
    author={{Almgren, Jr.}, Frederick  J.}
    title={Some interior regularity theorems for minimal surfaces and an extension of Bernstein's theorem},
    journal={Ann. Math.},
    volume={84},
    date={1966},
    pages={277--292},
}


\bib{BdGG}{article}{
    author={Bombieri, Enrico},
     author={De Giorgi, Ennio},
      author={Giusti, E.},
    title={Minimal cones and the Bernstein problem},
    journal={Invent. Math.},
    volume={7},
    date={1969},
    pages={243--268},
}


\bib{Ch}{article}{
    author={Cheng, Benny N.},
    title={Area-minimizing cone-type surfaces and coflat calibrations},
    journal={Indiana Univ. Math. J.},
    volume={37},
    date={1988},
    pages={505--535},
}

 \bib{CH}{article}{
    author={Choe, Jaigyoung},
    author={Hoppe, Jens},
    title={Some minimal submanifolds generalizing the Clifford torus},
    journal={Math. Nach.},
    volume={291},
    date={2018},
    pages={2536--2542},
}


\bib{de2}{article}{
    author={De Giorgi, Ennio},
    title={Una estensione del teorema di Bernstein},
    journal={Ann. Sc. Norm. Sup. Pisa},
    volume={19},
    date={1965},
    pages={79--85},
}


\bib{FF}{article}{
    author={Federer, Herbert},
    author={Fleming, Wendell H.},
    title={Normal and integral currents},
    journal={Ann. Math. },
    volume={72},
    date={1960},
    pages={458--520},
}


\bib{F}{book}{
    author={Federer, Herbert},
    title={Geometric Measure Theory},
    place={Springer-Verlag, New York},
    date={1969},
}

    \bib{FK}{article}{
        author={Ferus, Dirk},
author={Karcher, Hermann},
   title= {Non-rotational minimal spheres and minimizing cones},
journal={Comment. Math. Helv.},
    volume={\bf 60},
    date={1985},
   pages={247--269},
   }
   

    \bib{fle}{article}{
author={Fleming, Wendell H.},
   title= {On the oriented Plateau problem},
journal={Rend. Circolo Mat. Palermo},
    volume={\bf 9},
    date={1962},
   pages={69--89},
   }

\bib{FH}{article}{
   author={Freedman, Michael},
   author={Headrick, Matthew},
   title={Bit threads and holographic entanglement},
   journal={Comm. Math. Phys.},
   volume={352},
   date={2017},
   pages={407--438},
}




\bib{HS}{article}{
    author={Hardt, Robert},
    author={Simon, Leon},
    title={Area minimizing hypersurfaces with isolated singularities},
    journal={J. Reine. Angew. Math.},
    volume={362},
    date={1985},
    pages={102--129},
}

\bib{HL}{article}{
    author={Harvey, F. Reese},
    author={{Lawson, Jr.}, H. Blaine},
    title={Calibrated geometries},
    journal={Acta Math.},
    volume={148},
    date={1982},
    pages={47--157},
}


\bib{HL1}{article}{
    author={Harvey, F. R.},
    author={{Lawson, Jr.}, H. B.},
    title={Calibrated foliations},
    journal={Amer. J. Math.},
    volume={104},
    date={1982},
    pages={607--633},
}



\bib{Law0}{article}{
    author={Lawlor, Gary R.},
    title={The angle criterion},
   journal={Invent. Math.},
   volume={95},
   date={1989},
       pages={437--446},
}


\bib{Law}{book}{
    author={Lawlor, Gary R.},
    title={A Sufficient Criterion for a Cone to be Area-Minimizing},
   place={Mem. of the Amer. Math. Soc.},
   volume={91},
   date={1991},
}


\bib{BL}{article}{
    author={{Lawson, Jr.}, H. Blaine},
    title={The equivariant Plateau problem and interior regularity},
    journal={Trans. Amer. Math. Soc.},
    volume={173},
    date={1972},
    pages={231-249},
}

\bib{L-Z}{article}{
    author={Li, Haizhong},
     author={Zhang, Yongsheng},
    title={Spiral Minimal Products}
    journal={arXiv: 2306.03328},
}

\bib{L1}{article}{
   author={Liu, Zhenhua},
   title={Every finite graph arises as the singular set of a compact 3-D
   calibrated area minimizing surface},
   journal={Comm. Pure Appl. Math.},
   volume={77},
   date={2024},
   pages={3670--3707},
}

\bib{L2}{article}
{
author={Liu, Zhenhua},
  title={
Area-minimizing submanifolds are not generically smooth},
 journal={arXiv:2206.08315},
}

\bib{L3}{article}
{
author={Liu, Zhenhua},
  title={
On a conjecture of Almgren: area-minimizing submanifolds with fractal singular sets},
 journal={arXiv:2110.13137},
}

\bib{M}{article}{
   author={Morgan, Frank},
   title={The exterior algebra $\Lambda^k{\R}^n$ and area minimization},
   journal={Linear Algebra Appl.},
   volume={66},
   date={1985},
   pages={1--28},
  }

\bib{Nash}{article}{
    author={Nash, J.},
    title={The imbedding problem for Riemannian manifolds},
    journal={Ann. of Math.},
    volume={63},
    date={1956},
    pages={20--63},
}


\bib{OS}{article}{
    author={Ohno, Shinji},
    author={Sakai, Takashi},
    title={Area-minimizing cones over minimal embeddings of R-spaces},
    journal={Josai Math. Monogr.},
    volume={13},
    date={2021},
    pages={69--91},
}



\bib{LS}{article}{
    author={Simon, Leon},
    title={Asymptotics for a Class of Non-Linear Evolution Equations, with Applications to
Geometric Problems},
    journal={Ann. of Math.},
    volume={118},
    date={1983},
    pages={525--571},
}

\bib{JS}{article}{
    author={Simons, James},
    title={Minimal varieties in riemannian manifolds},
    journal={Ann. of Math.},
    volume={88},
    date={1968},
    pages={62--105},
}


\bib{NS}{article}{
    author={Smale, Nathan},
    title={Singular homologically area minimizing surfaces of codimension one in Riemannian manifolds},
    journal={Invent. Math.},
    volume={135},
    date={1999},
    pages={145-183},
}

\bib{TZ}{article}{
    author={Tang, Zizhou},
    author={Zhang, Yongsheng},
    title={Minimizing cones associated with isoparametric foliations},
    journal={J. Differential Geom.},
    volume={115},
    date={2020},
    pages={367--393},
} 


\bib{X}{book}{
    author={Xin, Yuanlong},
    title={Minimal submanifolds and related topics},
    place={Nankai Tracts in Mathematics, World Scientific Publishing},
   date={2003 (and Second Edition in 2018)},

}



\bib{Z2}{article}{
  author={Zhang, Yongsheng},
   title={On Lawson's area-minimizing hypercones}
   journal={Acta Math. Sin. (Engl. Ser.)},
    volume={32},
    date={2016},
    pages={1465--1476},
   }

\bib{Z12}{article}{
   author={Zhang, Yongsheng},
   title={On extending calibration pairs}
   journal={Adv. Math.},
    volume={308},
    date={2017},
    pages={645--670},
   }

\bib{z}{article}{
        author={Zhang, Yongsheng},
    title={On realization of tangent cones of homologically area-minimizing compact singular submanifolds},
    journal={J. Diff. Geom.},
    volume={109},
    date={2018},
    pages={177--188},
    }

\bib{Z25}{article}{
       author={Zhang, Yongsheng},
    title={Some configuration results for area-minimizing cones},
     journal={arXiv:2510.17240v5},
    }



\bib{Zust25}{article}{
       author={Z\"ust, Roger},
    title={A calibration in $\R^{16}$ and Federer's product question},
     journal={arXiv:2503.12402},
    }


\end{biblist}
\end{bibdiv}

{\ }

\end{document}